\documentclass{amsart}

\usepackage{Preset}
\usepackage[makeroom]{cancel}
\usepackage{MnSymbol}
\usepackage{nccmath}

\usepackage{mathrsfs}
\usepackage[latin1]{inputenc}
\usepackage[T1]{fontenc}

\usepackage{tikz}
\usepackage[colorlinks=true,linkcolor=blue!50!black,anchorcolor=red,citecolor=blue,filecolor=black,menucolor=black,runcolor=black,urlcolor=black]{hyperref}

\usetikzlibrary{decorations.pathmorphing}
\usetikzlibrary{shapes,arrows,graphs}

\title[Uniform bounds for bubbles]{Uniform bounds for bubbles of neutral quadratic polynomials}
\author{Dzmitry Dudko}  
\email{dzmitry.dudko@stonybrook.edu}  
\author{Willie Rush Lim}
\email{willie\_rush\_lim@brown.edu}
\date{}

\begin{document}

\begin{abstract}
    Given a quadratic polynomial with an irrationally indifferent fixed point of bounded type, a bubble is an iterated preimage of its Siegel disk. 
    Similar to pseudo-Siegel disks, one constructs pseudo-bubbles from bubbles by filling in parabolic fjords. We introduce the near-degenerate regime for the $\alpha'$-Dynamics controlling the geometry of these pseudo-bubbles in deep scale. Consequently, we obtain uniform bounds on the size and regularity of pseudo-bubbles independent of the rotation number. In~\cite{DLL}, jointly with Lyubich, this result serves as a necessary ingredient for establishing uniform butterfly bounds and ultimately the combinatorial rigidity of the full attractor of neutral renormalization. 
   
\end{abstract}

\maketitle

\setcounter{tocdepth}{1}
\tableofcontents

\section{Introduction}
\label{sec:intro}

Consider the Neutral Family
\begin{equation}
    \label{eq:NeutrFamily}
    f_\theta(z) := e^{2\pi i \theta} z + z^2, \qquad \theta \in \R/\Z.
\end{equation}
This is the simplest family of global non-linear holomorphic dynamical systems that admit a neutral fixed point at $0$.
It exhibits some of the most delicate behavior in one-dimensional complex dynamics depending on the arithmetic properties of the rotation number $\theta$. The active study of the Neutral Family emerged in the 1980s; see, for instance~\cite{D87, McM98, Shi98, PZ04, IS, DLS, SY24, Che25} and references therein. The current paper further develops the unified renormalization theory of~\eqref{eq:NeutrFamily} originated in~\cite{DL22}.    

As Figure~\ref{fig:bubble-example} illustrates, for bounded-type $\theta$, the dynamical plane of $f_\theta$ contains the rotating Siegel disk $Z_\theta$ and infinitely many \emph{bubbles}, which are iterated preimages of $\overline Z_\theta$. The renormalization theory of $f_\theta\colon \overline Z_\theta\selfmap$ is developed through the first return maps $f^{\qq_n}_\theta$, where $\pp_n/\qq_n\approx \theta$ are the best rational approximations. In~\cite{DL22}, appropriate uniform precompactness of the family $\{f_\theta^{\qq_n(\theta)}\colon \overline Z_\theta\selfmap\}_{\theta,n }$ was established. In this paper, we extend this uniform geometric control to the bubbles of $f_\theta$. The control of bubbles is a necessary ingredient for comprehensive development of the neutral renormalization theory, including rigidity and hyperbolicity; see~\S\ref{ss:applications}. In particular, as Figure~\ref{fig:butterfly} illustrates, geometric control of the bubbles is required for the butterfly structure. 

The main results (Theorem~\ref{main-thm-01}) are obtained by introducing the \emph{near-degenerate regime} for what we refer to as the \emph{$\alpha'$-Dynamics}. This dynamics is given by the iterates $f_\theta^{\qq_n-\qq_{n-1}}$, which provide a telescope-type mechanism to elevate bubbles from the deeper to shallower geometric levels; see Figure~\ref{fig:bubble-zoom}. The $\alpha'$ points are the centers of bubbles and form the hyperbolic repelling set under the $\alpha'$-Dynamics; see~\S\ref{sss:alpha'.Dynam} for details.

To the best of our understanding, the hyperbolic contraction of $f_\theta^{-1}$ in $\C\setminus \overline Z_\theta$ alone does not suffice to extend the geometric bounds established in~\cite{DL22} from $f_\theta\colon\overline Z_\theta\selfmap$ to the bubbles, see~\S\ref{sss:butterfly-rigidity} and also~\S\ref{sss:hoglet.singul} for details. This limitation motivates our use of the near-degenerate regime. Since its introduction by Kahn~\cite{K06} in the mid-2000s, the near-degenerate regime has been employed in a variety of settings, leading in each case to substantial progress on the corresponding problems; see~ \cite{KL09,KL09a, KL08,KL09b,DL22,DL26a,Lim23,DLuoLyu25,DLuo25,KLL26}.

\begin{figure}
    \centering
\begin{tikzpicture}
    \node[anchor=south west,inner sep=0] (image) at (0,0) {\includegraphics[width=0.9\linewidth]{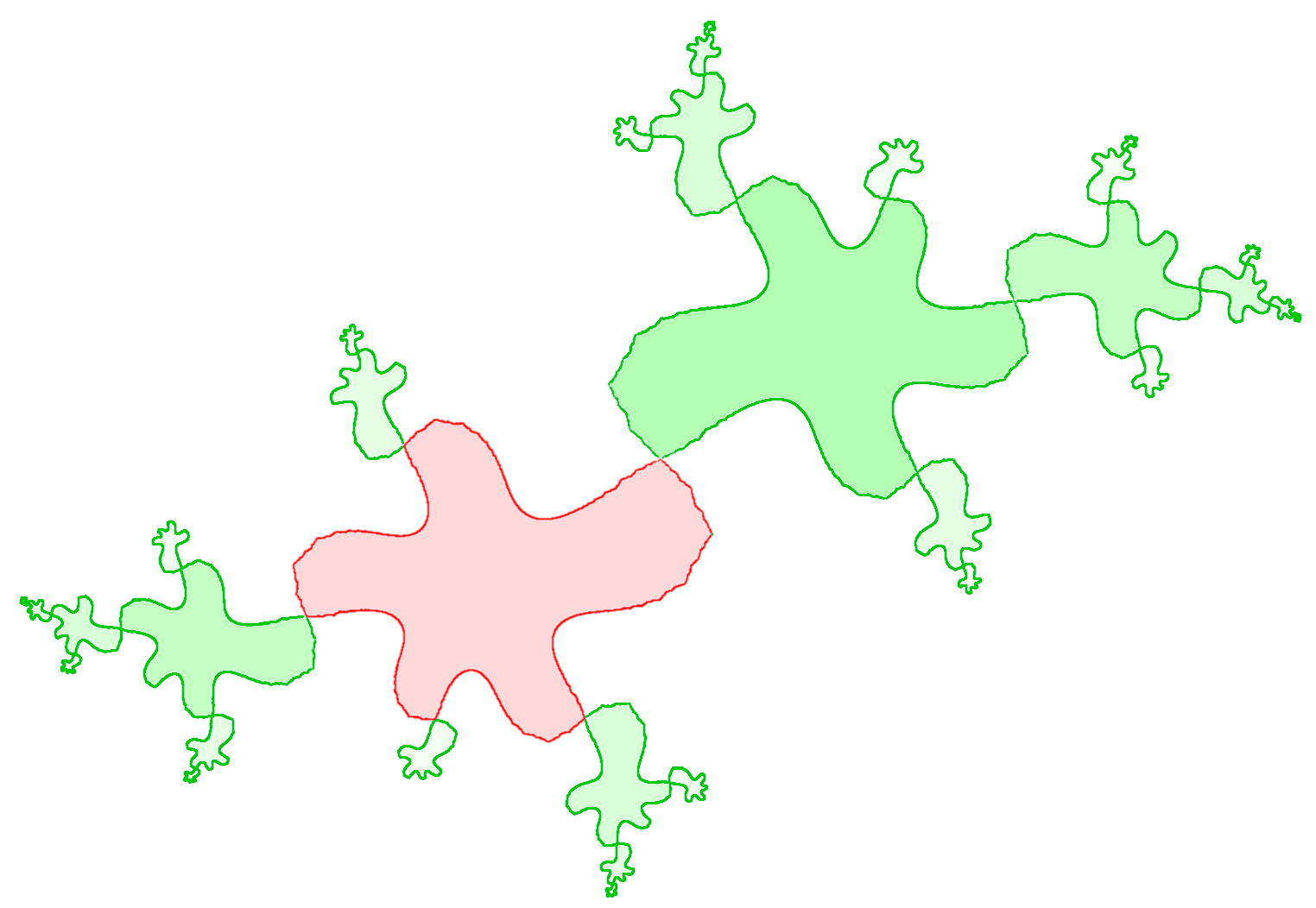}};
    \begin{scope}[
        x={(image.south east)},
        y={(image.north west)}
    ]
    \node [black] at (0.5,0.495) {\scriptsize $\bullet$};
    \node [black] at (0.5,0.46) {\scalebox{0.8}{$c_0$}};
    \node [black] at (0.235,0.32) {\scriptsize$\bullet$};
    \node [black] at (0.255,0.35) {\scalebox{0.8}{$c_{-1}$}};
    \node [black] at (0.443,0.207) {\scriptsize$\bullet$};
    \node [black] at (0.415,0.215) {\scalebox{0.8}{$c_{-2}$}};
    \node [black] at (0.308,0.51) {\scriptsize$\bullet$};
    \node [black] at (0.34,0.505) {\scalebox{0.8}{$c_{-3}$}};
    \node [black] at (0.33,0.208) {\scriptsize $\bullet$};
    \node [black] at (0.29,0.2) {\scalebox{0.8}{$c_{-4}$}};
    \node [red!50!black] at (0.375,0.35) {$Z_{\theta}$};
    \end{scope}
\end{tikzpicture}
    \caption{The Siegel disk $Z_\theta$ in red and the bubbles of generation $\leq 4$ of $f_\theta$ in green where $\theta = [1,1,1,1,100,1,1,\ldots]$}
    \label{fig:bubble-example}
\end{figure}

\subsection{Pseudo-Siegel disks and pseudo-bubbles}

The neutral quadratic polynomial $f_\theta$ admits a unique critical point at 
\[
c_0(f_\theta) := e^{-2\pi i \theta}/2.
\]
We say that an irrational $\theta \in (0,1)$ is of \emph{bounded type} if it has continued fraction expansion $\theta = [0;a_1,a_2,\ldots]$ satisfying $\sup_n a_n < \infty$.
By Douady-Ghys' quasiconformal surgery \cite{D87,G84}, we know that when $\theta$ is of bounded type, $f_\theta$ admits a Siegel disk $Z_\theta$ about $0$, the boundary of $Z_\theta$ is a quasicircle containing $c_0(f_\theta)$, and so the boundary $\partial Z_\theta$ is equal to the postcritical set (the closure of the critical orbit).

It is well-known that the dilatation of the boundary $\partial Z_\theta$ of the Siegel disk worsens as the bound in the continued fraction expansion of $\theta$ increases. 
By employing the near-degenerate analysis of $\partial Z_\theta$, the first author and Lyubich \cite{DL22} recently proved uniform \emph{a priori bounds} for quadratic Siegel disks.

\begin{theorem}[Pseudo-Siegel Bounds \cite{DL22}]
\label{thm:pseudo-siegel-bounds}
    There exists a universal constant $K>1$ such that for every bounded-type irrational $\theta$, there exists a closed, explicitly constructed $K$-quasidisk $\hat{Z}_\theta$ containing $Z_\theta$ such that $f_\theta :\hat{Z}_\theta \to \C$ is injective. 
\end{theorem}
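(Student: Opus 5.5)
The plan follows the near-degenerate strategy of \cite{DL22}. The argument is an induction on renormalization levels, run from deep levels up to level $0$.

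\textbf{Combinatorial setup.} Fix bounded type $\theta=[0;a_1,a_2,\ldots]$ with convergents $\pp_n/\qq_n$. The closed curve $\partial Z_\theta$ is partitioned into combinatorial intervals of level $n$, whose endpoints are the points $c_{-j}$ of the critical orbit for $0\le j<\qq_n$. The first-return maps $f_\theta^{\qq_n}$ act on these intervals essentially as a rotation.

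\textbf{Construction of $\hat Z_\theta$.} When $a_{n+1}$ is large, $f_\theta^{\qq_n}$ is close to a parabolic map near the corresponding level-$n$ intervals. There $\partial Z_\theta$ can form deep ``parabolic fjords'': narrow channels of the complement between nearby arcs of $\partial Z_\theta$ where the boundary geometry degenerates. In that case we fill each fjord by a region bounded by a hyperbolic geodesic (or a quasigeodesic) of the fjord, chosen consistently with the dynamics. Consistency means the filled regions are permuted by $f_\theta$ in the same way the intervals are, except at the level containing the critical value. Doing this at every level $n$, from deep to shallow, gives the pseudo-Siegel disk $\hat Z_\theta=\hat Z^0$. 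We need the fillings at different levels to be nested and compatible, so that each $\hat Z^n\supset Z_\theta$ is a closed Jordan disk.

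\textbf{Key estimate (a priori bounds).} The main step is to rule out degenerations. Suppose the extremal width $\mathcal W$ of the family of curves in $\C\setminus \hat Z^m$ connecting a combinatorial interval $I$ of level $m$ to the rest of $\partial\hat Z^m$ were larger than some universal $K_0$. We would then derive a contradiction by the following steps:
\begin{enumerate}
\item Use Kahn--Lyubich covering lemma type estimates, namely the Covering Lemma and the quasi-additivity law, to push this wide family to a deeper or shallower level with amplification.
\item Use the ``snake'' and parabolic fjord analysis to show that the filled-in fjords absorb the degeneration coming from parabolic behaviour.
\item Use $f_\theta^{-1}$-contraction together with the fact that $f_\theta$ is a degree-two map branched only at $c_0$ to transfer width along the orbit of $I$ while losing only a bounded factor.
\end{enumerate}
This gives a uniform bound $\mathcal W\le K_0$. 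Since $\hat Z^0$ is built with geodesic boundaries in the fillings, the uniform width bounds give bounded turning for $\partial\hat Z_\theta$ at all scales. By Ahlfors' criterion, $\hat Z_\theta$ is then a $K$-quasidisk with $K$ depending only on $K_0$. This is the main obstacle. Amplifying degenerations requires careful control of how a wide family interacts with the fjords and with the critical point, and this is where the near-degenerate regime does its essential work.

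\textbf{Injectivity.} Finally, $f_\theta$ is injective on $\hat Z_\theta$ because the only critical point $c_0$ lies on $\partial Z_\theta$. The fillings are chosen so that the fjords adjacent to $c_0$ and to $-c_0$ (the other preimage of the critical value) are filled symmetrically on one side only. As a result, $\hat Z_\theta$ contains no pair of points $z,-e^{2\pi i\theta}-z$ identified by $f_\theta$, which are exactly the pairs with $f_\theta(z)=f_\theta(w)$. This can be checked by following the image $f_\theta(\hat Z_\theta)$, which is again a union of $Z_\theta$ and filled fjords.
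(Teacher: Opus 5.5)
The paper does not reprove this theorem. It imports it from \cite{DL22} and summarizes the construction in \S\ref{ss:pseudo-siegel}. Measured against that summary, your outline has the right overall shape: fill parabolic fjords with hyperbolic-geodesic dams and run a near-degenerate argument. Your ``key estimate'' is a list of tools rather than an argument, and that is where essentially all of \cite{DL22} lives. Even as an outline, though, the step from width bounds to the quasidisk conclusion does not work as stated.

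\textbf{Width bounds do not give bounded turning.} Uniform width bounds for combinatorial intervals cannot by themselves give bounded turning. The curve $\partial Z_\theta$ already has uniformly bounded outer widths $W^+_3(I)$ (Theorem~\ref{thm:uniform-external}), and trivially bounded inner widths via the linearizing coordinate. Yet it is not a uniform quasicircle. What fails is \emph{bounded combinatorics}: a tile of $\mathfrak{D}_m$ consists of $a_{m+2}$ or $a_{m+2}+1$ tiles of $\mathfrak{D}_{m+1}$, and the intermediate scales are exactly where fjords degenerate. In \cite{DL22} this is resolved as follows.
\begin{enumerate}
\item A new nest of tilings $\mathcal{T}_m(\hat Z)$ is built on $\partial\hat Z$ itself, whose tiles include the regularized intervals $I'$ and the dams $\beta_I$.
\item For this nest one proves uniformly bounded combinatorics together with uniformly bounded \emph{inner and outer} widths. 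Inner bounds are no longer automatic once fjords are filled.
\item The quasidisk property then follows from the criterion \cite[Lemma 11.3]{DL22}.
\end{enumerate}
As written, your width quantity is also infinite: curves from $I$ to all of $\partial\hat Z^m\setminus I$ include curves to points adjacent to $I$. One must exclude the neighbors $L,R$, as in $W^+_3$.

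\textbf{Almost invariance and the anchor.} Your construction omits the property the whole argument runs on. The dams are not exactly permuted by $f_\theta$; with geodesic dams nothing is permuted at all. What is used instead is \emph{almost invariance} under $f^{q_{m+1}}$. When $a_{m+2}\ge\Mbold$, the dam endpoints sit at combinatorial distance about $\Mbold' l_{m+1}$ from the ends of the tile, where $\Mbold'=\lfloor e^{\sqrt{\log\Mbold}}\rfloor$. This produces collars $A_I$ with the following properties:
\begin{itemize}
\item each has modulus at least $\boldsymbol{\delta}(\Mbold)$;
\item each encloses $\beta_I\cup f^{q_{m+1}}(\beta_I)$;
\item each meets $\partial Z$ only inside $I$.
\end{itemize}
Your deep-to-shallow induction and your amplification contradiction also lack an anchor. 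A bounded-type $\theta$ may have infinitely many near-parabolic levels, so there is no deepest level to start from and nothing to contradict. The paper's summary handles this by treating EGM $\theta$ first, where $\hat Z^m=\overline Z$ for all large $m$, and then passing to limits.

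\textbf{Injectivity.} Your injectivity paragraph is wrong. The critical value has the single preimage $c_0$, so $-c_0$ plays no role. The deck involution of $f_\theta$ is the point reflection $z\mapsto -e^{2\pi i\theta}-z$ in the critical point. Having the critical point on $\partial Z_\theta$ does not by itself prevent $\hat Z_\theta$ from containing two distinct points exchanged by this involution. What makes injectivity work is that the dams stay combinatorially far from the tile endpoints, in particular from $c_0$, inside collars that avoid the critical value (compare the proof of Lemma~\ref{lem:main-bubble}).
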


The uniform quasidisk $\hat{Z}_\theta$ is called a \emph{pseudo-Siegel disk} of $f_\theta$. 
It is constructed from $\overline{Z_\theta}$ by filling in parabolic fjords; see Figure~\ref{fig:PS-disk}.
Unlike $\overline{Z_\theta}$, pseudo-Siegel disks $\hat{Z}_\theta$ exist for all neutral maps~\eqref{eq:NeutrFamily}. 

One can pass to the external coordinates $f_\theta \leadsto \fext$ (see~\eqref{eq:dfn:f_ext}) by uniformizing the complement of $\overline{Z_\theta}$.
Pseudo-Siegel bounds yield uniform real \emph{a priori} bounds on the unit circle for $\fext$ (Theorem \ref{thm:R-APB}). 
In this way, external maps $\{\fext\}$ associated with the Neutral Family~\eqref{eq:NeutrFamily} are similar to (uni-)critical circle maps. 
By this analogy, the next step is to understand the \emph{complex dynamics} of $\fext$ beyond the unit circle.

The key players of this paper are bubbles and associated pseudo-bubbles, which are defined as follows.

\begin{definition}
\label{def:bubbles}
    Let $\theta$ be a bounded-type irrational.
    The \emph{bubble} $B_{\theta}^0$ of $f_\theta$ of generation $0$ is the closure of $f_\theta^{-1}(Z_\theta)\backslash Z_\theta$; it is contained in a \emph{pseudo-bubble} $\hat{B}_{\theta}^0$ of generation $0$, which is the closure of the connected component of $f_\theta^{-1}\left( \textnormal{int}\hat{Z}_\theta\right)$ disjoint from $Z_\theta$.
    In general, a \emph{bubble} $B$ of $f_\theta$ of generation $g>0$ is a connected component of $f_\theta^{-g}(B_{\theta}^0)$, and it is contained in a \emph{pseudo-bubble} which is the unique connected component of $f_\theta^{-g}\left(\hat{B}_{\theta}^0 \right)$ containing $B$. 
    The \emph{root} of a (pseudo-)bubble of generation $g\geq 0$ is the unique critical point of $f_\theta^{g+1}$ contained in it.
\end{definition}

See Figure \ref{fig:bubble-example} for an illustration.
In this paper, we prove uniform bounds on the geometry and regularity of all pseudo-bubbles. 
Here we state the results in the original dynamical planes of $f_\theta$, although the proof is carried out for $\fext$.  

\begin{thmx}[Pseudo-Bubble Bounds]
    \label{main-thm-01}
    There exists a constant $K> 1$ such that the following holds for any bounded-type irrational $\theta$ and any bubble $B$ of $f_\theta$. 
    Let $p_n/q_n$ denote the $n$\textsuperscript{th} best rational approximation of $\theta$ and let $c_{m} := (f_\theta|_{\partial Z_\theta} )^m (c_0(f_\theta))$, $m \in \Z$ be the unique critical orbit on the boundary of $Z_\theta$.
    \begin{enumerate}[label=\textnormal{(\arabic*)}]
        \item Shape control: The pseudo-bubble $\hat{B}$ corresponding to $B$ is a $K$-quasidisk.
        \item Suppose $B$ is rooted at some pre-critical point $c_{-k}$ on the boundary of $Z_\theta$. 
        \begin{enumerate}[label=\textnormal{(\alph*)}]
            \item Scale control: The Euclidean diameter of $B$ is determined by the combinatorial scale of its root, that is,
            \[
                K^{-1} |c_{-k} - c_{q_n-k}| \leq \textnormal{diam}(B) \leq \textnormal{diam}\left(\hat{B}\right) \leq K \, |c_{-k} - c_{q_n-k}|,
            \]
            where $n \in \N$ is such that $q_n \leq k < q_{n+1}$.
            \item Angular control: Let $\gamma$ be the unique infinite hyperbolic geodesic in $\C \backslash \overline{Z_\theta}$ landing at $c_{-k}$. Then, every point in $\hat{B}$ is of hyperbolic distance at most $K$ from $\gamma$.
        \end{enumerate}
        \item Size control: If $B$ is not rooted on the boundary of $Z_\theta$, then with respect to the hyperbolic metric of $\C \backslash \overline{Z_\theta}$, the diameter of $\hat{B}$ is bounded above by $K$.
    \end{enumerate}
\end{thmx}

A summary of the proof can be found in \S\ref{ss:summary}.
Theorem \ref{main-thm-01} solves exactly the geometric obstruction that prevents us from proving rigidity and dynamical universality of neutral maps across all irrational rotation numbers; refer to \S\ref{ss:applications} for more details.

\subsection{Outline of the argument}
\label{ss:summary}

Let $\theta$ be a bounded-type irrational.
Denote \[f = f_\theta,\quad Z = Z_\theta,\quad \hat{Z} = \hat{Z}_\theta.\] 

We establish Theorem \ref{main-thm-01} by running the near-degenerate regime in the dynamical plane of the external map $\fext$. This map is obtained by uniformizing $\C \backslash \overline{Z}$, the exterior of the Siegel disk, to $\C \backslash \overline{\D}$, the exterior of the standard unit disk. 
The external map naturally extends to a minimal circle map that has the same rotation number as $f$ and is analytic except at one singular point which corresponds to the critical point. 
Similar to critical circle maps \cite{H87,Sw87}, $\fext$ satisfies \emph{real a priori bounds} (Theorem \ref{thm:R-APB}). 
The transfer from the dynamical plane of $f$ to that of $\fext$ is uniformly quasiconformal outside of the pseudo-Siegel disk $\hat{Z}$ (Corollary \ref{cor:new-psi}), hence we can reduce most of the analysis to the external coordinates.

The idea of using external coordinates dates back to the work of Perez-Marco \cite{PM97} in which he studied the existence of local hedgehogs for neutral holomorphic germs. 
Unlike our setting, the external map in his setting is an analytic circle diffeomorphism with the same rotation number as the map. The external map has appeared previously in the work of Avila and Lyubich \cite{AL22}, in which they formalized a more general notion of circle maps called \emph{quasicritical circle maps}.
The operation $f \leadsto \fext$ can be seen as the inverse of Douady-Ghys surgery.

In~\S\ref{sec:curve-families}, we consider \emph{primary} bubbles $\Bub^{k}$, that is, bubbles that are attached to pre-critical points $\crit_{-k}$ of $\fext$ along the unit circle. 
For $k \geq 0$, $\Bub^{k}$ comes with a natural curve family $\mathcal{F}_{k}$. 
The extremal width $W(\mathcal{F}_{k})$ of $\mathcal{F}_{k}$ encodes the conformal distance between the bubble $B^{k}$ and part of the boundary of $Z_\theta$ outside of a small neighborhood of $c_{-k}$. 

Here is the \emph{central idea}. Assume that $\mathcal{F}_{q_n}$ is sufficiently wide. Applying $\fext^{q_n-q_{n-1}}$, we produce a family that ``should'' consequently overflow both $\mathcal{F}_{q_{n-1}}$ and $\mathcal{F}_{q_{n-2}}$. It would follow that at least one of these two latter families is wider than $\mathcal{F}_{q_n}$. This eventually leads to a contradiction because $\mathcal{F}_{q_{0}}=\mathcal{F}_{1}$ is bounded. Hence, the saying:
\begin{center}
    \textit{``If life is bad today, it was even worse yesterday.''}
\end{center} 

This strategy is realized in Section \ref{sec:curve-families}; the exact statement (Theorem \ref{thm:amplification}) is
\begin{equation}
\label{eq:outline.main}
    W(\mathcal{F}_{q_{n-4}}) \geq \frac{4}{3} W(\mathcal{F}_{q_n}) - O(1) \qquad \text{ for } n \geq 4.
\end{equation}
For a more detailed outline, see Items~\ref{item:outline.1} -- \ref{item:outline.5} after Theorem~\ref{thm:curve-families}.

The implementation of the near-degenerate regime in this paper is organized so as \emph{not to deal with any ``external'' critical point} of $\fext^k$ outside the unit disk. Near the unit disk, these external critical points emerge in large numbers and evolve into essential singularities of the limiting transcendental dynamics; see~\cite{DL23} and~\S\ref{sss:alpha'.Dynam}. The number of involved critical points would affect a near-degenerate threshold, e.g., ``$-O(1)$'' as in~\eqref{eq:outline.main}. This is a primary technical reason for using the iterates $\fext^{q_n-q_{n-1}}$, which form the $\alpha'$-Dynamics. 

Consequently, we do not employ the Covering Lemma directly in this paper; rather, it is employed indirectly as one of the ingredients of~\cite{DL22}.

\subsection{Remarks, applications, and further developments}
\label{ss:applications} A primary consequence of Theorem~\ref{main-thm-01} is that it justifies the bubble behavior illustrated in Figure~\ref{fig:bubble-zoom} for all rotation numbers. Such control is an important ingredient in the bounded-type theory, see~\S\ref{sss:alpha'.Dynam}, but it has been unavailable without the bounded-type assumption, even within the near-parabolic renormalization theory put forward by Inou and Shishikura~\cite{IS}. In~\S\ref{sss:generalization} and~\S\ref{sss:butterfly-rigidity}, we discuss consequences of Theorem~\ref{main-thm-01}, while~\S\ref{sss:hoglet.singul} emphasizes the singular nature of the critical points of $\fext^{q_n}$ on the unit circle: small neighborhood of $c_1$ can \emph{blow-up} under $\fext^{-q_n}$.

\begin{figure}
    \centering
\begin{tikzpicture}
    \node[anchor=south west,inner sep=0] (image) at (0,0) {\includegraphics[width=1\linewidth]{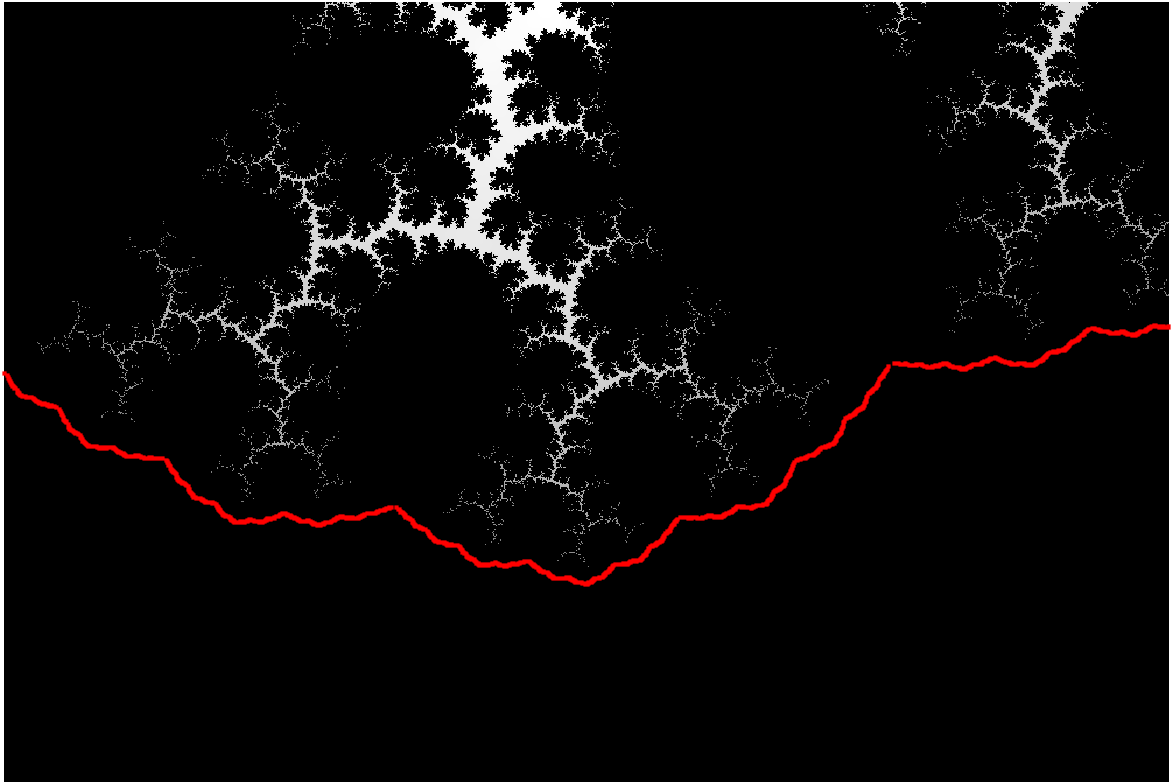}};
    \begin{scope}[
        x={(image.south east)},
        y={(image.north west)}
    ]
    \node [white] at (0.2,0.2) {\scalebox{1.25}{$Z_{\theta}$}};
    
    \node [white] at (0.5,0.255) {$\bullet$};
    \node [white] at (0.5,0.22) {$c_{1}$};
    \node [white] at (0.337,0.35) {$\bullet$};
    \node [white] at (0.33,0.31) {$c_{1-q_n}$};
    \node [white] at (0.58,0.335) {$\bullet$};
    \node [white] at (0.615,0.3) {$c_{1-q_{n+1}}$};
    \node [white] at (0.76,0.534) {$\bullet$};
    \node [white] at (0.8,0.5) {$c_{1-q_{n-1}}$};

    \node [green!80!black] at (0.52,0.49) {$\bullet$};
    \node [green!80!black] at (0.56,0.45) {$\alpha'_{1-q_{n+1}}$};
    \node [green!80!black] at (0.395,0.665) {$\bullet$};
    \node [green!80!black] at (0.38,0.625) {$\alpha'_{1-q_{n}}$};
    
    \draw [green!80!black,thick,-latex] (0.52,0.53) .. controls (0.5,0.63) .. (0.43,0.66);
    \node [green!80!black] at (0.56,0.63) {$f_\theta^{q_{n+1}-q_n}$};
    \draw [green!80!black,thick,-latex] (0.4,0.71) .. controls (0.45,0.9) .. (0.52,0.98); 
    \node [green!80!black] at (0.52,0.9) {$f_\theta^{q_{n}-q_{n-1}}$};
    \end{scope}
\end{tikzpicture}
    \caption{The dynamical plane of $f_\theta$ when $\theta = [2,1,1,1,\ldots]$ near the critical value $c_1=c_1(f_\theta)$. 
    The boundary of the Siegel disk $Z_\theta$ is colored red.
    The bubbles near $c_1$, shown in black, are uniformly bounded relative to their distance to $c_1$.
    The associated $\alpha'$-Dynamics (see~\S\ref{sss:alpha'.Dynam}) are illustrated in green.}
    \label{fig:bubble-zoom}
\end{figure}

\subsubsection{Bubbles and the $\alpha'$-Dynamics in the bounded-type case}\label{sss:alpha'.Dynam}
The control of bubbles and the $\alpha'$-points are an important part of the \emph{bounded-type} Siegel renormalization theory. It is shown in~\cite{McM98} that every point of $\partial Z_\theta$ is a measurable deep point of the filled Julia set of $f_\theta$, i.e. bubbles occupy ``exponentially fast'' most of the Lebesgue area of $\C\setminus \overline Z_\theta$ near $\partial Z_\theta$, cf, Figure~\ref{fig:bubble-zoom}. This leads to the dynamical universality of $f_\theta$ (see~\cite[Theorem 8.1]{McM98}) where the $f^{q_n}$-dynamics near $\partial Z_\theta$ converge exponentially fast toward a limiting transcendental dynamics.

Every bubble $B_i$ of $f_\theta$ has a center $\alpha'_i\in B_i$; it is the unique iterated preimage of the $\alpha$-fixed point of $\overline Z_\theta$ within $B_i$. In deep scale, the $\alpha'_i$ points evolve into the roofs of the associated transcendental bubbles. It is shown in~\cite[Section 5]{DL23} that the limiting $\alpha'$-points are exactly the branched points of the transcendental escaping sets consisting of essential singularities of the associated cascades. These points form the hyperbolic set of the limiting $\alpha'$-Dynamics induced by $f_{\theta}^{q_n-q_{n-1}}$. This leads to the puzzle theory~\cite{DL23} of the transcendental dynamics arising on the unstable manifolds of the respective renormalizations.

\subsubsection{Hoglets and pseudo-bubbles for all rotation numbers}
\label{sss:generalization}

One important application of pseudo-Siegel bounds is the existence of a \emph{Mother Hedgehog} $H_\theta$, a full planar continuum that contains the neutral fixed point $0$ and the critical point $c_0(f_\theta)$ such that $f_\theta : H_\theta \to H_\theta$ is a homeomorphism.
Indeed, by the compactness of uniform quasidisks, for every irrational $\theta$, $f_\theta$ admits a uniform pseudo-Siegel disk $\hat{Z}_\theta$. Consequently, the Hausdorff limits of bounded-type Siegel disks construct Mother Hedgehogs $H_\theta$ for all $\theta$.

 Theorem~\ref{main-thm-01} implies that, for every $\theta$, the iterated lifts of the Mother Hedgehog $H_\theta$ and its enclosing pseudo-Siegel disk $\hat{Z}^{-1}_\theta\supset H_\theta$ are arranged as predicted by the arithmetic of $\theta$. Iterated preimages of $\hat{Z}_{\theta}$ are still called \emph{pseudo-bubbles} (as they are locally connected) but iterated preimages of $H_\theta$ are called \emph{hoglets}. Thus every hoglet comes with a uniformly qc pseudo-bubble enclosing it. This is an important ingredient for the Pullback Argument with respect to hoglet butterflies; see~\S\ref{sss:butterfly-rigidity}.

 In \cite{DL26}, it is further shown that the pair $\hat{Z}_\theta\supset H_\theta$  can be constructed to depend continuously on $\theta$ with respect to the ``parabolic compactification'' $\overline{\Theta}$ on the space of irrationals. Theorem~\ref{main-thm-01} can be used to ``appropriately extend'' such continuous dependence to hoglet and enclosing pseudo-bubbles, cf, Figure~\ref{fig:bubble-zoom}.

\begin{figure}
    \centering
\begin{tikzpicture}
    \node[anchor=south west,inner sep=0] (image) at (0,0) {\includegraphics[width=1\linewidth]{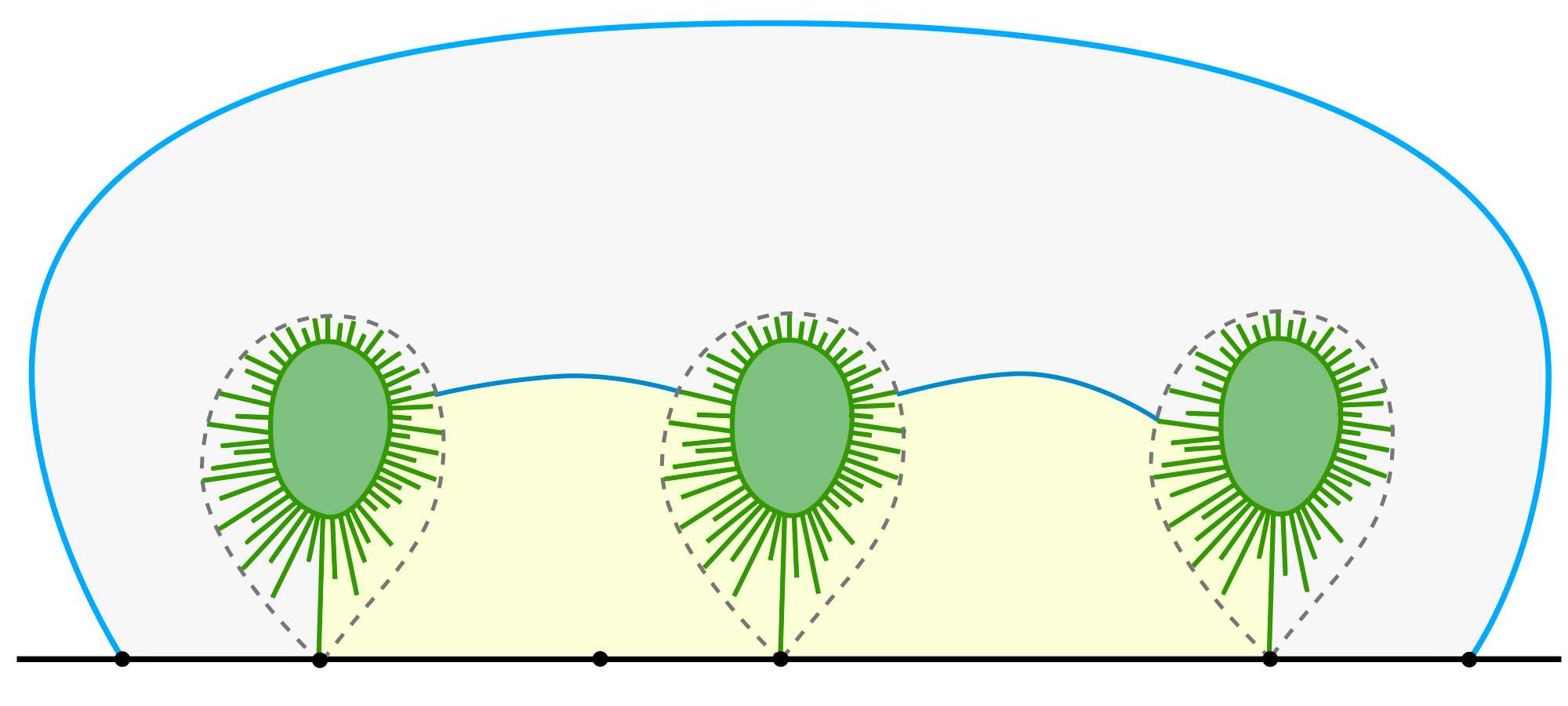}};
    \begin{scope}[
        x={(image.south east)},
        y={(image.north west)}
    ]
    \node [blue!50!black] at (0.5,0.8) {$\mathsf{U}_n$};
    \node [yellow!30!black] at (0.68,0.25) {$\mathsf{V}_n$};
    \node [yellow!30!black] at (0.33,0.25) {$\mathsf{W}_n$};
    \node [black] at (0.065,0.01) {$\crit_{1-q_{n-1}}$};
    \node [black] at (0.22,0.01) {$\crit_{1-q_n-q_{n-1}}$};
    \node [black] at (0.385,0.01) {$\crit_{1}$};
    \node [black] at (0.5,0.01) {$\crit_{1-q_{n}}$};
    \node [black] at (0.80,0.01) {$\crit_{1-q_n+q_{n-1}}$};
    \node [black] at (0.945,0.01) {$\crit_{1+q_{n-1}-2q_n}$};
    \draw[-latex] (0.675,0.4) .. controls (0.65,0.9) and (0.6,1.15) .. (0.575,0.85);
    \node [black] at (0.68,0.8) {$\fext^{q_n}$};
    \draw[-latex] (0.325,0.4) .. controls (0.35,0.9) and (0.4,1.15) .. (0.425,0.85);
    \node [black] at (0.3,0.8) {$\fext^{q_n+q_{n-1}}$};
    \draw[-latex] (0.41,0.2) -- (0.6,0.2);
    \node [black] at (0.565,0.135) {$\fext^{q_{n-1}}$};
    
    \node [red] at (0.21,0.4) {$\bullet$};
    \node [red] at (0.505,0.4) {$\bullet$};
    \node [red] at (0.815,0.4) {$\bullet$};
    \end{scope}
\end{tikzpicture}
    \caption{An illustration of the uniform hoglet butterfly structure for sufficiently high level $n$; see also~\S\ref{sss:butterfly-rigidity}. 
    The maps $\fext^{q_n+q_{n-1}}: \mathtt{W}_n \to \mathtt{U}_n$ and $\fext^{q_{n}}:\mathtt{V}_n \to \mathtt{U}_n$ are univalent.
    The outer blue arc bounding $\mathtt{U}_n$ has diameter $\asymp |\crit_1-\crit_{-q_n}|$, meets the real line at uniform angles, and is uniformly separated from the union of $\mathtt{W}_n$, $\mathtt{V}_n$, and the three pseudo-bubbles protecting the hoglets in green.
    }
    \label{fig:butterfly}
\end{figure}

\subsubsection{Hoglet butterfly bounds and rigidity}
\label{sss:butterfly-rigidity} Pseudo-Siegel bounds from~\cite{DL22} imply (by~\cite{DL26}) the existence of the renorm-attractor for the whole Neutral Family~\eqref{eq:NeutrFamily} under the sector renormalization. In a follow-up work \cite{DLL} with Misha Lyubich, we prove the rigidity of this renorm-attractor: any two combinatorially equivalent bi-infinite neutral towers in the renorm-attractor are conformally conjugate. A key geometric construction in \cite{DLL} is the uniform \emph{hoglet butterfly} structure illustrated in Figure \ref{fig:butterfly}. This structure is only made possible thanks to Theorem \ref{main-thm-01} of this paper.

Butterfly structures have been established in the theory of critical circle maps \cite{dF99,Y99,dFdM2}. They serve as a preparation for the Pullback Argument. For bounded-type $\theta$, butterfly structures for the external map $\fext$ have been justified in \cite{AL22} using the framework of quasicritical circle maps. For general $\theta$, however, two fundamental differences arise compared with the above-mentioned constructions. Both arise from the non-JLC phenomenon in the Neutral Family~\eqref{eq:NeutrFamily}: as Figure~\ref{fig:butterfly} illustrates, the hoglets that form the butterflies need not be locally connected.

Firstly, $\fext$ can be discontinuous at the relevant critical points; see~\S\ref{sss:hoglet.singul} and Figure \ref{fig:singularity}. Secondly, in the (near-)Cremer case, the local geometry near $\alpha'_{1-q_n}$ encodes the anti-renormalization history. We therefore do not expect a hybrid conjugacy between ``just'' forward-infinite combinatorially equivalent towers. The Pullback Argument in \cite{DLL} is carried out for \emph{bi-infinite} towers, using uniformly qc pseudo-bubbles (see~\S\ref{sss:generalization}) around hoglets as a preparatory ingredient.

We believe that rigidity of bi-infinite towers suffices to establish the uniform hyperbolicity of the full neutral renormalization; a strategy is outlined in~\cite{DLL}. 

\begin{figure}
    \centering
\begin{tikzpicture}
    \node[anchor=south west,inner sep=0] (image) at (0,0) {\includegraphics[width=0.575\linewidth]{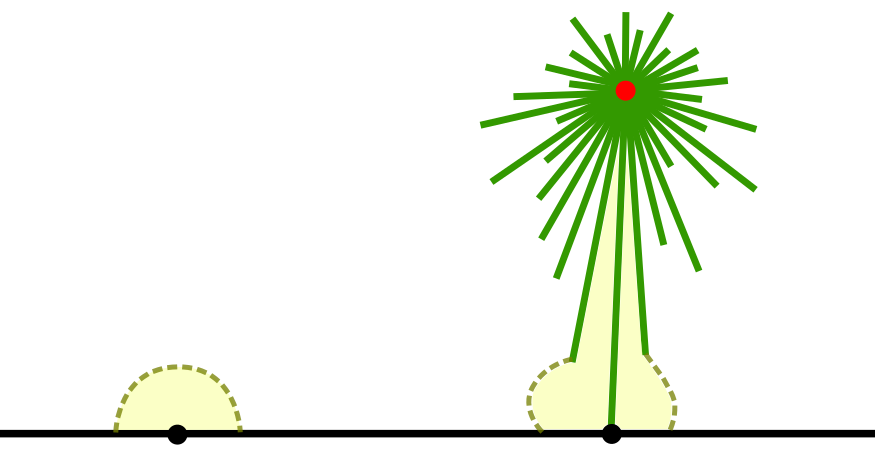}};
    \begin{scope}[
        x={(image.south east)},
        y={(image.north west)}
    ]
    \node [black] at (0.2,0) {$\crit_{1}$};
    \node [orange] at (0.2,0.16) {${\mathsf E}_{1}$};
    \node [black] at (0.7,0) {$\crit_{1-q_{n}}$};
    \draw[-latex] (0.59,0.22) .. controls (0.49,0.28) and (0.39,0.28) .. (0.29,0.22);
    \node [black] at (0.45,0.34) {$\fext^{q_{n}}$};
    \node [red] at (0.92,0.9) {$\alpha'_{1-q_n}$};
    \draw [red] (0.845,0.88) -- (0.74,0.82);
    \end{scope}
\end{tikzpicture}
    \caption{Hoglet-type singularity for $\fext^{q_n}$ in the Cremer case: a small neighborhood ${\mathsf E}_{1}\subset \C\setminus \overline {\mathbb D}$ of $\crit_1$ blows-up under $\fext^{q_n}$, see~\eqref{eq:E1.blowup}.}
    \label{fig:singularity}
\end{figure}

\subsubsection{Hoglet-type singularities at critical points of $\fext^{q_n}$.}\label{sss:hoglet.singul} 
One fundamental difference between the external map $\fext$ and critical circle maps is that the lift of points near the critical value blows up as shown in Figure \ref{fig:singularity}.

Let $\mathsf{E}_1$ be any neighborhood of $\crit_1$ in $\C \backslash \overline{\D}$ with Euclidean diameter much smaller than $|\crit_1 - \crit_{1-q_n}|$.
There are exactly two connected components $\mathsf{E}^l_{1-q_n}, \mathsf{E}^r_{1-q_n}$ of $\fext^{-q_n}(\mathsf{E}_1)$ attached to the pre-critical point $\crit_{1-q_n}$. 
In the Cremer case, the domain $\mathsf{E}^{l/r}_{1-q_n}$ reaches the point $\alpha'_{1-q_n}$, the center of the hoglet attached to $\crit_{1-q_n}$.
Consequently,
\begin{equation}
\label{eq:E1.blowup}
\diam \big(\mathsf{E}^{l/r}_{1-q_n} \big)\ \ge |\crit_{1-q_n} - \alpha'_{1-q_n}| \succeq\ |\crit_1 - \crit_{1-q_n}|\ \gg \diam (\mathsf{E}_1).    
\end{equation}

This observation implies that, without Theorem~\ref{main-thm-01}, the diameter of the butterfly wings $\mathtt{W}_n$ and $\mathtt{V}_n$ in Figure \ref{fig:butterfly} could become arbitrarily large relative to $|\crit_1 - \crit_{1-q_n}|$, thereby preventing the uniform butterfly structure. To the best of our understanding, this possibility is not ruled out by the nonuniform hyperbolic contraction of $\fext^{-1}$ outside the unit disk.

\subsection{Acknowledgements}

We would like to thank Misha Lyubich for various discussions on neutral renormalization.
The second author is grateful to the Institute for Mathematical
Sciences at Stony Brook for their hospitality. The first author was partially supported by the Simons Fellows grant and by the NSF grants DMS 2055532 and DMS 2246485.


\section{Pseudo-Siegel bounds}

In this section, we summarize the main results of \cite{DL22} and discuss some of the details that will be necessary for later sections.

\subsection{Pseudo-Siegel disks} 
\label{ss:pseudo-siegel}

Denote the set of irrationals by $\Theta := (0,1) \backslash \Q$. For $\theta \in \Theta$, we write the continued fraction expansion as
\[
    \theta = [0;a_1,a_2,a_3,\ldots]:= \cfrac{1}{a_1 + \cfrac{1}{a_2 + \frac{1}{a_3 + \ldots }}}.
\]
For $n \geq 1$, the $n$\textsuperscript{th} best rational approximation of $\theta$ is the rational number $p_n/q_n:= [0;a_1,\ldots,a_n]$. 
We say that $\theta$ is 
\begin{itemize}
    \item of \emph{bounded type} if $\sup_n a_n < \infty$, and 
    \item \emph{eventually golden mean} (EGM) if $\lim_{n\to \infty} a_n = 1$. 
\end{itemize}
Denote by $\Theta_{\text{bdd}}$ the set of bounded-type irrationals, and by $\Theta_{\text{\text{EGM}}}$ the set of EGM irrationals.

In what follows, we will usually choose a fixed irrational $\theta$ in either $\Theta_{\text{bdd}}$ or $\Theta_{\text{EGM}}$.
We say that a constant is \emph{universal} if it is independent of the choice of $\theta$.

For $\theta \in \Theta_{\text{bdd}}$, it is known from Douady-Ghys surgery \cite{D87,G84} that the quadratic polynomial $f_\theta$ admits a Siegel disk $Z_\theta$ with quasicircular boundary passing through the critical point.
In particular, the Riemann mapping $(Z_\theta,0) \to (\D,0)$ extends to a global quasiconformal map and conjugates $f_\theta|_{\partial Z_\theta}$ and the rigid rotation by angle $\theta$ on $\partial \D$.
We equip $\partial Z_\theta$ with the \emph{combinatorial metric} that is the unique invariant normalized metric.

We say that an interval $I$ on $\partial Z_\theta$ is a \emph{combinatorial interval of level $n$} if it is the shortest interval connecting a point $x \in \partial Z_\theta$ and $f^{q_n}_\theta(x)$, where $q_n$ is the denominator of the $n$\textsuperscript{th} best rational approximation of $\theta$.
All combinatorial intervals of level $n$ have the same combinatorial length, which we will denote by $l_n$. These $l_n$'s are related by the equation
\[
    l_{n-1} = a_{n+1} l_n + l_{n+1}.
\]

Given a combinatorial interval $I \subset \partial Z_\theta$ of some level $n$, we denote by $W^+_3(I)$ the extremal width of the family of curves in $\RS \backslash \overline{Z_\theta}$ that start from $I$ and end at $\partial Z_\theta \backslash (L \cup I \cup R)$, where $L$ and $R$ are the neighboring level $n$ combinatorial intervals on the left and right of $I$ respectively. 
A summary of the key properties of extremal width can be found in Appendix \ref{sec:appendix}.

\begin{theorem}[Uniform External Bounds \cite{DL22}]
\label{thm:uniform-external}
    There exists a universal $K>0$ such that for every $\theta \in \Theta_{\textnormal{bdd}}$ and every combinatorial interval $I$ on the boundary of the Siegel disk $Z_\theta$ of $f_\theta$, we have $W^+_3(I) \leq K$.
\end{theorem}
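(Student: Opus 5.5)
This theorem is quoted from \cite{DL22} and is the foundational input here, so we only outline how its proof goes there. The argument is a near-degenerate (Kahn--Lyubich style) amplification on the boundary of the Siegel disk. Fix $\theta\in\Theta_{\textnormal{bdd}}$. Compactness of bounded-type Siegel disks with a fixed bound on the $a_n$ gives some bound on $W^+_3(I)$ that depends on $\theta$. The task is to make this bound universal.

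The plan is to argue by contradiction. Suppose some combinatorial interval $I$ of level $n$ has $W^+_3(I)=K$ with $K$ huge. We then show that some interval of a shallower level $m<n$ has width at least $(1+\epsilon)K - O(1)$. Iterating this pushes the large width up to level $0$, where all widths are bounded by a universal constant. That contradiction gives the universal $K$ in the statement.

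The amplification step runs as follows.

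\textbf{Step 1.} Consider the wide family of curves emanating from $I$ into $\RS\setminus\overline{Z_\theta}$. Push it forward by the first return map $f^{q_{n+1}}$, or by a suitable iterate of bounded degree along the orbit of $I$. Near-degeneracy forces most of the family to be ``local'': it lands close to $I$, on combinatorially neighboring intervals, rather than spreading out.

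\textbf{Step 2.} Apply the Covering Lemma of Kahn--Lyubich. A degree-$2$ branched covering cannot both keep the width large and preserve the collar around the critical value unless width is transferred. Consequently the width either becomes concentrated on a bounded combinatorial neighborhood, or it is amplified by a definite factor.

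\textbf{Step 3.} Use the combinatorics $l_{n-1}=a_{n+1}l_n+l_{n+1}$. When $a_{n+1}$ is large, a wide family of level $n$ has to overflow a family of level $n-1$ in an essential way. The parallel and series laws for extremal width (Appendix~\ref{sec:appendix}) then give $W^+_3(J)\ge (1+\epsilon)W^+_3(I)-O(1)$ for some $J$ of level $n-1$.

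The main obstacle is the \emph{parabolic} regime, where $a_{n+1}$ is large. There the boundary $\partial Z_\theta$ develops deep fjords. These fjords absorb extremal width without producing any amplification, so the naive estimate fails. The remedy in \cite{DL22} is to replace $\overline{Z_\theta}$ by the pseudo-Siegel disk $\hat Z_\theta$, obtained by filling in the fjords with controlled ``parabolic'' regions. On these regions the dynamics is almost a translation, and one can compute in Fatou coordinates to show that width through a filled fjord is comparable to width across its mouth, up to a bounded loss.

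With this filling in place, the amplification is run level by level, and the losses incurred inside the fjords are absorbed into the $O(1)$ term. The construction of $\hat Z_\theta$ and the amplification therefore have to be set up simultaneously by induction on the level, and this interlocking induction is the technical heart of the argument.
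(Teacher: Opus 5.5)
The paper gives no proof of this statement. It is imported from \cite{DL22}, with only three remarks: the proof relies on the pseudo-Siegel construction of Theorem~\ref{thm:pseudo-siegel-bounds}; \cite[Lemma 10.5]{DL22} is one of its steps; and the Covering Lemma enters only through \cite{DL22}. Citing \cite{DL22} is therefore the right move, and that part of your proposal agrees with the paper.

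The outline you attach, however, is not a proof, and its one concrete mechanism is wrong as stated. In Step 3 you claim that when $a_{n+1}$ is large, a wide level-$n$ family must overflow a level-$(n-1)$ family. But since $l_{n-1}=a_{n+1}l_n+l_{n+1}$, a level-$(n-1)$ interval contains about $a_{n+1}$ level-$n$ intervals. Moreover, $W^+_3(I)$ counts every curve landing outside the two level-$n$ neighbours of $I$, including curves landing within combinatorial distance $l_{n-1}$ of $I$. Such curves overflow no family defining $W^+_3(J)$ for $J$ of level $n-1$. A priori nothing stops the whole wide family from being of this local kind; curves across a parabolic fjord, whose two banks lie in a single tile, are exactly of this type. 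Large $a_{n+1}$ only enlarges the room for such families. You say as much two paragraphs later (``fjords absorb extremal width without producing any amplification''), so Step 3 and your description of the obstacle contradict each other. Amplification in the parabolic regime is precisely what must be proved, and the proposal only asserts it.

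Your induction also lacks a base. You stop at level $0$, claiming all widths there are universally bounded, but nothing gives this a priori. When $a_1$ is large, most level-$0$ intervals lie in the level-$(-1)$ parabolic fjord, which is the same difficulty again. Douady--Ghys surgery only yields constants depending on $\sup_n a_n$.

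Finally, the estimates that would carry the argument are stated without derivation:
\begin{itemize}
\item how the Covering Lemma produces a factor $1+\epsilon$ when pushing forward by $f^{q_{n+1}}$, which has many critical points off $\overline{Z_\theta}$ (compare the paper's deliberate avoidance of such external critical points in \S\ref{ss:summary});
\item the Fatou-coordinate comparison of width through a filled fjord with width across its mouth.
\end{itemize}
As it stands, the proposal should be presented simply as a citation of \cite{DL22}, which is what the paper does.
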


The proof of this theorem relies on Theorem \ref{thm:pseudo-siegel-bounds}, in particular the construction of \emph{$K$-pseudo-Siegel disk} $\hat{Z}_\theta$ for $\theta \in \Theta_{\textnormal{bdd}}$.
To be more precise, an infinite nest of pseudo-Siegel disks 
\[
    \ldots \subset \hat{Z}^2_\theta \subset \hat{Z}^1_\theta \subset \hat{Z}^0_\theta \subset \hat{Z}^{-1}_\theta = \hat{Z}_\theta
\]
is constructed explicitly. 
Below, we will summarize the key ideas of the construction. 

\begin{figure}
    \centering
\begin{tikzpicture}
    \node[anchor=south west,inner sep=0] (image) at (0,0) {\includegraphics[width=1\linewidth]{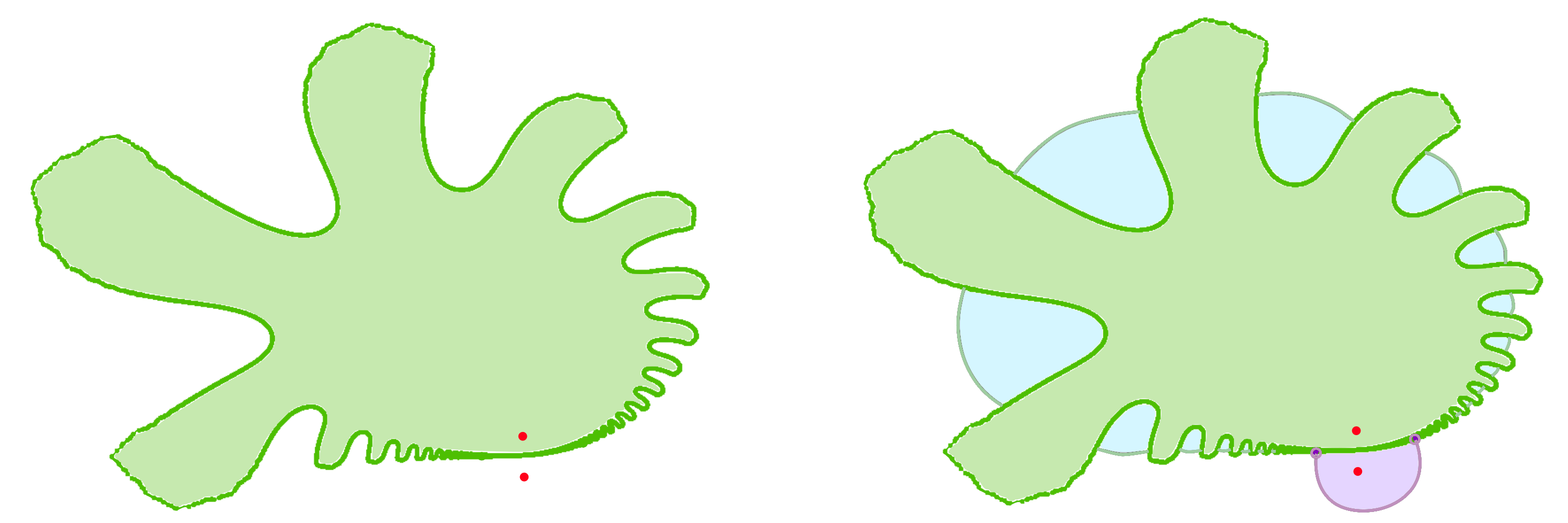}};
    \begin{scope}[
        x={(image.south east)},
        y={(image.north west)}
    ]
    
    \node [green!20!black] at (0.3,0.4) {$Z$};
    \node [green!20!black] at (0.83,0.4) {$\hat{Z}$};
    \end{scope}
\end{tikzpicture}
    \caption{The Siegel disk and a pseudo-Siegel disk of $f_\theta$ when $\theta = [0;100,100,2,1,1,\ldots]$. Level $-1$ dam bounds the parabolic fjord in purple and level $0$ dams bound the fjords in blue. The alpha and beta fixed points are marked in red.}
    \label{fig:PS-disk}
\end{figure}

To simplify our exposition, we will assume that $a_1 \geq 2$ or equivalently $\theta<1/2$; if otherwise, then we can define $\hat{Z}^m_\theta$ as the reflection of $\hat{Z}^m_{1-\theta}$. 
We will also denote $f = f_\theta$, $Z = Z_\theta$, and $\hat{Z}^m = \hat{Z}^m_\theta$. 
For $k \geq 1$, denote by $\text{CP}(f^k)$ the set of critical points of $f^k$.
 
For every $n \geq -1$, denote by $\mathfrak{D}_n$ the level $n$ tiling of $\partial Z$, in which it consists of the closure of every component of $\partial Z \backslash \text{CP}(f^{q_{n+1}})$. 
For $n =-1$, $\mathfrak{D}_{-1}$ consists of a single tile which is the whole boundary of $Z$.
For $n \geq 0$, every tile in $\mathfrak{D}_n$ has combinatorial length either $l_n$ or $l_n + l_{n+1}$.

Assume first that $\theta$ is EGM. For all sufficiently high $m \in \N$, $\hat{Z}^m$ is set to be equal to the closure of the Siegel disk $Z$ of $f$. 
In general, for all $m\geq -1$, $\hat{Z}^m$ is constructed inductively as follows. See Figure \ref{fig:PS-disk}.

Let us fix a large constant $\Mbold \in \N$ and set $\Mbold' = \lfloor e^{\sqrt{\log \Mbold}} \rfloor$. There are two cases to consider.
\begin{itemize}
    \item \underline{Bounded regime:} $a_{m+2}< \Mbold$ \\
    We set $\hat{Z}^m$ to be equal to $\hat{Z}^{m+1}$.
    \item \underline{Near-parabolic regime:} $a_{m+2} \geq \Mbold$ \\
    We add (\emph{level} $m$) \emph{dams} $\beta_I$ for each tile $I=[a,b]$ in $\mathfrak{D}_m$. 
    Each dam $\beta_I$ is a hyperbolic geodesic of $\C \backslash \overline{Z}$ connecting two points $a'$ and $b'$. The point $a'$ (resp. $b'$) is the unique point in $\text{CP}(f^{q_{m+2}}) \cap I$ such that the combinatorial distance between $a'$ and $a$ (resp. $b'$ and $b$) is either $\Mbold'l_{m+1}$ or $\Mbold'l_{m+1}+l_{m+2}$. 
    Then, we set $\hat{Z}^m$ to be the unique smallest closed disk containing $\hat{Z}^{m+1}$ and $\beta_I$ for all $I \in \mathfrak{D}_m$.
\end{itemize}

Each of $\hat{Z}^m$ is no longer invariant in general, but it is almost invariant under $f^{q_{m+1}}$ in the following sense. 
The choice of $\Mbold'$ ensures that there exists a collection of pairwise disjoint \emph{collars} $\{A_I\}_{I \in \mathfrak{D}_m}$ such that 
\begin{itemize}
    \item each $A_I$ is an annulus with modulus greater than some definite constant $\boldsymbol{\delta} = \boldsymbol{\delta}(\Mbold)>0$,
    \item $\beta_I \cup f^{q_{m+1}}(\beta_I)$ is contained in the bounded connected component of $\C \backslash A_I$, and
    \item the intersection of $A_I$ and the boundary of the Siegel disk $Z$ is a union of four intervals contained in the interior of $I$. 
\end{itemize}
See Figure \ref{fig:fjord}. 
The third item implies that collars are disjoint from the critical value.
This collar property will be used in \S\ref{ss:uniform-size}.

\begin{figure}
    \centering
\begin{tikzpicture}
    \node[anchor=south west,inner sep=0] (image) at (0,0) {\includegraphics[width=0.85\linewidth]{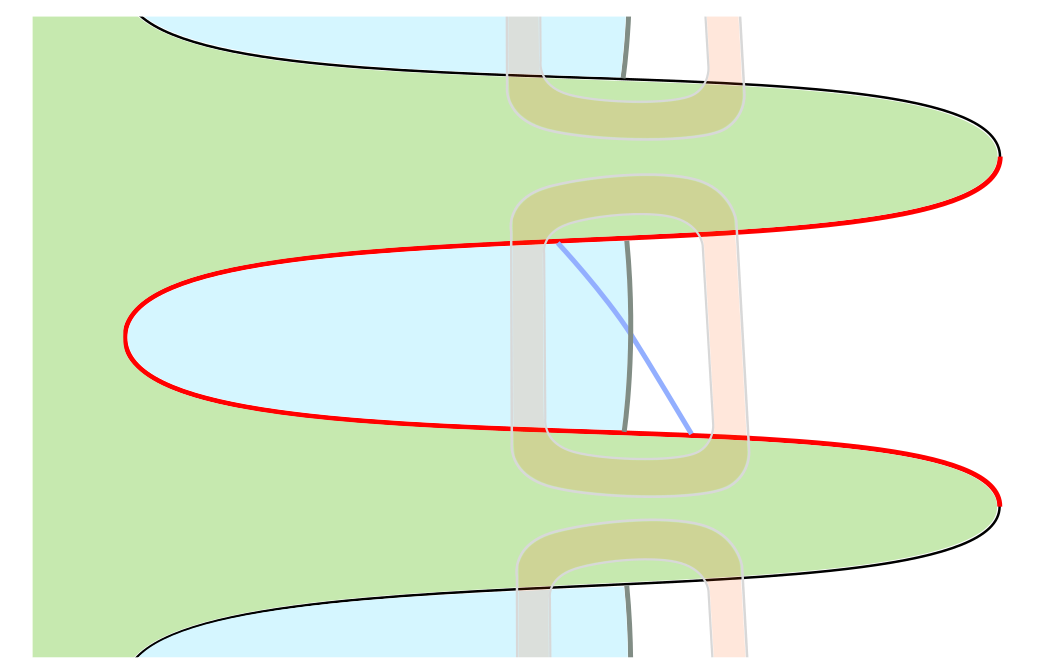}};
    \begin{scope}[
        x={(image.south east)},
        y={(image.north west)}
    ]
    
    \node [red] at (0.925,0.33) {$I$};
    \node [gray!50!black] at (0.628,0.595) {$\beta_I$};
    \node [blue!80!white] at (0.715,0.4) {\scalebox{0.8}{$f^{q_{m+1}}(\beta_I)$}};
    \node [orange!80!black] at (0.505,0.5) {$A_I$};
    \node [green!25!black] at (0.25,0.25) {$Z$};
    \end{scope}
\end{tikzpicture}
    \caption{For $I \in \mathfrak{D}_m$, the collar $A_I$ surrounds the dam $\beta_I$ and its image $f^{q_{m+1}}(\beta_I)$.}
    \label{fig:fjord}
\end{figure}

In the final step of the proof of Theorem \ref{thm:pseudo-siegel-bounds} in \cite[\S11]{DL22}, a nest of tilings $\mathcal{T}(\hat{Z}) = \left\{\mathcal{T}_m(\hat{Z}) \right\}_{m\geq -1}$ of ``essentially bounded geometry'' on $\partial \hat{Z}$ is introduced. 
There exists a constant $K' = K'(\Mbold)>1$ such that $\mathcal{T}_m(\hat{Z})$ satisfies the following properties.
\begin{itemize}
    \item Bounded combinatorics: There exists some universal constant $M \in \N$ such that for all $m \geq -1$, every tile in $\mathcal{T}_m(\hat{Z})$ is a union of at most $M$ tiles of $\mathcal{T}_{m+1}(\hat{Z})$.
    \item Bounded inner geometry: For every tile $I$ in $\mathcal{T}_m(\hat{Z})$, denote by $\mathcal{F}^-_3(I)$ the family of proper curves in the interior of $\hat{Z}$ that start from $I$ and land on $\partial \hat{Z} \backslash (L_I \cup I \cup R_I)$, where $L_I$ and $R_I$ are the two neighboring tiles in $\mathcal{T}_m(\hat{Z})$. 
    Then, the width $W^-_3(I)$ of $\mathcal{F}^-_3(I)$ satisfies
    \[
        W^-_3(I) \leq K'.
    \]
    \item Bounded outer geometry: For every tile $I$ in $\mathcal{T}_m(\hat{Z})$, denote by $\mathcal{F}^+_3(I)$ the family of proper curves in $\RS \backslash \hat{Z}$ that start from $I$ and land on $\partial \hat{Z} \backslash (L_I \cup I \cup R_I)$. 
    Then, the width $W^+_3(I)$ of $\mathcal{F}^+_3(I)$ satisfies
    \[
        W^+_3(I) \leq K'.
    \]
\end{itemize}
This nest of tilings is constructed so that it is compatible with $\mathfrak{D}_m$ in the following sense.
For every $m \geq -1$, we say that an interval in $\mathfrak{D}_m$ is \emph{regular} if both of its endpoints are on the boundary of $\hat{Z}$. For every regular interval $I = [a,b]$ in $\mathfrak{D}_m$, denote by $I'$ the smallest closed subinterval of $\partial \hat{Z}$ with endpoints $a$ and $b$. Then, the regularization $I'$ of $I$ is a tile in $\mathcal{T}_{m}(\hat{Z})$ and if $a_{m+2}\geq \Mbold$, then the dam $\beta_I$ is also a tile in $\mathcal{T}_{m+1}(\hat{Z})$.

Lastly, for any bounded-type non-EGM $\theta$, we can define the level $m$ pseudo-Siegel disk $\hat{Z}^m_\theta$ of $f_\theta$ to be the limit of $\hat{Z}^m_{\theta_n}$ where $\theta_n$ is a sequence of EGM irrationals converging to $\theta$.
This limiting procedure relies on the continuity of bounded-type Siegel disks which is a consequence of the Douady-Ghys surgery.

\begin{remark}
\label{rem:combinatorial-threshold}
    It is important that the constants $\boldsymbol{\delta}$, $K'$, and ultimately $K$ are independent of $\theta$ and $m$. 
    In principle, for every choice of the combinatorial threshold $\Mbold$, we obtain a $K(\Mbold)$-pseudo-Siegel disk $\hat{Z} = \hat{Z}(\Mbold)$ and it has the property that $\hat{Z}(\Mbold+1) \subset \hat{Z}(\Mbold)$.
    As $\Mbold$ increases, the constants $\boldsymbol{\delta}$, $K'$, $K$ will become worse. 
    In this paper, the only moment $\Mbold$ is allowed to vary is in Lemma \ref{lem:disjointness}.
    Aside from that, the combinatorial threshold $\Mbold$ will be treated as a fixed sufficiently large universal integer.
\end{remark}

\begin{remark}
    What we have described above is the \emph{geodesic} pseudo-Siegel disk in which all dams are hyperbolic geodesics.
    In \cite{DLL}, we will consider a more dynamical variant in which for every $m$, the dam for the tile in $\mathfrak{D}_m$ closest to the critical value is geodesic and its preimages constitute the other dams of the same level.
    This dynamical pseudo-Siegel disk still satisfies the main theorems in \cite{DL22} as well as this paper.
\end{remark}

\subsection{Mother Hedgehogs}
\label{ss:mother-hedgehog}

This subsection does not affect Sections \ref{sec:external}--\ref{sec:pseudo-bubble-bounds}.
It solely serves to discuss the extension to all rotation numbers.

\begin{definition}
    For $\theta \in \Theta$, a subset $H$ of $\C$ is called a \emph{hedgehog} of $f_\theta$ if it is a full compact connected subset of $\C$ containing the neutral fixed point $0$ such that $f_\theta: H \to H$ is a homeomorphism. 
    If additionally $H$ contains the critical point, then we call $H_\theta := H$ a \emph{Mother Hedgehog}.
\end{definition}

The existence of a Mother Hedgehog $H_\theta$ for any irrational $\theta$ is a corollary of Theorem \ref{thm:pseudo-siegel-bounds}; it is simply a Hausdorff limit of the closure of bounded-type Siegel disks.
In the unbounded case, $H_\theta$ can have a wild topology.

We identify the space of irrationals $\Theta$ with the sequence space
\[
    \big( \{-,+\} \times \{2,3,4,\ldots \} \big)^{\N}
\]
and consider its natural compactification
\[
    \overline{\Theta} := \big( \{-,+\} \times \{2,3,4,\ldots, \infty\} \big)^{\N}.
\]
Let us equip $\Theta$ with the subspace topology induced by the compactification $\overline{\Theta}$.
The space $\overline{\Theta}$ is called the \emph{parabolic compactification} of the irrationals because of its compatibility with sector renormalization and parabolic enrichment; refer to \cite{Lim26} for details.

\begin{theorem}[Uniform Continuity \cite{DL26}]
\label{thm:uniform-continuity}
    For $\theta \in \Theta_{\textnormal{bdd}}$, the pseudo-Siegel disks $\ldots \subset \hat{Z}^{1}_\theta \subset \hat{Z}^{0}_\theta \subset \hat{Z}^{-1}_\theta$ can be constructed such that for any fixed $m$, 
    the Riemann mapping $\RS \backslash \overline{\D} \to \RS \backslash \hat{Z}^m$ fixing $\infty$ and sending $1$ to the critical point $c_0(f_\theta)$ depends uniformly continuously on $\theta$ in the $C^0$ topology. 
    Consequently, for $\theta \in \Theta$, 
    \begin{enumerate}[label=\textnormal{(\arabic*)}]
        \item pseudo-Siegel disks $\ldots \subset \hat{Z}^{1}_\theta \subset \hat{Z}^{0}_\theta \subset \hat{Z}^{-1}_\theta$ extend by continuity to all $\theta \in \Theta$ such that each $\hat{Z}^{m}_\theta$ is $f^{q_{m+1}(\theta)}_\theta$-almost invariant;
        \item the Mother Hedgehog of $f_\theta$ is equal to 
        \[
            H_\theta := \bigcap_{m\geq-1} \hat{Z}^{m}_\theta,
        \]
        and it depends uniformly continuously on $\theta$ in the Hausdorff topology;
        \item the critical point $c_0(f_\theta)$ is an accessible point of $H_\theta$ from infinity and $H_\theta \backslash \{c_0(f_\theta)\}$ is connected;
        \item the Riemann mapping $\RS \backslash \overline{\D} \to \RS \backslash H_\theta$ fixing $\infty$ and sending $1$ to $c_0(f_\theta)$ depends uniformly continuously on $\theta$ in the compact open topology.
    \end{enumerate}
\end{theorem}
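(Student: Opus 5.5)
The plan is to take the bounded-type uniform continuity of the Riemann maps $\phi^m_\theta\colon \RS\backslash\overline{\D}\to\RS\backslash\hat{Z}^m_\theta$ (normalized at $\infty$ and $1\mapsto c_0(f_\theta)$) as the main work, and derive items (1)--(4) from it by compactness.

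\emph{Step 1: equicontinuity.} For fixed $m$, the maps $\phi^m_\theta$ are extensions of $K$-quasiconformal maps of the plane, since $\hat{Z}^m_\theta$ is a $K$-quasidisk by Theorem~\ref{thm:pseudo-siegel-bounds} and its proof. Together with the normalization, this makes the family $\{\phi^m_\theta\}_{\theta\in\Theta_{\rm bdd}}$ equicontinuous on $\RS\backslash\D$. Here $\operatorname{diam}\hat Z^m_\theta$ is bounded below because $c_0$ and $0$ are at definite distance.

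\emph{Step 2: continuity in $\overline{\Theta}$.} This is the key step and the main obstacle. It requires the construction to be made \emph{combinatorially continuous}: if $\theta,\theta'$ share the first $N\gg m$ symbols in $\overline\Theta$, then $\hat Z^m_\theta$ and $\hat Z^m_{\theta'}$ should be close. The difficulty is that the dams of level $\le m$ depend only on $a_1,\dots,a_{m+2}$ through the regime choice (bounded versus near-parabolic, with threshold $\Mbold$). A symbol equal to $\infty$ corresponds to a parabolic limit, and near it the regime may switch as $a_{m+2}$ crosses $\Mbold$; this creates a discontinuity. I would fix this by interpolating the dams: replace the sharp threshold by dams whose endpoints move continuously, for example by shrinking $\Mbold'$-offsets as $a_{m+2}\to\Mbold$ until the dam degenerates onto $\partial Z$. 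I would then verify that the collar moduli $\boldsymbol\delta$ and the quasidisk constant stay uniform, reusing the tiling estimates $W^\pm_3\le K'$. Closeness of Siegel boundaries for close combinatorics then follows from Theorem~\ref{thm:uniform-external} and the real a priori bounds: the levels deeper than $N$ contribute only a small Hausdorff perturbation, because combinatorial intervals of level $N$ have uniformly small diameter. Combined with equicontinuity, this gives uniform continuity.

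\emph{Step 3: consequences.} By uniform continuity, $\phi^m_\theta$ extends to all $\theta\in\Theta$, viewed inside $\overline\Theta$. Almost invariance under $f^{q_{m+1}}$ is a closed condition expressed through the collars, so it persists in the limit; this gives (1). The intersection $\bigcap_m\hat Z^m_\theta$ is full and compact. It is invariant because the sets $\hat Z^m$ are almost invariant with collars shrinking in $m$, and in the bounded-type case it equals $\overline{Z_\theta}$. Hausdorff limits of $f$-invariant sets on which $f$ is injective stay so, which gives the homeomorphism property and (2). Uniform continuity of the intersection follows from the uniform bound $\operatorname{dist}_H(\hat Z^m,\hat Z^{m+1})\to0$ in $m$ at a rate independent of $\theta$.

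For (3), the ray $\phi^m_\theta((1,\infty))$ lands at $c_0$ for every $m$, with uniform quasiconformal control. Its limit as $m\to\infty$ gives an access to $c_0$. Connectedness of $H\backslash\{c_0\}$ follows because, for every $m$, removing $c_0$ from the quasidisk $\hat Z^m$ leaves it connected uniformly, with no pinching at $c_0$ since $c_0$ is never a dam endpoint collapse. Finally, (4) follows from Carathéodory kernel convergence applied to $\phi^m_\theta\to\phi_\theta$ uniformly in $\theta$.
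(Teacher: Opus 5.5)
The paper does not prove this statement; it is quoted from \cite{DL26}. Your outline therefore has to stand on its own, and Steps~1 and~2 have genuine gaps.

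\emph{Step 1.} Theorem~\ref{thm:pseudo-siegel-bounds} makes only $\hat Z_\theta=\hat Z^{-1}_\theta$ a uniform quasidisk. For $m\ge 1$ the fjords of levels $<m$ stay unfilled. For example, $\hat Z^1_\theta=\overline{Z_\theta}$ for $\theta=[0;2,a_2,1,1,\dots]$. As $a_2\to\infty$ these Siegel disks pinch into two lobes joined by a vanishing neck, with the near-parabolic $2$-cycle sitting in the fjords, so no uniform quasidisk constant exists. Equicontinuity of $\{\phi^m_\theta\}_\theta$, which is a uniform local connectivity statement, is therefore part of what must be proved rather than an input.

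\emph{Step 2, the obstacle you name.} It does not occur. In $\overline\Theta$ finite symbols are isolated points, and a neighbourhood of $\infty$ contains only symbols $\ge M$ with $M$ arbitrarily large. So near any point each of the first $N$ symbols is either fixed or exceeds $\Mbold$. The regime at every fixed level is then locally constant, and interpolating dams solves nothing.

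\emph{Step 2, the real gap.} The actual content sits in your sentence that closeness ``follows from Theorem~\ref{thm:uniform-external} and the real a priori bounds.'' Those are estimates for a single map. They give precompactness, but nothing forces two \emph{different} maps with the same first $N$ symbols to have close $\hat Z^m$, i.e.\ they say nothing about uniqueness of subsequential limits.

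The decisive case is $\theta_k\to\theta_\infty$ with some early symbol tending to $\infty$:
\begin{itemize}
\item $f_{\theta_k}$ tends to a parabolic map;
\item the Siegel disks degenerate, with conformal radii at $0$ tending to $0$;
\item high iterates converge to Lavaurs maps whose phase is dictated by the tail, which is exactly what the topology of $\overline\Theta$ records.
\end{itemize}
Identifying the limit requires a characterization of $\hat Z^m$ that survives this parabolic implosion. The remark after the statement indicates that the limits are pseudo-Siegel disks almost invariant under Lavaurs towers. Your outline contains no mechanism of this kind.

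Step~3 has further holes even if Step~2 is granted.
\begin{itemize}
\item \emph{Injectivity.} It does not pass to Hausdorff limits. Injectivity on $X$ means $X\cap\iota(X)\subset\{c_0\}$ for the deck involution $\iota$ of $f_\theta$, and sets touching at one point can converge to sets touching along more. You need uniform separation from $\iota(\hat Z)$ away from $c_0$, such as the cone control of Lemma~\ref{lem:main-bubble}.
\item \emph{Connectedness of $H_\theta\setminus\{c_0\}$.} A decreasing intersection of non-compact connected sets need not be connected. So connectedness of each $\hat Z^m_\theta\setminus\{c_0\}$ says nothing about $H_\theta\setminus\{c_0\}$. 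Structural input is needed, e.g.\ the internal-ray description of $H_\theta$.
\item \emph{Accessibility.} A limit of the rays $\phi^m_\theta((1,\infty))$ may touch $H_\theta$ away from $c_0$. The ``quasiconformal control uniform in $m$'' you invoke does not exist: $\bigcap_m\hat Z^m_\theta=\overline{Z_\theta}$, and the example above shows these are not uniform quasidisks.
\item \emph{Item (4).} Carath\'eodory kernel convergence says nothing about the boundary normalization $1\mapsto c_0$. That requires (3) with bounds uniform in $\theta$.
\end{itemize}
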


Note that (3) guarantees that the Riemann mapping in (4) is uniquely defined.

\begin{remark}
    The theorem actually says more. The Mother Hedgehog $H_\theta$ is star-like, that is, it can be presented as a bouquet of uncountably many closed quasiarcs, called internal rays, centered at the fixed point $0$. The map $f_\theta$ permutes the internal rays very much like rotation by angle $\theta$ about $0$. Moreover, for $\theta \in \overline{\Theta}\backslash \Theta$, there exist pseudo-Siegel disks almost invariant under a tower of Lavaurs maps, i.e. enrichment of parabolic maps $f_{\theta'}$, $\theta' \in \Q$. In that case, the intersection of the pseudo-Siegel disks is a Mother Hedgehog completely invariant under the Lavaurs tower.
\end{remark}

In the rest of the paper, we will only discuss bounded-type quadratic Siegel disks. However, by virtue of the uniform continuity in Theorem \ref{thm:uniform-continuity}, all the results in this paper extend to all quadratic polynomials with an irrationally indifferent fixed point. Any statement regarding bounded-type Siegel disks can be replaced with the Mother Hedgehogs, and the definition of bubbles can be replaced with \emph{hoglets}, i.e. iterated preimages of Mother Hedgehogs. 

\section{The external map}
\label{sec:external}  
In this section, we translate the pseudo-Siegel bounds of $f_\theta$ near $\overline Z_\theta$ into the external coordinate $f_\theta\leadsto \fext$ defined in~\eqref{eq:dfn:f_ext}; cf, \cite[{\S11.0.2 and Remark 11.8}]{DL22}.

Let us fix $\theta \in \Theta_{\text{bdd}}$ and we will assume $0 < \theta < \frac{1}{2}$ without loss of generality. 
Denote the map $f=f_\theta$, the Siegel disk $Z=Z_\theta$, the pseudo-Siegel disks $\hat{Z}^m = \hat{Z}^m_\theta$ for $m \geq -1$, and the critical point $c_0=c_0(f_\theta)$. 

Let us fix the normalized Riemann mapping 
\[
    \psi: \RS \backslash Z \to \RS \backslash \overline{\D}, \quad \psi(\infty)=\infty \text{ and } \psi(c_0) = 1.
\]
Denote by $B^0$ the closure of $f^{-1}(Z)\backslash Z$ and denote 
\[
\Bub^0 := \psi(B^0).
\]
The \emph{external map} of $f$ is the map
\begin{equation}
    \label{eq:dfn:f_ext}
    \fext : \RS \backslash \left( \overline{\D} \cup \Bub^0 \right) \to \RS \backslash \overline{\D}, \qquad \fext:= \psi \circ f_\theta \circ \psi^{-1}.
\end{equation}
It is a holomorphic double covering map branched only at infinity. 

\begin{lemma}
    The map $\psi$ uniquely extends to a homeomorphism from the boundary of the Siegel disk $Z$ of $f$ to the unit circle $\partial \D$. 
    The external map $\fext$ extends to a $\partial \D$-symmetric holomorphic map that is well-defined and holomorphic everywhere away from $\Bub^0$ and its reflection along $\partial \D$.
\end{lemma}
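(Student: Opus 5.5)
The plan is to treat the two claims in turn: first boundary extension of $\psi$, then Schwarz reflection for $\fext$.

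\emph{Extension of $\psi$.} Since $\theta$ is of bounded type, Douady--Ghys surgery gives that $\partial Z$ is a quasicircle, in particular a Jordan curve. By Carathéodory's theorem, the Riemann map $\psi:\RS\backslash \overline{Z}\to\RS\backslash\overline{\D}$ extends to a homeomorphism of closures, so it restricts to a homeomorphism $\partial Z\to\partial\D$. Uniqueness is automatic because $\partial Z$ lies in the closure of the domain. (In fact, since $\partial Z$ is a quasicircle, $\psi$ even extends to a global quasiconformal map, but we will not need this.)

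\emph{Extension of $\fext$ across $\partial\D$.} Let $V$ be a small annular neighbourhood of $\partial Z$ with $V\cap B^0$ contained in a small neighbourhood of $c_0$. Note that $\partial Z\cap B^0=\{c_0\}$, because $f^{-1}(\partial Z)$ consists of $\partial Z$ together with $\partial B^0$, and these touch only at the critical point. On $V\backslash(\overline Z\cup B^0)$, the map $f$ sends points into $\C\backslash\overline Z$. Indeed, $f^{-1}(\overline Z)=\overline Z\cup B^0$. Hence $\fext$ is defined there, and
\[
|\fext(w)|\to 1 \quad\text{as } |w|\to1^+,
\]
because $f(\partial Z)=\partial Z$, $\psi$ is a homeomorphism up to the boundary, and $f$ is continuous.

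Since $f|_{\partial Z}$ is a homeomorphism, $\fext$ extends continuously to $\partial\D\backslash\{1\}$ with real boundary values on the circle; more precisely, it maps $\partial \D$ into $\partial\D$. By the Schwarz reflection principle applied to $w\mapsto \fext(w)$ on a one-sided neighbourhood of an arc of $\partial\D$, we get an analytic continuation
\[
\fext(w):=1/\overline{\fext(1/\bar w)}
\]
across every open arc of $\partial\D$ avoiding the point $1=\psi(c_0)$. The set where the reflected definition is needed is exactly the reflection of $\RS\backslash(\overline\D\cup\Bub^0)$. So the extended map is holomorphic on $\RS$ minus $\Bub^0$ and its reflection $\Bub^{0*}$, and it is symmetric by construction.

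\emph{The point $1$ and gluing.} One should still check that the definition is consistent at $w=1$. Here $1\in\Bub^0$, since $c_0\in B^0$, so this point is excluded from the domain of holomorphy anyway. Away from $\Bub^0\cup\Bub^{0*}$, the original map and its reflection agree on $\partial\D$, so they glue by the reflection principle (a function continuous across an analytic arc and holomorphic on both sides is holomorphic). The main subtle step is precisely this identification $\partial Z\cap B^0=\{c_0\}$, which guarantees that every boundary point other than $1$ has a one-sided neighbourhood on which $\fext$ is defined. This follows from $f$ being a local homeomorphism at every point of $\partial Z\backslash\{c_0\}$ and mapping $\partial Z$ onto itself injectively, so the two local preimage sheets of $\overline Z$ near such points are $Z$ itself and a set disjoint from $\partial Z$.
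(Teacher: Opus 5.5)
Your proposal is correct and follows the same route as the paper. You use Carath\'eodory's theorem for the Jordan domain $\RS\backslash\overline{Z}$ to get the boundary homeomorphism, and Schwarz reflection across $\partial\D$ to get the symmetric extension. You also fill in a detail the paper leaves implicit: $B^0\cap\partial Z=\{c_0\}$, so $\Bub^0$ meets $\partial\D$ only at $1$, and reflection applies along every arc of $\partial\D\backslash\{1\}$.
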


\begin{proof}
    The first claim follows from the fact that $Z$ is a Jordan disk.
    The second claim follows from Schwarz reflection.
\end{proof}

This elementary lemma gives rise to a unique ``critical'' orbit 
\[
    \left\{ \crit_j \equiv \crit_j^{\ext} := \psi(c_j) \right\}_{j \in \Z}
\]
on the unit circle $\partial \D$.

We will equip $\C \backslash \overline{\D}$ with the complete conformal hyperbolic metric. 
The unit circle $\partial \D$ will be equipped with the Euclidean metric induced by the identification $\R/\Z \to \partial \D, x \mapsto e^{2\pi i x}$. 
With respect to this metric, the length of any interval $I \subset \partial \D$ will be denoted by $|I|$. Note that the Euclidean metric is different than the combinatorial metric, which is the unique normalized $f_{\text{ext}}$-invariant metric on $\partial \D$.

Throughout the rest of this paper, objects living in external coordinates will be written in sans-serif, e.g. $\crit_j$, $\Bub^0$, etc.

\subsection{Real a priori bounds}
\label{ss:real-bounds}

For $n \geq 0$, denote by $p_n/q_n = [0;a_1,\ldots,a_n]$ the $n$\textsuperscript{th} best rational approximation of $\theta = [0;a_1,a_2,a_3,\ldots]$. 

Consider the tiling $\mathcal{D}_n$ of the circle $\partial \D$ obtained by removing the set of pre-critical points $\{\crit_{-j}\}_{j=0,1,\ldots ,q_{n+1}-1}$; it is essentially $\mathfrak{D}_n$ in external coordinates.
More explicitly,
\[
    \mathcal{D}_n = \{ [\crit_{-j}, \crit_{-q_n-j}]\}_{0\leq j \leq q_{n+1}-q_{n}-1} \cup \{ [\crit_{-q_{n+1}+q_n-j}, \crit_{-j}]\}_{0\leq j \leq q_n-1}.
\]
Every tile in $\mathcal{D}_n$ has combinatorial length either $l_n$ or $l_n + l_{n+1}$.
For every $n\geq 0$, $\mathcal{D}_{n+1}$ is a refinement of $\mathcal{D}_n$, and every tile in $\mathcal{D}_n$ decomposes into at least two tiles in $\mathcal{D}_{n+2}$.

\begin{theorem}[Real a priori bounds]
\label{thm:R-APB}
    There exists a universal constant $K>1$ such that for all $n \geq 0$,
    \begin{enumerate}[label = \textnormal{(\arabic*)}]
        \item if $I$ and $J$ are two adjacent tiles in $\mathcal{D}_n$, then $K^{-1}|J| \leq |I| \leq K|J|$;
        \item if $I \in \mathcal{D}_n$ contains $J \in \mathcal{D}_{n+1}$ and $I\backslash J$ is connected, then $|I| \leq K|J|$.
    \end{enumerate}
    In particular, the maximal diameter of tiles in $\mathcal{D}_n$ tends to zero uniformly exponentially fast as $n \to \infty$.
\end{theorem}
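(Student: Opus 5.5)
We plan to derive Theorem~\ref{thm:R-APB} from the pseudo-Siegel bounds by passing through $\psi$ and using the tiling $\mathcal{T}(\hat Z)$ of $\partial\hat Z$, which has bounded combinatorics and bounded outer geometry. The guiding principle is that, for a Jordan domain $\Omega=\RS\setminus\overline{\D}$, the Euclidean lengths of boundary arcs are comparable exactly when certain extremal widths in $\C\setminus\overline{\D}$ are bounded. Concretely, for intervals $I,J\subset\partial\D$, the ratio $|I|/|J|$ of adjacent arcs is controlled by the widths of the curve families in $\C\setminus\overline\D$ joining $I$ to $\partial\D\setminus(L\cup I\cup R)$ and vice versa. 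These widths are conformal invariants, so we may compute them in the $f$-plane in $\RS\setminus\overline Z$.

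\emph{Step 1.} For a tile $I\in\mathcal{D}_n$, let $\psi^{-1}(I)\subset\partial Z$ be the corresponding tile of $\mathfrak{D}_n$. We bound $W^+_3$ of $\psi^{-1}(I)$ (relative to its neighbouring tiles) from above. Since tiles of $\mathfrak{D}_n$ have combinatorial length $l_n$ or $l_n+l_{n+1}$, each is a union of boundedly many combinatorial intervals of levels $n$ and $n+1$. Theorem~\ref{thm:uniform-external}, together with subadditivity of width (Appendix~\ref{sec:appendix}), then gives $W^+(\psi^{-1}(I))\le K$, provided we can handle the endpoints.

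\emph{Step 2.} We need a lower bound on the width of curves separating $I$ from far-away parts of the circle, that is, a non-degeneracy of the "inner" direction. In external coordinates the disk $\overline\D$ is round, so a lower bound is automatic: by reflection symmetry, the family joining $I$ to $\partial\D\setminus(L\cup I\cup R)$ in $\C\setminus\overline\D$ has width comparable to a universal function of the cross-ratio of the four endpoints. Hence an upper bound on $W^+_3(I)$ forces $|I|$ to be not much smaller than $|L|+|R|$. Applying this to $L$ and $R$ in turn gives (1) with a universal $K$. For (2), a tile $J\in\mathcal{D}_{n+1}$ sits at an end of $I\in\mathcal{D}_n$, and the neighbour of $J$ on the other side lies outside $I$. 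Using (1) at level $n+1$ and the bounded number of level-$(n+1)$ tiles that are comparable across this boundary, $|I|\le K|J|$ should follow. The case of many level-$(n+1)$ tiles inside $I$ (large $a_{n+2}$) is harmless, because $J$ is the extreme tile and only its comparability to the adjacent tile of $I$ is needed.

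\emph{Main obstacle.} The key difficulty is that Theorem~\ref{thm:uniform-external} is stated for combinatorial intervals with their combinatorial neighbours, whereas tiles of $\mathcal{D}_n$ are adjacent tiles and combinatorial lengths vary by the factor $a_{n+2}$. When $a_{n+2}$ is huge, level $n+1$ subdivisions are very uneven. We would handle this by working only with families of the form $W^+_3$ of level-$n$ combinatorial intervals, which are uniformly bounded, and using the monotonicity of width under enlarging targets. Failing that, we would instead use the bounded outer geometry of $\mathcal{T}(\hat Z)$ transported through $\psi$ (Corollary~\ref{cor:new-psi}-type qc control outside $\hat Z$). Finally, the exponential decay of tile diameters follows from (2) applied twice: every tile of $\mathcal{D}_n$ splits into at least two tiles of $\mathcal{D}_{n+2}$, each of definite proportion, so the maximal diameter shrinks by a definite factor every two levels.
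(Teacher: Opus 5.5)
\textbf{Part (1).} Your argument is essentially sound. Step 1 does bound the width from a tile $I\in\mathcal{D}_n$ to $\partial\D\setminus(L\cup I\cup R)$: the level-$n$ and level-$(n+1)$ pieces of $I$ have their combinatorial neighbours inside $L\cup I\cup R$, so Theorem~\ref{thm:uniform-external}, monotonicity and the Parallel Law apply. However, Step 2 states the consequence backwards. In the round exterior, an upper bound on $W^+_3(I)$ forces $|I|$ to be not much \emph{larger} than $\min\{|L|,|R|\}$ (when $I$ is not most of the circle). Comparability then comes from applying this to $I$ and to each of its neighbours. The paper does the same thing more directly: it splits a long tile into pieces of combinatorial lengths $l_{n+1}$ and $l_n$ and compares same-level combinatorial intervals.

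\textbf{Part (2): the genuine gap.} You claim large $a_{n+2}$ is harmless because only the comparability of the extreme tile $J$ with its neighbour is needed. But comparability of $J$ with a neighbour, whether outside $I$ or inside it, gives no information on $|I|/|J|$. Write $I=J_1\cup\dots\cup J_k$ with $k\in\{a_{n+2},a_{n+2}+1\}$. Comparability of adjacent intervals at every level is all that the $W^+_3$ bounds yield in $\C\setminus\overline{\D}$. That is consistent with $|J_i|$ growing geometrically toward the middle of $I$, like $K^{\min\{i,k-i\}}|J_1|$. In that case the middle tiles carry almost all of $|I|$, and $|J_1|/|I|$ is exponentially small in $a_{n+2}$. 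So the near-parabolic case is exactly the content of (2), and it needs a separate input controlling the middle of $I$.

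The paper uses \cite[Lemma 10.5]{DL22}: the width of curves in $\C\setminus\overline{\D}$ joining the two extreme tiles $J_1$ and $J_k$ is bounded below by a universal constant. By the Log-Rule this gives $\min\{|J_1|,|J_k|\}\ge K'\sum_{2\le i\le k-1}|J_i|$. Combining this with (1) for the pairs $J_1,J_2$ and $J_{k-1},J_k$ yields $|I|\le K\min\{|J_1|,|J_k|\}$.

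Your fallback via the outer geometry of $\mathcal{T}(\hat Z)$ is only gestured at, and you would still have to extract from it a lower bound of this kind. Note also that in the paper the transfer through $\psi$ (Lemma~\ref{lem:ext-pseudo-siegel} and Corollary~\ref{cor:new-psi}) is itself derived from part (1) of this theorem. The exponential decay of tile diameters does follow from (2) as you indicate, but only once (2) is actually established.
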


\begin{proof}
    Consider two neighboring intervals $I$ and $J$ in $\mathcal{D}_n$. If both are combinatorial intervals of the same level, then (1) follows from Theorem \ref{thm:uniform-external}. Else, one of them, say $I$, has combinatorial length $l_n+l_{n+1}$ and $J$ has length $l_n$. Split $I$ into intervals $I_1$ and $I_2$ of combinatorial lengths $l_{n+1}$ and $l_n$ respectively where $I_2$ is touching $J$. Let $L$ be the unique level $n$ interval containing $I_1$ that is disjoint from $I_2$. Again, by Theorem \ref{thm:uniform-external}, we have $|I_1| \leq |L| \asymp |I_2| \asymp |J|$, which implies (1).

    Next, let us prove item (2). Pick $I \in \mathcal{D}_n$. 
    Let $J_1,\ldots, J_{k}$ be the sequence of tiles in $\mathcal{D}_{n+1}$ that are contained in $I$, labelled in consecutive order. 
    The total number $k$ is either $a_{n+2}$ or $a_{n+2}+1$.
    If $k \leq 2$, it is immediate from item (1) that for $1 \leq i \leq k$, $|J_i|$ is uniformly comparable to $|J|$.
    In the general case, \cite[Lemma 10.5]{DL22} (as part of the proof of Theorem \ref{thm:uniform-external}) states that the width of curves in $\C \backslash \overline{\D}$ that start from $J_1$ and end at $J_k$ is uniformly bounded from below. Therefore, there is a universal constant $K'>0$ such that
    \[
    \min\{|J_1|,|J_k|\} \geq K' \sum_{2\leq i \leq k-1} |J_i|.
    \]
    Together with item (1) applied to $J_1,J_2$ and $J_{k-1}, J_k$, we have the estimate $|I| \leq K \min\{|J_1|,|J_k|\}$ for some universal constant $K>1$.
\end{proof}

This result hints at the similarity between $\fext$ and critical circle maps. 
For critical circle maps, real a priori bounds were established by Herman and \'Swi\k{a}tek \cite{H87,Sw87} and such bounds are the foundation of the renormalization theory of critical circle maps as well as bounded-type quadratic Siegel disks.

\subsection{Regularity of $\psi$}

For $m \geq -1$, denote 
\[
    \hat{\mathsf{Y}}^m := \psi(\partial \hat{Z}^{m})
    \quad \text{ and } \quad
    \hat{\mathsf{Y}} := \hat{\mathsf{Y}}^{-1}.
\]

\begin{lemma}
\label{lem:ext-pseudo-siegel}
    There exists a universal constant $K>1$ such that for all $m \geq -1$, $\hat{\mathsf{Y}}^m$ is a $K$-quasicircle.
\end{lemma}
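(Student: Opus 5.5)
Since $\hat{\mathsf{Y}}^m=\psi(\partial\hat Z^m)$ and $\psi$ is conformal only on $\RS\setminus\overline Z$, we cannot simply push forward the $K$-quasicircle $\partial\hat Z^m$ from Theorem~\ref{thm:pseudo-siegel-bounds}. Instead, we verify a geometric criterion for $\hat{\mathsf{Y}}^m$ directly in external coordinates, using the tiling $\mathcal{T}_m(\hat Z)$ from \S\ref{ss:pseudo-siegel}. The natural criterion is the extremal-width characterization of quasicircles: a Jordan curve $\Gamma$ carrying a nested family of tilings with bounded combinatorics is a $K$-quasicircle, with $K$ depending only on the combinatorial constant, provided that for every tile $I$ both $W^-_3(I)$ and $W^+_3(I)$ (widths of curves in the two complementary components from $I$ to $\Gamma\setminus(L_I\cup I\cup R_I)$) are bounded by a universal constant. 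Equivalently, one can check Ahlfors' three-point condition via moduli of annuli separating consecutive tiles. I would isolate this criterion as a preliminary statement (it is standard, cf.\ the appendix on extremal width and \cite[\S11]{DL22}) and then check its hypotheses for $\Gamma=\hat{\mathsf{Y}}^m$ with tiles $\psi(\mathcal{T}_k(\hat Z^m))$.

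First I need the tiling on $\partial\hat Z^m$ rather than only on $\partial\hat Z=\partial\hat Z^{-1}$. I would note that $\hat Z^m$ is itself a pseudo-Siegel disk built by the same recipe, with all dams of levels $<m$ removed. The construction in \cite[\S11]{DL22} therefore supplies $\mathcal{T}(\hat Z^m)$ with the same three properties and the same constant $K'$. Bounded combinatorics transfers verbatim under $\psi$.

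Next come the widths. The outer family $\mathcal{F}^+_3(I)$ lives in $\RS\setminus\hat Z^m\subset\RS\setminus\overline Z$, where $\psi$ is conformal. Its image is exactly the outer family for $\psi(I)$, so conformal invariance of extremal width gives $W^+_3(\psi(I))=W^+_3(I)\le K'$.

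The inner side is the main obstacle. The inner complementary domain of $\hat{\mathsf{Y}}^m$ is $\overline{\D}\cup\psi(\hat Z^m\setminus\overline Z)$, and $\psi$ is undefined on $Z$. To handle it I would reflect: define $\tau(z)=1/\bar z$, so that $\hat{\mathsf{Y}}^m\cup\tau(\hat{\mathsf{Y}}^m)$ is symmetric about $\partial\D$. A curve in the inner domain from $\psi(I)$ to the far tiles either stays in the fjords $\psi(\hat Z^m\setminus\overline Z)$ or crosses $\D$. By the symmetry principle, the family restricted to $\D$ is controlled by the width in $\C\setminus\overline\D$ between the corresponding arcs of $\partial\D$. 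I would bound these crossings by outer widths of the arcs of $\partial\D$ cut by the dams, and these in turn by Theorem~\ref{thm:uniform-external} together with real a priori bounds (Theorem~\ref{thm:R-APB}): such arcs are unions of boundedly many tiles of $\mathcal{D}_n$ after regularization. The portions inside the fjords lie in $\RS\setminus\overline Z$, so $\psi$ again preserves their width, and the bounded inner geometry $W^-_3\le K'$ in the $f$-plane bounds the fjord-contained part.

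Combining the two contributions via the parallel/series rules gives $W^-_3(\psi(I))\le K''$. The quasicircle criterion then yields a universal $K$. I expect the delicate point to be making the splitting of inner curves into ``fjord'' and ``disk-crossing'' pieces rigorous: one must show that crossing through $\D$ cannot create a wide shortcut between non-adjacent tiles. I would establish this using Theorem~\ref{thm:uniform-external} for the corresponding $\partial\D$-arcs.
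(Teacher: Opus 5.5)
Your proposal has a genuine gap where you import the tiling. You claim that $\hat Z^m$ is ``a pseudo-Siegel disk built by the same recipe'' and therefore carries $\mathcal{T}(\hat Z^m)$ with bounded combinatorics and bounded inner and outer geometry, with the same $K'$. This is false for $m\ge 0$.

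The disk $\hat Z^m$ fills only the fjords of levels $\ge m$. The fjords of levels $k<m$ with $a_{k+2}\ge\Mbold$ stay open, and in the dynamical plane of $f$ these are exactly what destroys uniform geometry. For example, for an EGM $\theta$ one has $\hat Z^m=\overline Z$ for all large $m$, and $\partial Z_\theta$ is not a uniform quasicircle: its dilatation degenerates as the early partial quotients grow. By the very criterion you invoke, a tiling of $\partial\hat Z^m$ with uniformly bounded combinatorics and $W^\pm_3$ would make $\partial\hat Z^m$ a uniform quasicircle in the $f$-plane. So no such tiling exists; only $\hat Z=\hat Z^{-1}$ has one.

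Two of your estimates therefore lose their justification. The outer bound $W^+_3(\psi(I))\le K'$ rests on this claim as you source it. More seriously, so does your control of the fjord-contained inner curves via ``$W^-_3\le K'$ in the $f$-plane''. In addition, the disk-crossing part of the inner family, which you yourself flag as delicate, is only sketched, and it is the heart of the inner estimate.

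The missing observation is that the lemma is purely a statement in external coordinates and needs nothing from the interior of $\hat Z^m$. Under $\psi$ the curve $\partial Z$ is straightened to $\partial\D$, so the unfilled lower-level fjords simply disappear. Then $\hat{\mathsf{Y}}^m$ is the outer boundary of $\partial\D$ together with the dams of levels $k\ge m$ with $a_{k+2}\ge\Mbold$.

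Each such dam is a hyperbolic geodesic of $\C\backslash\overline{\D}$, i.e.\ a circular arc orthogonal to $\partial\D$. By Theorem~\ref{thm:R-APB}, its endpoints cut its tile $I\in\mathcal{D}_k$ into three arcs of comparable Euclidean length. Thus $\hat{\mathsf{Y}}^m$ is an explicit concatenation of circle arcs with uniformly controlled proportions. The three-point condition $\textnormal{diam}[x,y]\le C|x-y|$ then follows from elementary geometry: first for EGM $\theta$, where only finitely many levels carry dams, and then by continuity.

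This is the paper's proof. Its only dynamical input is the real a priori bounds, which themselves come from Theorem~\ref{thm:uniform-external}.
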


\begin{proof}
    Below, we will only prove the lemma for EGM rotation number $\theta$. The general case follows from continuity.
    
    Let us recall the construction of $\hat{\mathsf{Y}}^m$. For all sufficiently high $m$, $\hat{\mathsf{Y}}^m$ coincides with the unit circle $\partial \D$. In general, there are two cases for every $m \geq -1$.
    If $a_{m+2}< \Mbold$, then $\hat{\mathsf{Y}}^m$ is set to be equal to $\hat{\mathsf{Y}}^{m+1}$. If $a_{m+2} \geq \Mbold$, then we set $\hat{\mathsf{Y}}^m$ to be the external boundary of $\hat{\mathsf{Y}}^{m+1} \cup \bigcup_{I \in \mathcal{D}_m} \mathsf{b}_I$, where each $\mathsf{b}_I$ is a hyperbolic geodesic of $\C \backslash \overline{\D}$ whose endpoints are contained in $I$ and partition $I$ into three subintervals of comparable Euclidean length.
    
    Pick any two distinct points $x$ and $y$ on $\hat{\mathsf{Y}}^m$ such that $|\text{arg}(x)-\text{arg}(y)|< \frac{\pi}{2}$. 
    By Theorem \ref{thm:R-APB} (1) and some elementary trigonometry, the shortest arc $[x,y]$ in $\partial \hat{\mathsf{Y}}^m$ with endpoints $x$ and $y$ satisfies
    \[
        \text{diam}[x,y] \leq C |x-y|
    \]
    for some universal constant $C>1$. 
    This implies that $\hat{\mathsf{Y}}^m$ is indeed a uniform quasicircle.
\end{proof}

\begin{corollary}
\label{cor:new-psi}
    There exists a universal constant $K>1$ such that the univalent map $\psi : \RS \backslash \hat{Z} \to \C$ extends to a $K$-quasiconformal homeomorphism $\psi_{\textnormal{qc}} : \RS \to \RS$.
\end{corollary}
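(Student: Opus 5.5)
The plan is to build the extension in two pieces, glue them along $\partial\hat Z$, and use the fact that a homeomorphism which is quasiconformal off a quasicircle is globally quasiconformal. Outside $\hat Z$ we keep $\psi$. Inside $\hat Z$ we need a $K$-qc homeomorphism onto the bounded complementary component of $\hat{\mathsf{Y}}=\psi(\partial\hat Z)$ that agrees with $\psi$ on $\partial\hat Z$. Note that $\psi$ is conformal on $\RS\backslash Z\supset\RS\backslash\hat Z$, so it is certainly univalent there, and it extends homeomorphically to $\partial\hat Z$ because $\partial\hat Z$ is a Jordan curve.

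\textbf{Step 1: the quasicircles.} By Theorem~\ref{thm:pseudo-siegel-bounds}, $\partial\hat Z$ is a $K$-quasicircle. By Lemma~\ref{lem:ext-pseudo-siegel}, $\hat{\mathsf{Y}}$ is a $K$-quasicircle.

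\textbf{Step 2: the boundary correspondence as a quasisymmetry.} Fix Riemann maps $\phi_1:\RS\backslash\hat Z\to\RS\backslash\overline\D$ and $\phi_2:\RS\backslash\hat{\mathsf{Y}}_{\mathrm{int}}\to\RS\backslash\overline\D$, both fixing $\infty$, where $\hat{\mathsf{Y}}_{\mathrm{int}}$ is the closed region bounded by $\hat{\mathsf{Y}}$. Then $h:=\phi_2\circ\psi\circ\phi_1^{-1}$ is a conformal automorphism of $\RS\backslash\overline\D$ fixing $\infty$, hence a rotation. Since both domains are exteriors of $K$-quasicircles, $\phi_1$ and $\phi_2$ extend to $K_1$-qc homeomorphisms of $\RS$, with $K_1=K_1(K)$, by the standard extension of Riemann maps of quasidisks, e.g.\ via reflection across a quasicircle.

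\textbf{Step 3: the global map.} Define
\[
\psi_{\mathrm{qc}}:=\phi_2^{-1}\circ h\circ\phi_1
\]
using these global qc extensions. On $\RS\backslash\hat Z$ this composition equals $\psi$. It is a homeomorphism of $\RS$ with dilatation at most $K_1^2$, since $h$ is a rotation.

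The main subtlety is uniformity: $K_1$ must depend only on the quasicircle constant, and that constant must itself be universal. Theorem~\ref{thm:pseudo-siegel-bounds} and Lemma~\ref{lem:ext-pseudo-siegel} supply exactly this. One should also check normalization, but any Möbius normalization does not affect dilatation, so the extension constant is universal.
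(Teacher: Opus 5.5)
Your proposal is correct and is exactly the ``standard quasiconformal interpolation'' the paper invokes after citing Theorem~\ref{thm:pseudo-siegel-bounds} and Lemma~\ref{lem:ext-pseudo-siegel}. You make it explicit by extending the two exterior Riemann maps across the uniform quasicircles $\partial\hat Z$ and $\hat{\mathsf{Y}}$ (e.g.\ via quasiconformal reflection) and composing them with the intermediate rotation, so the dilatation depends only on the universal quasicircle constant (and the removability remark in your opening is not even needed, since the composition is globally quasiconformal by construction).
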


\begin{proof}
    By Theorem \ref{thm:pseudo-siegel-bounds} and Lemma \ref{lem:ext-pseudo-siegel}, both $\partial \hat{Z}$ and $\hat{\mathsf{Y}}$ are uniform quasicircles. 
    The claim then follows from standard quasiconformal interpolation.
\end{proof}

\subsection{Bubbles and pseudo-bubbles}
\label{ss:bubbles-and-pseudo-bubbles}

From now on, let us work in the dynamical plane of $\fext$. 
The \emph{bubble} of $\fext$ of generation $0$ is $\Bub^0$. 
It is contained in a \emph{pseudo-bubble} $\hat{\Bub}^0 = \psi(\hat{B}^0)$, which consist of $\Bub^0$ together with a collection of dams filled in. 
Each dam on the boundary of $\Bub^0$ is protected by definite collars (Section  \ref{ss:pseudo-siegel}).
 
The image of the boundary of $\hat{\Bub}^0$ under $\fext$ is $\hat{\mathsf{Y}}$.

A \emph{bubble} $\Bub$ of $\fext$ of generation $g>0$ is a connected component of $\fext^{-g}(\Bub^0)$, and it is contained in a \emph{pseudo-bubble} $\hat{\Bub}$, which is the unique connected component of $\fext^{-g}\left(\hat{\Bub}^0\right)$ containing $B$. 
The \emph{root} of a (pseudo-)bubble of generation $g\geq 0$ is the unique point $\crit$ in it such that $\fext^{g}(\crit)=\crit_0$. 

Let us equip $\C \backslash \overline{\D}$ with the hyperbolic metric. 
For any $k \in \N$, let us denote by $\Gamma_k$ the infinite radial ray in $\C \backslash \overline{\D}$ landing at $\crit_{-k}$. 
Given $K>0$, we consider the ``hyperbolic cone''
\[
    \mathsf{C}_k(K) := \{ x \in \C \backslash \overline{\D} \: : \: \dist_{\text{hyp}}(x, \Gamma_k) \leq K\} \cup \{\crit_{-k}\}.
\]

\begin{lemma}
\label{lem:main-bubble}
    There exists a universal $K\geq 1$ such that the pseudo-bubble $\hat{\Bub}^0$ is a $K$-quasicircle with diameter
    \[
        \frac{1}{K} \leq \textnormal{diam}(\hat{\Bub}^0) \leq K
    \]
    and $\hat{\Bub}^0$ is contained in the cone $\mathsf{C}_0(K)$.
\end{lemma}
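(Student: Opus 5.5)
Since $\hat{\Bub}^0$ is the lift of $\hat{\mathsf Y}$-bounded region under $\fext$, I will pass through the plane of $f$ and use that $\hat{B}^0$ is a univalent preimage of $\hat Z$. In the $f$-plane, $f$ maps the component $\hat B^0$ of $f^{-1}(\operatorname{int}\hat Z)$ disjoint from $Z$ conformally onto $\operatorname{int}\hat Z$. Indeed, $f^{-1}(\hat Z)$ has the symmetric component $-e^{-2\pi i\theta}-\hat Z$, using $f(z)=f(-e^{2\pi i\theta}-z)$. This component touches $\hat Z$ only near $c_0$, and $f\colon \hat Z\to\C$ is injective by Theorem~\ref{thm:pseudo-siegel-bounds}. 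Hence $\hat B^0$ is exactly the image of $\hat Z$ under the involution $\iota(z)=-e^{2\pi i\theta}-z$. (Equivalently, the branch of $f^{-1}$ swaps the two preimage components.) So $\hat B^0=\iota(\hat Z)$ is a $K$-quasidisk congruent to $\hat Z$, attached to $\hat Z$ at $c_0$ (and possibly along the fjord dams near $c_0$).

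Next I transfer to external coordinates. The set $\hat B^0\setminus\{c_0\}$ lies outside $\operatorname{int}\hat Z$, so $\psi_{\rm qc}$ from Corollary~\ref{cor:new-psi} coincides with $\psi$ on it. Therefore $\hat{\Bub}^0=\psi_{\rm qc}(\hat B^0)$ is the image of a uniform quasidisk under a global $K$-qc map, and hence is a uniform quasidisk. For the diameter, I normalize. The map $\psi_{\rm qc}$ fixes $\infty$ and sends $c_0\mapsto 1$. Moreover, $\partial\hat Z$ is sent to the uniform quasicircle $\hat{\mathsf Y}$, which surrounds $\overline\D$ and has diameter comparable to $1$; this follows because $\hat{\mathsf Y}$ contains $\partial\D$-points and is a union of $\partial\D$ with geodesic dams. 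By compactness of normalized $K$-qc maps, $\psi_{\rm qc}$ is uniformly quasisymmetric in the spherical sense. In the $f$-plane, $\operatorname{diam}\hat B^0=\operatorname{diam}\hat Z\asymp |c_0|=1/2$, and $\operatorname{dist}(\hat B^0,\hat Z)$ away from $c_0$ is comparable to this scale. Quasisymmetry then gives $\operatorname{diam}\hat{\Bub}^0\asymp\operatorname{diam}\hat{\mathsf Y}\asymp 1$.

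The cone containment is the main point. Since $\hat{\Bub}^0$ is bounded, the only concern is the hyperbolic distance to $\Gamma_0$ near $\crit_0=1$, where $\hat{\Bub}^0$ approaches $\partial\D$. I would use the symmetry $\iota$. Near $c_0$, $\iota$ is a rotation by $\pi$ about $c_0$ that exchanges $\hat Z$ and $\hat B^0$. Since $\partial\hat Z$ is a $K$-quasicircle through $c_0$, $\hat Z$ contains no sectors at $c_0$ but is contained in the complement of a uniform two-sided cone around some direction. Its reflection $\hat B^0$ therefore lies in a uniform cone around the opposite direction, avoiding a neighborhood of $\partial \hat Z$ at every scale. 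More precisely, for every $r$ the set $\hat B^0\cap\{|z-c_0|\asymp r\}$ is at distance $\succeq r$ from $\partial\hat Z$. This uses the quasicircle Ahlfors three-point condition together with $\hat B^0\cap\hat Z$ being essentially only near $c_0$, which comes from the injectivity of $f$ on $\hat Z$ and the fact that $\hat Z\cup\hat B^0$ is $f^{-1}(\hat Z)$-full. Applying the quasisymmetric map $\psi_{\rm qc}$, this scale-invariant separation $\operatorname{dist}(x,\hat{\mathsf Y})\succeq|x-1|$ for $x\in\hat{\Bub}^0$ persists. Since $\partial\D\subset\overline{\operatorname{int}\hat{\mathsf Y}}$, it follows that $\operatorname{dist}(x,\partial\D)\succeq|x-1|$. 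This is exactly membership in $\mathsf C_0(K)$, because the hyperbolic metric of $\C\setminus\overline\D$ near $1$ is comparable to that of a half-plane. The delicate part is justifying the scale-by-scale separation at $c_0$; dams could enter near $c_0$ at tiny scales. Because both $\hat Z$ and $\hat B^0$ are exact images of each other under $\iota$, I expect this separation to follow from the quasicircle property alone.
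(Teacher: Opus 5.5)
There is a genuine gap, and it starts with the identification $\hat B^0=\iota(\hat Z)$, which is false. It presupposes that $\hat Z$ is a component of $f^{-1}(\hat Z)$, i.e.\ that $f(\hat Z)=\hat Z$. Pseudo-Siegel disks are not $f$-invariant: the tilings $\mathfrak{D}_m$ are not $f$-invariant, $f$ does not send geodesic dams to dams, and $\hat Z^m$ is only almost invariant under $f^{q_{m+1}}$. What is true is $f^{-1}(\hat Z)=\tilde Z\cup\iota(\tilde Z)$ and $\hat B^0=\iota(\tilde Z)$. Here $\tilde Z\supset Z$ is a different pseudo-Siegel disk, which differs from $\hat Z$ inside the dam collars. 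This has three consequences:
\begin{enumerate}
\item $\iota$ does not exchange $\hat Z$ and $\hat B^0$.
\item Injectivity of $f$ on $\hat Z$ gives only $\hat Z\cap\iota(\hat Z)=\{c_0\}$ and says nothing about $\hat B^0\cap\hat Z$. The paper gets $\hat B^0\cap\hat Z=\{c_0\}$ from the collars: they avoid the critical value, so those of $\hat Z$ and of $\hat B^0$ are disjoint.
\item The quasidisk property of $\hat B^0$ is not a congruence statement. It comes from $\hat B^0$ being a lift of $\hat Z$ through the branch point.
\end{enumerate}
Your diameter estimate survives with $\tilde Z$ in place of $\hat Z$, which is how the paper argues.

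More seriously, the cone containment, which you rightly flag as the heart, is not proved. Your expectation that scale-by-scale separation follows from the quasicircle property and $\iota$-symmetry alone is mistaken. A $K$-quasidisk may spiral at $c_0$, so there is no fixed cone direction. Also, a quasidisk $D$ and its image $\iota(D)$ can meet only at $c_0$ while being tangent there. Take $D$ locally equal to $\{\mathrm{Im}\,w>|\mathrm{Re}\,w|^{3/2}\}$ with $w=z-c_0$; then points of $\iota(D)$ at scale $r$ lie at distance $\ll r$ from $D$.

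The missing idea is to exploit the critical point. Since $c_1\in\partial\hat Z$ and $\hat Z$ is a $K$-quasidisk, take a qc map $\phi$ with $\phi(\hat Z)=\{|z+i|\le 1\}$ and $\phi(c_1)=0$. Let $V=\{\mathrm{Im}\,\phi>0\}$ be the resulting qc half-plane, so $\overline V\cap\hat Z=\{c_1\}$. Then $f^{-1}(V)$ is a pair of qc sectors at $c_0$, straightened to $\{xy>0\}$ by the lift $\Phi$ with $\Phi^2=\phi\circ f$. Their closure meets $\tilde Z\cup\hat B^0$ only at $c_0$, and separates $\hat B^0$ from $\tilde Z\supset Z$ with definite thickness at every scale. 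This yields $\mathrm{dist}(z,Z)\asymp|z-c_0|$ for $z\in\hat B^0$. Equivalently, points of $\hat B^0$ are at bounded hyperbolic distance in $\C\setminus\overline Z$ from the geodesic landing at $c_0$, and this conformally invariant statement is carried by $\psi$ to $\mathsf{C}_0(K)$.

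Note that the separation obtained this way is from $Z$ (or $\tilde Z$), not from $\partial\hat Z$. Since $\psi_{\textnormal{qc}}$ differs from $\psi$ inside $\hat Z$, your transfer via quasisymmetry of $\psi_{\textnormal{qc}}$ relative to $\hat{\mathsf Y}$ would need extra work. The hyperbolic formulation avoids this.
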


\begin{proof}
    The preimage of $\hat{Z}$ is the union of the pseudo-bubble $\hat{B}^0$ and a pseudo-Siegel disk $\tilde{Z}$. 
    The symmetric difference between $\tilde{Z}$ and $\hat{Z}$ is controlled by the collars of the dams in the boundary of $\hat{Z}$. 
    Since the collars avoid the critical value, they must be disjoint from the collars of $\hat{B}^0$. 
    Hence, $\hat{B}^0$ intersects $\hat{Z}$ only at its root, the critical point of $f$.
    
    Consider the quasiconformal extension $\psi_{\textnormal{qc}}$ described in Corollary \ref{cor:new-psi}. 
    Since the pseudo-bubble $\hat{\Bub}^0$ of $\fext$ is the image of the pseudo-bubble $\hat{B}^0$ of $f$ under $\psi_{\textnormal{qc}}$, then $\hat{\Bub}^0$ is a uniform quasidisk. The diameter estimate then follows from the fact that $\hat{B}^0$ and $\tilde{Z}$ have the same diameter. It remains to show that $\hat{\Bub}^0$ is contained in some definite cone with vertex $\crit_0$.

    There exists a quasiconformal homeomorphism of $\phi : \C\to \C$ that maps $\hat{Z}$ onto the closed disk $\{|z+i|\leq 1\}$, sends $0$ to $-i$, and the critical value $c_1$ to $0$. 
    Consider the quasiconformal half plane $V = \{ \text{Im}\phi(z)>0\}$ whose closure intersects $\hat{Z}$ only at the critical value. 
    Then, $f^{-1}(V)$ consists of two infinite sectors $S_+ \cup S_-$ with a common vertex at the critical point $\crit_0$. 
    Consider a quasiconformal homeomorphism $\Phi : \C \to \C$ such that $ \Phi(z)^2 = \phi\circ f(z)$. 
    The map $\Phi$ sends $f^{-1}(V)$ to $\{x+iy \: | \: xy>0\}$, a union of radially opposite right-angled infinite straight sectors. 
    
    Since the closure of $f^{-1}(V)$ separates $Z$ from the pseudo-bubble $\hat{B}^0$, then we must have $\text{dist}(z,Z) \asymp |z-c_0|$ for every point $z$ in $\hat{B}^0$. 
    This implies that every point in $\hat{B}^0$ must be of some uniformly bounded hyperbolic distance away from the infinite hyperbolic geodesic $\psi^{-1}(\Gamma_0)$ of $\C \backslash \overline{Z}$ landing at the critical point. 
    Hence, the pseudo-bubble $\hat{\Bub}^0$ must be contained in the cone $\mathsf{C}_0(K')$ for some universal constant $K'>0$.
\end{proof}

For $k \geq 0$, let us denote by $\Bub^k$ (resp. $\hat{\Bub}^k$) the unique primary bubble (resp. pseudo-bubble) rooted at $\crit_{-k}$. 
The next lemma quantifies how $\hat{\Bub}^k$ meets $\partial \D$ at a definite angle.

\begin{lemma}
\label{lem:control-near-roots}
    There exists a universal constant $K\geq 1$ such that the following holds for every integer $k \geq 0$. 
    Let $n \geq 0$ be such that $q_n \leq k < q_{n+1}$ and let $\sigma_k$ be the unique combinatorially shortest hyperbolic geodesic of $\C \backslash \overline{\D}$ that connects $\crit_{-k-q_{n+1}}$ and $\crit_{-k+q_{n+1}}$. 
    Then, the intersection between the pseudo-bubble $\hat{\Bub}^k$ and the domain in $\RS \backslash \overline{\D}$ bounded by $\sigma_k$ is contained in the cone $\mathsf{C}_k(K)$. 
\end{lemma}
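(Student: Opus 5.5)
Recall $\hat{\Bub}^k$ is the component of $\fext^{-k}(\hat{\Bub}^0)$ attached to $\crit_{-k}$. The plan is to pull back the cone estimate of Lemma~\ref{lem:main-bubble} along $\fext^{k}$, restricted to a neighborhood of $\crit_{-k}$ on which this branch is univalent (off the root) and has bounded distortion. Let $\Omega_k$ be the domain in $\RS\backslash\overline{\D}$ bounded by $\sigma_k$ and the arc $T_k:=[\crit_{-k-q_{n+1}},\crit_{-k+q_{n+1}}]\ni \crit_{-k}$. Combinatorially, $T_k$ contains no point $\crit_{-j}$ with $0\le j<k$ other than possibly near its endpoints, since $q_n\le k<q_{n+1}$. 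Hence $\fext^{k}$ maps $T_k$ homeomorphically onto the arc $T:=[\crit_{-q_{n+1}},\crit_{q_{n+1}}]$ around $\crit_0$; the only singularity of $\fext^k$ on $T_k$ is the point $\crit_{-k}$ itself, where $\fext^k$ restricted to $\partial\D$ has a (symmetric) critical-type point.

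First, I will handle the domain. By Schwarz reflection, $\fext$ is holomorphic off $\Bub^0$ and its reflection. I will show that the branch of $\fext^{-k}$ along the orbit $\crit_0\mapsto \crit_{-k}$ extends over the hyperbolic half-disk $\Omega$ bounded by the geodesic over $T$, up to the slit along $\hat{\Bub}^0$ at $\crit_0$. The key point is that the intermediate images $\fext^{j}(\Omega_k)$, $0\le j<k$, avoid the bubbles $\Bub^0,\Bub^{1},\dots$ rooted at $\crit_{0},\crit_{-1},\dots$ that are relevant. For this I would use Theorem~\ref{thm:R-APB} and the hyperbolic geometry of $\C\backslash\overline\D$: a region under a geodesic is mapped by a branch of an inverse near the circle to a region comparable to the region under the geodesic over the image interval. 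This is the standard Koebe/Schwarz-lemma-type argument used for critical circle maps, via the Poincaré-neighborhood lemma for maps that are real-symmetric and univalent on slit planes. Concretely, I will use the fact that $\fext^{-1}$ branches, viewed on $\C\backslash(\text{real-slits})$, contract the hyperbolic metric of the doubly slit plane. As a result, the pullback of the hyperbolic half-neighborhood of $T$ stays within a uniformly larger half-neighborhood of $T_k$, with the constants controlled by real bounds.

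Next, I will transfer the cone. Lemma~\ref{lem:main-bubble} places $\hat{\Bub}^0\subset\mathsf{C}_0(K)$, so $\hat{\Bub}^0\cap\Omega$ lies within bounded hyperbolic distance of $\Gamma_0$. Near $\crit_0$, the map $\fext^{k}$ around $\crit_{-k}$ factors as a square-root-type map composed with a map of bounded distortion. Indeed, the ``critical'' behavior comes from the first iterate at $\crit_{-k}$ hitting $\crit_0$ and then $\Bub^0$, while the remaining iterates are univalent on the reflected neighborhood and, by Koebe together with the real bounds of Theorem~\ref{thm:R-APB}(1), have bounded distortion. Bounded distortion maps preserve hyperbolic cones up to changing $K$, since they send radial geodesics to curves within bounded hyperbolic distance of radial geodesics; here I use quasi-symmetry of the boundary map together with the fact that the adjacent tiles on either side of $\crit_{-k}$ are comparable. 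The root-type factor sends a cone to a cone as well. Therefore $\hat{\Bub}^k\cap\Omega_k\subset\mathsf{C}_k(K')$.

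The main obstacle is the first step: justifying univalence of the pullback on a full hyperbolic neighborhood $\Omega_k$ of $T_k$ rather than just on the arc, given that the external map is only defined off $\Bub^0$ and its reflection. The slits created by the pseudo-bubbles $\hat{\Bub}^{j}$ rooted inside $T_k$ might, a priori, enter $\Omega_k$ far from the radial direction. I would first try an induction on $n$ for all primary pseudo-bubbles simultaneously, using the already-established cone control at lower combinatorial depth to keep these slits within cones. Coupled with the choice of $\sigma_k$ spanning $q_{n+1}$-neighbors, this guarantees that no other primary root of generation $<k$ lies in $T_k$ except near its ends.
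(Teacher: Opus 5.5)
Your overall strategy (pull the cone of Lemma~\ref{lem:main-bubble} at $\crit_0$ back to $\crit_{-k}$ by a univalent branch of $\fext^{-k}$ with bounded distortion) is the paper's. As written, though, the proposal has a genuine gap. It misplaces the singularities, and it leaves open exactly the step that makes the argument work.

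\emph{The singular structure is wrong.} The point $\crit_{-k}$ is not a singular point of $\fext^k$. The singular points of $\fext^k$ on $\partial\D$ are $\crit_0,\crit_{-1},\dots,\crit_{-k+1}$. Near $\crit_{-k}$ the map $\fext^k$ is analytic and injective; the root becomes singular only for $\fext^{k+1}$. So there is no square-root-type factor: near its root, $\hat{\Bub}^k$ is simply a univalent pullback of $\hat{\Bub}^0$. Moreover, $T_k$ contains no singular point of $\fext^k$ at all, not even near its ends. Since $q_n\le k<q_{n+1}$, the nearest singular points $\crit_{-l_k},\crit_{-r_k}$ lie at combinatorial distances $l_n$ and at least $l_n+l_{n+1}$ from $\crit_{-k}$, while $T_k$ has half-length $l_{n+1}$. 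By real a priori bounds, $[\crit_{-l_k},\crit_{-r_k}]$ therefore compactly contains $T_k$ with definite space. This is the fact the paper's one-line proof rests on.

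\emph{The univalence step is not resolved.} The tools you suggest for it do not work as stated.
\begin{itemize}
\item A Schwarz-lemma argument on a slit plane or slit sphere is unavailable, because the reflected $\fext$ is branched at $\infty$ and $0$. Its inverse branches are therefore not defined on such domains.
\item Keeping the forward images $\fext^j(\Omega_k)$ away from bubbles, or running an induction on $n$ that keeps ``slits'' inside cones, is not available a priori; controlling bubbles is what the paper is about.
\end{itemize}
None of this is needed. The missing observation is that bubbles never obstruct \emph{inverse} branches. Since $\fext\colon \C\backslash(\overline{\D}\cup\Bub^0)\to\C\backslash\overline{\D}$ is a covering, a branch of $\fext^{-1}$ exists on every simply connected subdomain of $\C\backslash\overline{\D}$, bubbles included. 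It extends by reflection across any arc of $\partial\D$ avoiding the critical value $\crit_1$.

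Iterating, the branch $g$ of $\fext^{-k}$ with $g(\crit_0)=\crit_{-k}$ is univalent on the $\partial\D$-symmetric domain over $J=\fext^k\big((\crit_{-l_k},\crit_{-r_k})\big)$. This $J$ is the maximal arc about $\crit_0$ free of $\crit_1,\dots,\crit_k$. Koebe then gives bounded distortion of $g$ on a definite symmetric neighborhood of $\fext^k(T_k)$, whose image contains a thin neighborhood of $T_k$. Every $z\in\hat{\Bub}^k$ in that image satisfies $z=g(\fext^k z)$ with $\fext^k z\in\hat{\Bub}^0\subset\mathsf{C}_0(K)$, so $z$ lies in a cone at $\crit_{-k}$. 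The remaining points of $\Omega_k$ sit at height at least a definite fraction of $|T_k|$, within distance $O(|T_k|)$ of $\crit_{-k}$, so they lie in $\mathsf{C}_k(K)$ automatically.
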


\begin{proof}
    Let $\crit_{-l_k}$ and $\crit_{-r_k}$ be the two critical points of $\fext^k$ on the left and right of $\crit_{-k}$ respectively. 
    Since $[\crit_{-l_k},\crit_{-r_k}]$ compactly contains the interval $[\crit_{-k-q_{n+1}}, \crit_{-k+q_{n+1}}]$, we can apply Koebe distortion theorem together with Lemma \ref{lem:main-bubble} to prove the claim.
\end{proof}


\section{Curve families from bubbles}
\label{sec:curve-families}

Let us work with the dynamical plane of $\fext$. 
Given any integer $k \geq 0$, let us denote
\[
    \Uext_k := \C \backslash \left( \overline{\D} \cup \Bub^k \right),
\]
which is a topological disk punctured at infinity. 
There exists a unique $n \in \N$ satisfying $q_{n} \leq k < q_{n+1}$.
Let us define the curve family $\mathcal{F}_k$ as the set of proper arcs in $\Uext_k$ that start from a point on $\Bub^k$ and \emph{protect} either $[\crit_{-k-q_n},\crit_{-k}] \subset \partial \D$ or $[\crit_{-k},\crit_{-k+q_n}] \subset \partial \D$.
See Figure \ref{fig:curve-family}.

Here, the term \emph{protect} means the following.
A proper arc $\gamma$ on an open punctured disk $X \subset \RS$ is said to protect a subset $K$ of $\overline{X}$ if $K$ is contained in the closure of the unique disk enclosed by $\gamma \cup \partial X$ away from the puncture.

In this section, we will prove the following theorem in the near-degenerate regime.

\begin{figure}
    \centering
\begin{tikzpicture}
    \node[anchor=south west,inner sep=0] (image) at (0,0) {\includegraphics[width=0.95\linewidth]{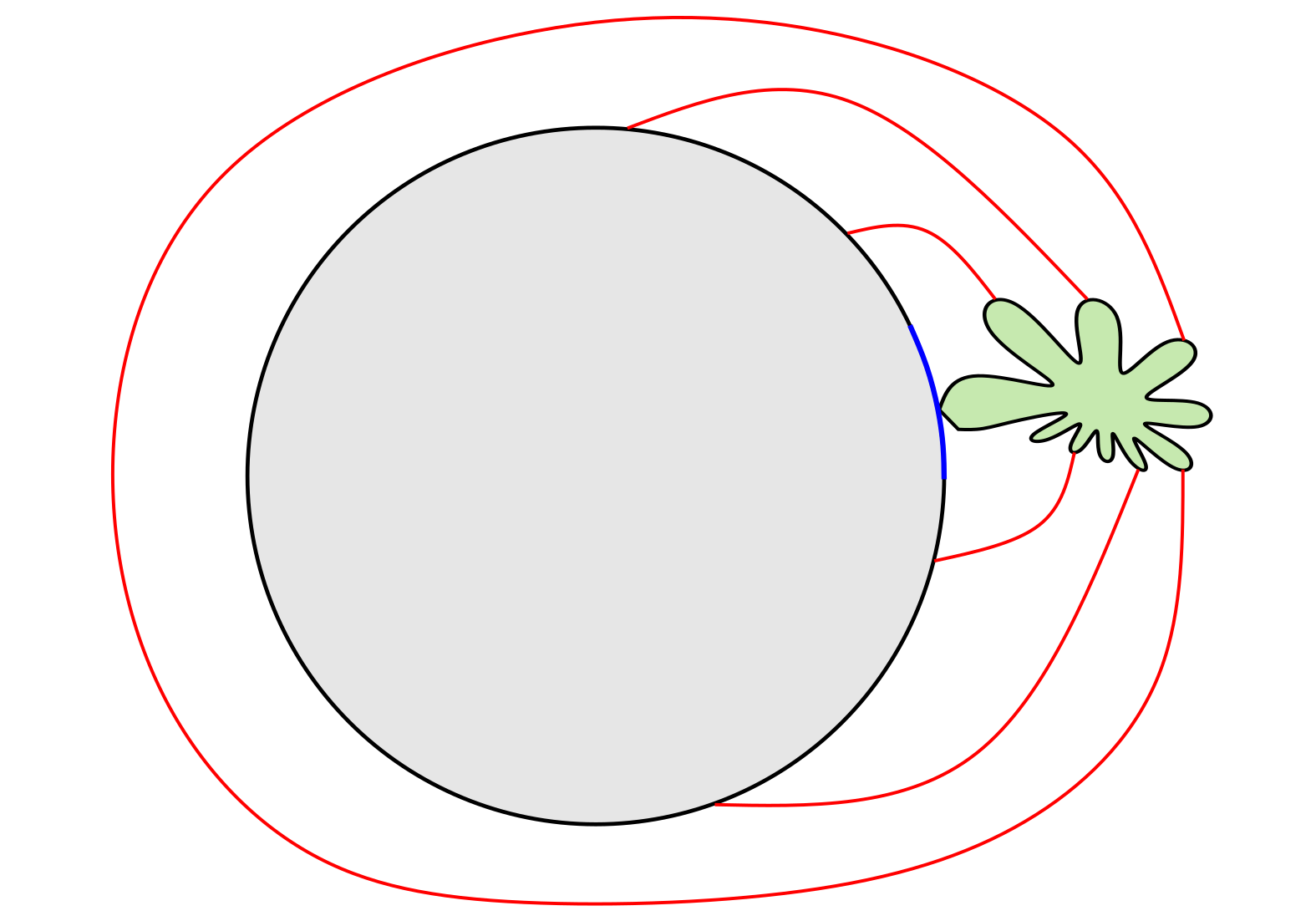}};
    \begin{scope}[
        x={(image.south east)},
        y={(image.north west)}
    ]
    
    \node [black] at (0.47,0.48) {\Large{$\mathbb{D}$}};
    \node [blue] at (0.665,0.475) {$\crit_{-k-q_{n}}$};
    \node [blue] at (0.7175,0.475) {$\bullet$};
    \node [blue] at (0.675,0.56) {$\crit_{-k}$};
    \node [blue] at (0.712,0.555) {$\bullet$};
    \node [blue] at (0.635,0.66) {$\crit_{-k+q_{n}}$};
    \node [blue] at (0.69,0.65) {$\bullet$};
    \node [black] at (0.835,0.57) {$\Bub^k$};
    \node [red] at (0.835,0.25) {$\mathcal{F}_k$};
    \end{scope}
\end{tikzpicture}
    \caption{The curve family $\mathcal{F}_k$}
    \label{fig:curve-family}
\end{figure}

\begin{theorem}
\label{thm:curve-families}
    There exists a universal $K >1$ such that for all $k \geq 0$, we have 
    \[
    W(\mathcal{F}_k) \leq K.
    \]
\end{theorem}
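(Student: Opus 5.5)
The proof follows the amplification strategy outlined in \S\ref{ss:summary}. First, I would reduce everything to the indices $k=q_n$. For general $k$ with $q_n\le k<q_{n+1}$, the map $\fext^{k-q_n}$ sends a neighbourhood of $\Bub^k$ to a neighbourhood of $\Bub^{q_n}$. Near $\crit_{-k}$, the map $\fext^{k-q_n}$ is univalent on the domain bounded by the geodesic joining the neighbouring critical points $\crit_{-l_k},\crit_{-r_k}$ of $\fext^{k}$, exactly as in the proof of Lemma~\ref{lem:control-near-roots}. Combining this with real a priori bounds (Theorem~\ref{thm:R-APB}), Koebe distortion, and the cone control of Lemma~\ref{lem:control-near-roots}, I would show $W(\mathcal{F}_k)\le W(\mathcal{F}_{q_n})+O(1)$. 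The $O(1)$ comes from curves leaving the Koebe domain, whose width is bounded because the domain has definite size compared with $[\crit_{-k-q_n},\crit_{-k+q_n}]$. It therefore suffices to bound $W(\mathcal{F}_{q_n})$ uniformly.

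The core step is the amplification inequality \eqref{eq:outline.main}: $W(\mathcal{F}_{q_{n-4}})\ge \tfrac43 W(\mathcal{F}_{q_n})-O(1)$. Given it, iterating down to $n\le 4$, where $W(\mathcal{F}_{q_j})=O(1)$ by Lemma~\ref{lem:main-bubble} and the bounded geometry of $\Bub^0$, gives a universal bound. Indeed, if $W(\mathcal{F}_{q_n})$ were larger than a fixed threshold $C$, the widths along the subsequence $n,n-4,n-8,\dots$ would increase geometrically and contradict the bound at the bottom.

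To prove the amplification, assume $W:=W(\mathcal{F}_{q_n})$ is huge. By the non-crossing and parallel/series laws of extremal width, a definite proportion of $\mathcal{F}_{q_n}$ (say at least half, up to $O(1)$) protects one side, say $[\crit_{-q_n},\crit_{0}]$. Moreover, these curves must travel combinatorially far along $\partial\D$. The reason is that the width of curves from $\Bub^{q_n}$ that land within $\mathsf{C}_{q_n}(K)$-controlled neighbourhoods of the root is $O(1)$, by Lemma~\ref{lem:control-near-roots} together with the bounded outer geometry of the tiles $\mathcal{D}_{n}$ from Theorem~\ref{thm:uniform-external}.

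Next I would push this subfamily forward by the $\alpha'$-dynamics $\fext^{q_n-q_{n-1}}$, which maps $\Bub^{q_n}$ onto $\Bub^{q_{n-1}}$. The key point is to restrict to the domain on which this iterate has no external critical points. The $\alpha'$-iterate was chosen precisely so that $\fext^{q_n-q_{n-1}}$ is a covering of bounded degree (in fact univalent after cutting along appropriate geodesics/pseudo-bubbles) on the region swept by the curves. The image family has width at least $W-O(1)$ and overflows the protected intervals of both $\mathcal{F}_{q_{n-1}}$ and $\mathcal{F}_{q_{n-2}}$. To compare, I would split each image curve at its first exit past $\crit_{-q_{n-1}\pm q_{n-1}}$ and compare with $\mathcal{F}_{q_{n-1}}$ or, after one more pullback/push by $\fext^{q_{n-1}-q_{n-2}}$, with $\mathcal{F}_{q_{n-2}}$. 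Summing over the two alternatives and over both sides yields a gain factor, and repeating the procedure over four levels produces the constant $4/3$.

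I expect this last step to be the main obstacle. It requires controlling the topology of the pushed-forward curves: showing they genuinely protect the required intervals rather than wrapping around intermediate primary bubbles $\Bub^{j}$, and excluding losses from curves that hit the external critical points of $\fext^{q_n-q_{n-1}}$. My first attempt would be to separate these by the geodesics $\sigma_k$ of Lemma~\ref{lem:control-near-roots} and by pseudo-bubble collars, and to bound the width of every bad subfamily by $O(1)$ using Theorem~\ref{thm:uniform-external}.
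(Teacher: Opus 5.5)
Your proposal has a genuine gap at its core: the amplification inequality is asserted rather than derived, and the mechanism you sketch cannot produce a gain. Splitting the pushed-forward curves ``at their first exit'' and ``summing over the two alternatives and over both sides'' is a parallel-law decomposition. At best it yields something like $W(\mathcal{F}_{q_n})\le W(\mathcal{F}_{q_{n-1}})+W(\mathcal{F}_{q_{n-2}})+O(1)$, which allows Fibonacci-type growth and gives no contradiction. A contraction factor needs the series law: each curve must overflow two \emph{disjoint} restrictions, each dominated by a shallower $\mathcal{F}_{q_m}$. A curve leaving one bubble and landing far away supplies only one such family. The missing idea is the Blocking Lemma (Lemma~\ref{lem:blocking}), a purely topological buffer/non-crossing argument over $t\in\{q_{n+2},\dots,2q_{n+2}-1\}$. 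It produces some $\mathbf t$ in this range such that, after discarding $O(1)$ width, every curve of $\mathcal{F}_{\mathbf t}$ lands outside $[\crit_{-l^2(\mathbf t)},\crit_{-r^2(\mathbf t)}]$ and hits a neighbouring bubble $\Bub^{l(\mathbf t)}$ or $\Bub^{r(\mathbf t)}$. This supplies the second family. The initial segment, pushed forward by $\fext^{\mathbf t-q_n}$, and the segment after the last visit to $\Bub^{r}$, which lies in $\mathcal{F}_{r}$, are disjoint restrictions of width $\le W(\mathcal{F}_{q_n})+O(1)$ each. If instead the neighbour has larger generation than $\mathbf t$, the arriving and departing pieces around $\Bub^r$ are both pushed by $\fext^{r-q_n}$ into $\mathcal{F}_{q_n}$. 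The series law then gives $W(\mathcal{F}_{\mathbf t})\le \frac34 W(\mathcal{F}_{q_n})+O(1)$. The four-level shift comes from $\mathbf t\in[q_{n+2},2q_{n+2})$ being compared with $q_n$, not from repeating a procedure four times. Your proposed remedy (cutting along $\sigma_k$ and collars, bounding bad subfamilies via Theorem~\ref{thm:uniform-external}) only discards $O(1)$ width and does not create the second disjoint family.

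Your preliminary steps also rest on Koebe distortion near the root, which is circular here. Koebe and Lemma~\ref{lem:control-near-roots} only control $\Bub^k$ inside the univalence domain (inside $\sigma_k$). The whole difficulty is that $\Bub^k$ may a priori extend far beyond it, so curves issuing from that far part are not ``curves leaving the Koebe domain'' and their width is uncontrolled. The paper instead uses that $\fext^b:\Uext^b_a\to\Uext_a$ is a covering of punctured disks of degree $2^b$; this degree is unbounded, contrary to your claim about $\fext^{q_n-q_{n-1}}$. Lemma~\ref{lem:univalent-pushforward} loses only width $10$, regardless of the degree and of where the bubble sits. Combined with Lemma~\ref{lem:local-width}, which is proved via the rectangles of Lemma~\ref{lem:control-critical-pt} and the log-rule rather than cone control, this gives almost monotonicity (Proposition~\ref{prop:weak-improvement}). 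The same proposition is needed for your base case. Since $q_j$ can be arbitrarily large even for $j\le 3$, $W(\mathcal{F}_{q_j})=O(1)$ does not follow from Lemma~\ref{lem:main-bubble} alone. One gets $W(\mathcal{F}_{q_j})=O(j)$ by chaining almost monotonicity from $W(\mathcal{F}_0)=O(1)$.
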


Let us outline the main steps of the proof; see also~\S\ref{ss:summary}. We argue by contradiction, assuming that $\mathcal F_k$ is sufficiently wide for some $k$.

\begin{enumerate}[label=\textnormal{(\arabic*)}]
    \item\label{item:outline.1}  By removing $O(t)$ curves from $\mathcal F_k$, we can assume that the remaining curves in  $\mathcal F_k$ land $t$ combinatorial intervals away from $\Bub^k$, see Lemma~\ref{lem:local-width}. This is an application of Theorem~\ref{thm:R-APB}, real a priori bounds.
    \item Proposition~\ref{prop:weak-improvement} (almost monotonicity) states that most curves in $\mathcal F_k=\mathcal F_{a+b}$ can be pushed forward toward $\mathcal F_{a}$ whenever $a+b$ and $a$ represent ``neighboring'' renormalization scales. See also the notion of the $\alpha'$-Dynamics in~\S\ref{sss:alpha'.Dynam}. 
    \item\label{item:outline.3} Lemma~\ref{lem:blocking} describes how families $\mathcal F_t$ with $q_n \le t\le 2q_{n}-1$ block one another. The lemma asserts the existence of $\mathcal{F}_{\mathbf{t}}$ so that most curves in $\mathcal{F}_{\mathbf{t}}$ emerging from $\Bub^{\mathbf{t}}$ submerge in neighboring bubbles $\Bub^{l({\mathbf{t}})}\cup \Bub^{r({\mathbf{t}})}$ with respect to the cyclic order. Here, Item~\ref{item:outline.1} allows us to assume that most curves in $\mathcal{F}_{\mathbf{t}}$ land sufficiently far from $\Bub^{\mathbf{t}}$.
    
    \item By Proposition~\ref{prop:weak-improvement}, we assume that $\mathcal{F}_{\mathbf{t}}$ in Item~\ref{item:outline.3} is almost as wide as the original $\mathcal F_k$.
     \item\label{item:outline.5} The final step is carried out in the proof of Theorem~\ref{thm:amplification}. The submergence of $\mathcal{F}_{\mathbf{t}}$ into $\Bub^{l({\mathbf{t}})}\cup \Bub^{r({\mathbf{t}})}$ produces a strict amplification that can then be pushed forward to a shallower scale:
    \[
    W(\mathcal{F}_{q_{n-2}}) \geq \frac{4}{3} W(\mathcal{F}_{\mathbf{t}}) - O(1).
    \]
     We distinguish into three cases according to whether $\Bub^{\mathbf{t}}$ has a smaller generation than $\Bub^{l({\mathbf{t}})}$ or $\Bub^{r({\mathbf{t}})}$.
\end{enumerate}

Before going through the details, the reader may wish to review Appendix \ref{sec:appendix} for some background on extremal width, rectangles, and laminations.

\subsection{Almost monotonicity}
\label{ss:preparation}
First, observe that the analysis of $\mathcal{F}_k$ can be reduced to disjoint simple proper curves.

\begin{lemma}
\label{lem:canonical-lamination-Dk}
    For any $k \geq 0$, there exists a lamination $\mathcal{L}_k \subset \mathcal{F}_k$ such that
    \[
        W(\mathcal{L}_k) \leq W(\mathcal{F}_k) \leq W(\mathcal{L}_k) + O(1).
    \]
\end{lemma}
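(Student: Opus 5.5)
The plan is to use the standard ``canonical lamination'' result from the appendix on extremal width: every curve family joining two sides of a conformal rectangle-like configuration in a planar domain has, up to an additive $O(1)$ in width, a canonical lamination of disjoint simple proper arcs. The lower bound $W(\mathcal{L}_k)\le W(\mathcal{F}_k)$ is immediate from monotonicity once $\mathcal{L}_k\subset\mathcal{F}_k$. The content is the upper bound.

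First, I would split $\mathcal{F}_k=\mathcal{F}_k^{l}\cup\mathcal{F}_k^{r}$ according to whether a curve protects $[\crit_{-k-q_n},\crit_{-k}]$ or $[\crit_{-k},\crit_{-k+q_n}]$. Since width is subadditive, it suffices to treat each half and lose at most a factor of $2$ plus an additive constant. Actually, it is better to keep the union and find a single lamination.

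Next, I would pass to the conformal model. The domain $\Uext_k$ is a disk punctured at $\infty$; after filling in the puncture, it becomes a topological disk whose boundary is $\partial\D\cup\partial\Bub^k$, pinched at $\crit_{-k}$. A curve protecting $[\crit_{-k},\crit_{-k+q_n}]$ and starting on $\Bub^k$ is, up to homotopy rel boundary, a curve in the disk $\Uext_k\cup\{\infty\}$ from the arc $\partial\Bub^k$ to the boundary arc $\beta^r$, which runs from $\crit_{-k+q_n}$ around the circle, away from $\crit_{-k}$. The protection condition then says the curve separates $[\crit_{-k},\crit_{-k+q_n}]$ from $\infty$. Note, however, that $\infty$ is an interior point. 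Curves passing on either side of $\infty$ give two homotopy classes, and ``protecting'' picks the side. So each of $\mathcal{F}^{l/r}_k$ is, after deleting the puncture and cutting along a curve from $\infty$ to the boundary, a family of curves connecting two boundary arcs of a disk: a quadrilateral.

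For a quadrilateral, the family of all curves connecting its two horizontal sides has the vertical foliation of the extremal rectangle as a lamination of equal width. The subtlety is that $\mathcal{F}^{l/r}_k$ is defined by a homotopy/protection condition in a punctured disk, not by curves in the cut quadrilateral. I would handle this with the appendix lemma: the width of curves in an annulus-like or punctured domain in a given class exceeds the width of the corresponding genuine laminar family by at most $O(1)$. The excess comes only from curves winding near the puncture, and those form a family of bounded width because they must cross a fixed annulus around $\infty$ separating it from $\overline{\D}\cup\Bub^k$. Concretely, curves of $\mathcal{F}_k$ that go far out (beyond a round annulus of definite modulus around $\overline{\D}\cup\Bub^k$, using $\operatorname{diam}\Bub^k\lesssim 1$ from Lemma~\ref{lem:main-bubble} and the Koebe bound implicit in Lemma~\ref{lem:control-near-roots}) have width $O(1)$. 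The remaining curves live in a bounded simply connected region, where protection reduces to connecting prescribed boundary arcs. Taking the vertical leaves of the two extremal rectangles (for the $l$ and $r$ sides) gives $\mathcal{L}_k$.

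The main obstacle is ensuring the two laminations for the $l$ and $r$ sides are mutually disjoint, so that their union is a lamination, and that the resulting width bound is additive only up to $O(1)$. I would resolve this by noting that leaves protecting the left interval and leaves protecting the right interval must exit $\Bub^k$ from opposite sides near the root $\crit_{-k}$. One can then take the canonical (innermost) leaves, which do not cross. Alternatively, one can drop the thinner of the two families at a cost of a factor, but that would be too lossy. So I would use the non-crossing of innermost leaves, together with the appendix's statement that the union of two laminations in a disk can be made disjoint at $O(1)$ cost.
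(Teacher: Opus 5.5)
Your instinct to fall back on the appendix's canonical-lamination lemma is right; the paper's proof is a single line applying Lemma~\ref{lem:canonical-lamination} to the punctured disk $\Uext_k$. However, the mechanism you actually propose for handling the protection condition breaks down at several points.

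\textbf{Gaps in your argument.} First, discarding curves that exit a large round disk does not produce a simply connected region: $\{|z|<R\}\setminus(\overline{\D}\cup\Bub^k)$ is still an annulus. The ambiguity that ``protect'' resolves is which way a curve goes around $\overline{\D}\cup\Bub^k$, i.e.\ around the puncture. Curves of both kinds, and curves winding several times, can stay arbitrarily close to $\overline{\D}\cup\Bub^k$, far from $\infty$. If you instead cut along an arc from $\infty$ to the boundary, you lose every curve of $\mathcal{F}_k$ crossing the cut. These are not the ``far out'' curves, and whether their width is $O(1)$ depends on where the cut lands and on the relative harmonic measures of $\partial\Bub^k$ and $\partial\D$ seen from $\infty$.

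Second, your disjointness argument is false. A curve can leave the left flank of $\Bub^k$, pass around its far end, and then protect $[\crit_{-k},\crit_{-k+q_n}]$. Moreover, arcs with both endpoints on $\partial\Bub^k$ that encircle $\overline{\D}$ protect both intervals and belong to $\mathcal{F}_k$. Yet they lie in neither of your two quadrilaterals, and their width is not a priori bounded.

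Third, the geometric input you invoke is not available at this stage. Lemma~\ref{lem:main-bubble} concerns only $\hat{\Bub}^0$. Lemma~\ref{lem:control-near-roots} controls $\hat{\Bub}^k$ only inside $\sigma_k$ and gives no diameter bound. Size bounds for $\hat{\Bub}^k$ are Theorem~\ref{thm:size-external-pseudo-bubbles}, which is downstream of this lemma.

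\textbf{The missing idea.} The statement is purely conformal--topological and should be proved in the universal cover.
\begin{itemize}
\item Uniformize $\Uext_k$ by $\D^*$ with $\infty\mapsto 0$, and lift by $\exp$ to the upper half-plane. Then $\partial\Bub^k$ becomes a periodic string of intervals.
\item ``Protecting the right (left) interval'' becomes ``landing to the right (left) of the corresponding lift of $\crit_{-k+q_n}$ (resp.\ $\crit_{-k-q_n}$)''. This formulation automatically includes the winding and encircling curves.
\item Each lifted family joins a fixed interval to a half-line, so it has an honest extremal rectangle.
\item When projecting down, non-injectivity comes only from overlaps with deck translates and from linking between the left-going and right-going parts. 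Both cost $O(1)$ by the shift/non-crossing argument.
\item Trimming $1$-buffers therefore yields a genuine lamination $\mathcal{L}_k\subset\mathcal{F}_k$ with $W(\mathcal{L}_k)\ge W(\mathcal{F}_k)-O(1)$.
\end{itemize}
This is Kahn's construction behind Lemma~\ref{lem:canonical-lamination}. The constant is universal precisely because no geometry of $\Bub^k$ enters.
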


\begin{proof}
    This is a straightforward application of Lemma \ref{lem:canonical-lamination} applied to the punctured disk $\Uext_k$.
\end{proof}

For any pair of integers $k \geq 0$ and $m \geq 1$, denote by $\Uext_k^m$ the unbounded connected component of $\fext^{-m}(\Uext_k)$. 
Each $\Uext_k^m$ is a disk punctured at infinity. 
Later on, we will be repeatedly applying Lemma \ref{lem:univalent-pushforward} to the degree $2^m$ covering map
\[
    \fext^m : \Uext_k^m \to \Uext_k.
\]
Note that $\fext : \Uext_0 \to \C \backslash \overline{\D}$ is also a double covering map.

The following lemma quantifies the geometric separation between bubbles and critical points on a deeper scale.

\begin{lemma}[Control near critical points]
\label{lem:control-critical-pt}
    There exist a universal even integer $\mathbf{m} \geq 2$ and a universal constant $C >1$ such that the following holds
    for any pair of integers $j, n \geq 0$ with $j \leq q_{n+1}-2$. 
    Let $\Rext_j$ be the proper conformal rectangle in $\C \backslash \overline{\D}$ that has horizontal sides $[\crit_{-j-q_{n-2\mathbf{m}}},\crit_{-j-q_{n-\mathbf{m}}}] \subset \partial \D$ and $[\crit_{-j-q_{n+\mathbf{m}}},\crit_{-j-q_{n+2\mathbf{m}}}] \subset \partial \D$, has vertical sides being the hyperbolic geodesics of $\C \backslash \overline{\D}$, and protects the critical point $\crit_{-j}$.
    Then,
    \begin{enumerate}[label=\textnormal{(\arabic*)}]
        \item $\Rext_j$ has width 
        \[
            \frac{1}{C} \leq W(\Rext_k) \leq C;
        \]
        \item for every $k \in \N$ with $j+1 \leq k \leq q_{n+1}-1$, the bubble $\Bub^k$ does not intersect the inner vertical side of $\Rext_j$.
    \end{enumerate}
\end{lemma}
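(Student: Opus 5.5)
Part (1) is a statement about real \emph{a priori} bounds on $\partial\D$. Part (2) asks that the bubbles $\Bub^k$, $j<k<q_{n+1}$, stay inside their own hyperbolic cones, so they cannot reach the geodesic side of $\Rext_j$ next to $\crit_{-j}$. I will choose $\mathbf m$ large and even at the end, using only universal constants from Theorem~\ref{thm:R-APB} and Lemmas~\ref{lem:main-bubble}, \ref{lem:control-near-roots}.

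\textbf{Step 1 (width).} Since $\mathbf m$ is even, the points $\crit_{-j-q_{n\pm\mathbf m}}$ and $\crit_{-j-q_{n\pm2\mathbf m}}$ all lie on the same side of $\crit_{-j}$. By Theorem~\ref{thm:R-APB}, iterated over $\mathbf m$ levels, the four arcs satisfy the following.
\begin{itemize}
\item $A_1=[\crit_{-j-q_{n-2\mathbf m}},\crit_{-j-q_{n-\mathbf m}}]$ has length $\asymp|\crit_{-j}-\crit_{-j-q_{n-2\mathbf m}}|$.
\item The gap between $A_1$ and $\crit_{-j}$ is $\asymp|\crit_{-j}-\crit_{-j-q_{n-\mathbf m}}|\le \lambda^{\mathbf m}|A_1|$, with $\lambda<1$ universal.
\item The arc $A_2$ near $\crit_{-j}$ is of the same form but $3\mathbf m$ levels deeper.
\end{itemize}
Here I use that $\crit_{-j-q_s}$ is the dynamically closest return to $\crit_{-j}$ at level $s$, so the combinatorics are those of the rotation. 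The rectangle $\Rext_j$ corresponds, by Schwarz reflection, to the quadrilateral in $\RS\setminus(A_1\cup A_2)$ obtained by doubling. Its extremal width is a universal function of the cross-ratio of the four endpoints, and that cross-ratio is bounded above and below by the ratios just described, all in terms of $\mathbf m$. Hence $C^{-1}\le W(\Rext_j)\le C$ with $C=C(\mathbf m)$. I note the caveat that $W$ here is not close to $1$; it is controlled in terms of $\mathbf m$.

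\textbf{Step 2 (bubbles avoid the inner side).} The inner vertical side $\gamma$ is the hyperbolic geodesic joining $\crit_{-j-q_{n-\mathbf m}}$ to $\crit_{-j-q_{n+\mathbf m}}$. It protects $\crit_{-j}$ and hugs the arc $J=[\crit_{-j-q_{n+\mathbf m}},\crit_{-j-q_{n-\mathbf m}}]$ (possibly crossing $\crit_{-j}$). Geometrically, $\gamma$ is a round arc of Euclidean height $\asymp|J|$.

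Take $k$ with $j<k<q_{n+1}$ and put $i=k-j$, so $1\le i< q_{n+1}$. Then $\crit_{-k}$ is at combinatorial distance from $\crit_{-j}$ at least that of a level-$n$ closest return. By real bounds, $|\crit_{-k}-\crit_{-j}|\succeq |\crit_{-j}-\crit_{-j-q_{n+1}}|$, and this is $\ge K^{\mathbf m/2}$ times the $q_{n+\mathbf m}$-scale. So $\crit_{-k}$ lies either far outside $J$, or inside $J$ but in the middle portion. The latter case occurs when $\crit_{-k}$ lies between $\crit_{-j-q_{n-\mathbf m}}$ and $\crit_{-j}$.

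By the statement of Theorem~\ref{main-thm-01}(2a,b) in external form, which I must prove directly here, $\Bub^k$ should have diameter $\asymp$ the $q_{n'}$-scale of its root, where $q_{n'}\le k<q_{n'+1}$, and should sit in $\mathsf C_k(K)$. Lemma~\ref{lem:control-near-roots} gives the cone condition only inside $\sigma_k$. I will therefore apply it, via Koebe distortion of $\fext^{k}$ pulling back Lemma~\ref{lem:main-bubble}, on an interval around $\crit_{-k}$ free of critical points of $\fext^k$. Because $k<q_{n+1}$, the neighbouring critical points of $\fext^k$ are at distance $\succeq$ the level-$(n+1)$ scale, and $\crit_{-j}$ is one of them. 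The resulting Koebe region then has size $\asymp|\crit_{-k}-\crit_{-j}|$ around $\crit_{-k}$. Its image under $\fext^{k}$ is a definite neighbourhood of $\crit_0$ which, by Lemma~\ref{lem:main-bubble}, contains $\hat\Bub^0$ once we shrink by Koebe. I expect this to be the main obstacle: a priori, $\fext^{k}$ of a neighbourhood of $\crit_{-k}$ that is small relative to $\crit_{-j}$ need not cover $\hat\Bub^0$, which has size $\asymp1$. To handle it I will use the univalent pullback along the geodesic cone, and bound the diameter of the relevant component of $\fext^{-k}$ by the hyperbolic-contraction argument inside a round half-disk centered at $\crit_{-k}$ of radius $\asymp|\crit_{-k}-\crit_{-j}|$, whose image contains the external $\mathsf C_0(K)$-truncation. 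This shows $\Bub^k$ lies in a Euclidean half-disk of radius $\epsilon|\crit_{-k}-\crit_{-j}|$ around $\crit_{-k}$, with $\epsilon$ universal and small.

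Finally, choose $\mathbf m$ large so that $\lambda^{\mathbf m}\ll\epsilon$. Then such a half-disk, centered at a point at distance $\gg|J|$ from $J$, or in the middle of $J$ but with radius smaller than the height of $\gamma$ there, cannot meet $\gamma$. The first case is handled by distance. The second uses the fact that $\gamma$ stays above height $\asymp|J|$ over the middle part, while $\Bub^k$ has height $\le\epsilon\cdot\mathrm{dist}\le\epsilon|J|$. Points of $\Bub^k$ near $\crit_{-k}$ are below $\gamma$, and $\Bub^k$ is connected, so $\Bub^k\cap\gamma=\emptyset$.
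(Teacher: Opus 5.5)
Your Step~1 is in line with the paper: the width bound follows from real \emph{a priori} bounds, with $C$ depending on the universal $\mathbf m$. The gap is in Step~2, at the point you flag yourself. Your argument rests on a \emph{global} localization of $\Bub^k$: that it lies in a half-disk of radius $\epsilon|\crit_{-k}-\crit_{-j}|$ about $\crit_{-k}$, with $\epsilon$ small. Nothing available at this stage provides this.

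\begin{itemize}
\item Koebe for $\fext^k$ near $\crit_{-k}$ only operates on the space between the nearest critical points of $\fext^k$. The image of that space is a neighbourhood of $\crit_0$ of size comparable to the distance from $\crit_0$ to the nearest critical value among $\crit_1,\dots,\crit_k$. This is tiny compared with $\mathrm{diam}\,\Bub^0\asymp 1$, which is why Lemma~\ref{lem:control-near-roots} controls $\hat{\Bub}^k$ only inside $\sigma_k$.
\item The part of $\Bub^k$ that leaves this Koebe region is exactly what is a priori uncontrolled; compare the blow-up in \S\ref{sss:hoglet.singul}.
\item Hyperbolic contraction of $\fext^{-1}$ gives nothing here, since $\Bub^0$ touches $\partial\D$ and has infinite hyperbolic diameter.
\item The size and angle control you need is in substance Theorem~\ref{thm:size-external-pseudo-bubbles}. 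The paper deduces it from Theorem~\ref{thm:curve-families}, whose proof uses the present lemma through Lemma~\ref{lem:local-width}, so invoking it would be circular.
\item The localization is in fact false for small $\epsilon$. When $\crit_{-j}$ is the nearest critical point of $\fext^k$ to $\crit_{-k}$ (e.g.\ $j=0$, $k=q_{n'}$ with $n'\le n$), the paper's final bounds give $\mathrm{diam}\,\Bub^k\asymp|\crit_{-k}-\crit_{-j}|$.
\end{itemize}

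The paper never needs the shape of $\Bub^k$; it moves the problem to the image side. The map $\fext^k$ sends $\Bub^k$ onto $\Bub^0$ and sends the critical point $\crit_{-j}$ to the critical value $\crit_{k-j}$. By Lemma~\ref{lem:main-bubble} and real bounds, there is a universal $\nu$ such that $\Bub^0$ avoids the geodesic $\gamma_i$ joining $\crit_{i-q_{n+\nu}}$ and $\crit_{i+q_{n+\nu}}$, for all $1\le i\le q_{n+1}$. The disk $S_{k-j}$ under $\gamma_{k-j}$ is well separated from the other critical values of $\fext^k$, so Koebe gives $\fext^k(\delta_j)\subset S_{k-j}$ once $\mathbf m\gg\nu$. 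Hence $\Bub^k\cap\delta_j\neq\emptyset$ would force the connected set $\Bub^0\ni\crit_0$ to meet $S_{k-j}$, and therefore $\gamma_{k-j}$, a contradiction. Only the known geometry of $\Bub^0$ and the local behaviour of $\fext^k$ at its critical point $\crit_{-j}$ are used.

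This argument needs $\delta_j$ to have both endpoints in the deep (level $\ge n+\mathbf m$) neighbourhood of $\crit_{-j}$. Your $\gamma$, joining $\crit_{-j-q_{n-\mathbf m}}$ to $\crit_{-j-q_{n+\mathbf m}}$, has size of level $n-\mathbf m$, so its $\fext^k$-image cannot lie in $S_{k-j}$. With that reading, your final case split also fails even if the localization is granted. For $k=j+q_{n-\mathbf m}<q_{n+1}$ the root $\crit_{-k}$ is itself an endpoint of $\gamma$. More generally, $\crit_{-k}$ can lie within $\epsilon|J|$ of that endpoint, where $\gamma$ descends to $\partial\D$.
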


\begin{proof}
    The estimate on the width of $\Rext_j$ follows from Theorem \ref{thm:R-APB}. 
    Suppose property (2) fails, so there exists some integer $k \in  \{ j+1,\ldots, q_{n+1}-1\}$ such that the bubble $\Bub^k$ intersects the inner vertical boundary $\delta_j$ of $\Rext_j$. 
    We will show that this is impossible when $\mathbf{m}$ is sufficiently high.

    Firstly, by Lemma \ref{lem:main-bubble} and Theorem \ref{thm:R-APB}, there exists a universal even integer $\nu \geq 2$ such that the primary bubble $\Bub^0$ is disjoint from the hyperbolic geodesic $\gamma_i$ joining $\crit_{i-q_{n+\nu}}$ and $\crit_{i+q_{n+\nu}}$ for all $i \in \{1,\ldots,q_{n+1}\}$. 
    The disk $S_{k-j} \subset \C \backslash \overline{\D}$ enclosed by $\gamma_{k-j}$ is well separated from all the critical values of $\fext^{k}$ with the exception of $\crit_{k-j}$ itself. 
    Hence, by Koebe distortion theorem, the hyperbolic geodesic $\delta_j$ must be sent into $S_{k-j}$ by $\fext^{k}$ if we choose $\mathbf{m} \gg \nu$.
    Since $\Bub^k$ is assumed to intersect $\delta_j$, then $\Bub^0$ has to intersect $\gamma_{k-j}$, which is a contradiction.
\end{proof}

\begin{lemma}
\label{lem:local-width}
    Consider integers $t \geq 1$, $n \geq 1$, and $k \in \{ q_{n},\ldots,q_{n+1} -1 \}$. Let 
    \[
    I_{-t}, \quad I_{-t+1},\quad \ldots \quad I_{-1}, \quad I_0, \quad I_1, \quad \ldots \quad I_{t-1}, \quad I_t
    \]
    denote the unique sequence of consecutive connected components of $\partial \D \backslash \{ \crit_{-j}\}_{0 \leq j \leq k-1}$ such that $I_0$ contains $\crit_{-k}$ and for $i \in \{-t+1,\ldots,t-1\}$, $I_{i-1}$ is on the left of $I_i$ and $I_{i+1}$ is on the right of $I_i$. 
    The family of proper curves in $\Uext_k$ that start from the bubble $\Bub^k$ and land on $\bigcup_{i=-t}^t I_i \backslash [\crit_{-k-q_{n+2}}, \crit_{-k+q_{n+2}}]$ has width $O(t)$. 
\end{lemma}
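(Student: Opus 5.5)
The plan is to bound the width piece by piece, using that $\Bub^k$ sits inside a definite hyperbolic cone at $\crit_{-k}$ and is small compared to the intervals $I_i$. Write the target set as a union of at most $2t+2$ pieces: the $2t$ intervals $I_i$ with $i\neq 0$, and the two components of $I_0\backslash[\crit_{-k-q_{n+2}},\crit_{-k+q_{n+2}}]$. By subadditivity of extremal width (Appendix~\ref{sec:appendix}), it suffices to show that the family of curves from $\Bub^k$ to each single piece has width $O(1)$, with a universal constant. Summing then gives $O(t)$.

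For a single piece $J$, the key fact is that $J$ is combinatorially separated from $\crit_{-k}$. The endpoints of the $I_i$ are critical points of $\fext^k$ (including $\crit_0$), and $k<q_{n+1}$. So the tiles around $\crit_{-k}$ are tiles of $\mathcal{D}_{n}$ or $\mathcal{D}_{n+1}$, up to a bounded refinement. The interval $[\crit_{-k-q_{n+2}},\crit_{-k+q_{n+2}}]$ is comparable, by Theorem~\ref{thm:R-APB}, to a level-$(n+2)$ tile. Hence there is an annulus-type obstruction: the pair of intervals $J$ and $L:=[\crit_{-k-q_{n+2}},\crit_{-k+q_{n+2}}]$ satisfies $\dist(J,L)\succeq |L|$ on the circle. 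Real a priori bounds give the Euclidean comparisons, and the exponential decay in Theorem~\ref{thm:R-APB} shows that $L$ is at least a definite factor smaller than the gap.

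Next, localize $\Bub^k$. By Lemma~\ref{lem:control-near-roots}, the part of $\hat\Bub^k$ under the geodesic $\sigma_k$ lies in the cone $\mathsf{C}_k(K)$, and $\sigma_k$ spans exactly an interval of size comparable to $|L|$. I would like $\Bub^k$ to lie entirely below a geodesic spanning a bounded multiple of $L$. Lemma~\ref{lem:control-critical-pt}(2), applied with $j$ chosen so that $\crit_{-j}$ is an adjacent critical point of $\fext^k$, prevents $\Bub^k$ from crossing the inner vertical sides of the rectangles $\Rext_j$. These rectangles are built at scales $n\pm\mathbf{m}$, $n\pm 2\mathbf{m}$ with $\mathbf{m}$ universal, so this traps $\Bub^k$ inside a region of Euclidean diameter $\asymp|L|$ (up to a universal factor) near $\crit_{-k}$.

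Then bound the width of curves from $\Bub^k$ to $J$ by comparison with the standard family in $\C\backslash\overline{\D}$. Any such curve starts in a set of diameter $\preceq |L'|$, with $L'\supset L$ a bounded enlargement, and ends on $J$. Two cases:
\begin{itemize}
\item If $\dist(J,L')\succeq |L'|$, the width is bounded by the extremal width between two boundary intervals of $\C\backslash\overline{\D}$ at definite relative distance, which is $O(1)$.
\item If $J$ is one of the two components of $I_0$ (or an adjacent $I_{\pm1}$) touching $L'$, the curves must cross a conformal rectangle of definite modulus. I would take this to be one of the $\Rext_j$, or the region between $\sigma_k$ and a geodesic at a few levels shallower, across which $\Bub^k$ does not pass. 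The width is then again $O(1)$ by Lemma~\ref{lem:control-critical-pt}(1).
\end{itemize}

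The main obstacle is the trapping step: showing, uniformly in $k$, that the whole bubble $\Bub^k$ (not just its part under $\sigma_k$) stays in a bounded neighborhood of $\crit_{-k}$ of size $\asymp|L|$ rather than wandering far out. Lemma~\ref{lem:control-near-roots} only controls the portion under $\sigma_k$. My first attempt would be a Koebe argument as in the proof of Lemma~\ref{lem:control-critical-pt}: pull back by $\fext^{k}$, univalent on the disk enclosed by the geodesic spanning the two neighboring critical points of $\fext^k$, the disk containing $\hat\Bub^0$. The fallback is to avoid full trapping and bound directly the width of the curves leaving the $\sigma_k$-region. These must cross the definite-modulus rectangle between $\sigma_k$ and a shallower geodesic, which contributes $O(1)$ per side.
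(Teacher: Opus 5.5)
Your decomposition into $2t+2$ pieces is fine. The problem is that everything after it rests on the trapping step, and that step is a genuine gap. Since $\crit_{-k}$ is a common endpoint of tiles of $\mathcal{D}_n$, $\mathcal{D}_{n+1}$ and $\mathcal{D}_{n+2}$, Theorem~\ref{thm:R-APB} gives $|L|\asymp|\crit_{-k}-\crit_{-k+q_n}|$. So ``$\Bub^k$ lies in a region of diameter $\asymp|L|$ at $\crit_{-k}$'' is, up to constants, the upper diameter bound of Theorem~\ref{thm:size-external-pseudo-bubbles}. The paper obtains that bound only from Theorem~\ref{thm:curve-families}. Its proof goes through Proposition~\ref{prop:weak-improvement} and Lemma~\ref{lem:blocking}, and both of these invoke the lemma you are proving, so you cannot assume it here.

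Nor do your tools give it. Lemma~\ref{lem:control-critical-pt}(2) only keeps $\Bub^k$ from crossing small geodesic arches over the neighboring critical points $\crit_{-j}$, $j<k$; it says nothing about how far $\Bub^k$ reaches elsewhere. Lemma~\ref{lem:control-near-roots} only controls the part of $\Bub^k$ under $\sigma_k$.

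Your Koebe attempt cannot work either. The branch of $\fext^{-k}$ sending $\crit_0$ to $\crit_{-k}$ has Koebe space only on a neighborhood of $\crit_0$ whose trace on $\partial\D$ is $\fext^k(\overline{I_0})=[\crit_{k-l_k},\crit_{k-r_k}]$. This arc is bounded by critical values of $\fext^k$ and has combinatorial length at most $l_{n-1}+l_n$, so its Euclidean size tends to $0$ with $n$, whereas $\textnormal{diam}(\hat{\Bub}^0)\asymp 1$. The disk does not contain $\hat{\Bub}^0$, and no larger domain is available: $\Bub^0$ touches $\partial\D$ at $\crit_0$, and $\fext^k$ has the critical value $\crit_{q_n}$ within the level-$n$ scale of $\crit_0$. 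This is exactly why Lemma~\ref{lem:control-near-roots} controls only the part under $\sigma_k$ (cf.\ \S\ref{sss:hoglet.singul}). Your fallback fails for the same reason. If part of $\Bub^k$ sticks out of the region under $\sigma_k$, curves starting there reach $J$ without crossing any rectangle. A priori that part could come close to $J$, which is precisely what the lemma must rule out.

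The missing idea is that the global position of $\Bub^k$ never needs to be known; the bound comes from dynamics. Take $t=1$ and write $I_1=(\crit_{-a},\crit_{-b})$ with $\crit_{-a}\in\partial I_0$. Let $J_k,J_a,J_b$ be the combinatorial $l_{n+\mathbf{m}}$-neighborhoods of $\crit_{-k},\crit_{-a},\crit_{-b}$; note $J_k\subset L$ since $\mathbf{m}\ge 2$.

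Leaves landing in $J_a\cup J_b$ must cross $\Rext_a$ or $\Rext_b$, which have bounded width. This is the only place the location of $\Bub^k$ enters, via the purely local Lemma~\ref{lem:control-critical-pt}(2).

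For leaves landing on the remaining intervals $I'_0,I'_1$, keep only the terminal segment in $\Uext_0^k$, running from the landing point back to the first bubble of generation $\le k$ it meets. Push these segments forward univalently by $\fext^{k+1}$ using Lemma~\ref{lem:univalent-pushforward}. Since $\fext^{k+1}$ maps $\partial\Uext_0^k$ into $\partial\D$, the images are proper arcs in $\C\setminus\overline{\D}$ with two properties:
\begin{itemize}
\item they start on $\fext^{k+1}(I'_i)$, an interval of combinatorial length at most $l_{n-1}+l_n$;
\item each protects an adjacent interval of combinatorial length $l_{n+\mathbf{m}}$, namely the image of a flanking piece of $J_k$, $J_a$ or $J_b$.
\end{itemize}
Theorem~\ref{thm:R-APB} and Proposition~\ref{prop:log-rule} then bound their width by $O(1)$, whatever $\Bub^k$ looks like away from its root. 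Iterating over the tiles gives $O(t)$.
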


\begin{proof}
    For simplicity, let us prove the lemma for $t=1$; the argument below can be easily iterated to cover the general case $t>1$. 
    
    There exists integers $a,b \in \{0,1,\ldots,k-1\}$ such that $I_1=(\crit_{-a},\crit_{-b})$ and $\crit_{-a}$ is an endpoint of $I_0$. 
    Below, we will show that the family $\mathcal{G}$ of proper curves in $\Uext_k$ that start from the bubble $\Bub^k$ and land at a point between $\crit_{-k}$ and $\crit_{-b}$ has bounded width.
    (The treatment for those landing on the left of $\crit_{-k}$ is analogous.)
    By Lemma \ref{lem:canonical-lamination}, there exists a lamination $\mathcal{G}'$ in $\Uext_k$ connecting $\Bub^k$ and $[\crit_{-k},\crit_{-b}]$ with width $\geq W(\mathcal{G}) - O(1)$.

    Consider the constant $\mathbf{m}$ and the rectangles $\Rext_a$ and $\Rext_b$ near $\crit_{-a}$ and $\crit_{-b}$ respectively from Lemma \ref{lem:control-critical-pt}. 
    For any $j \in \{a,b,k\}$, denote by $J_j$ the interval of points on $\partial \D$ that are of combinatorial distance at most $l_{n+\mathbf{m}}$ from $\crit_{-j}$.
    Denote
    \[
        I'_0 := (\crit_{-k}, \crit_{-a}) \backslash (J_k \cup J_a), \qquad I'_1 := I_1 \backslash (J_a \cup J_b).
    \]
    For $i \in \{0,1\}$, denote by $\mathcal{S}_i$ the set of leaves of $\mathcal{G}'$ that land on $I'_i$. 
    Every leaf of the lamination $\mathcal{G}' \backslash (\mathcal{S}_0 \cup \mathcal{S}_1)$ lands on $J_a \cup J_b$, so it must cross either $\Rext_a$ or $\Rext_b$. 
    Therefore,
    \[
        W(\mathcal{S}_0) + W(\mathcal{S}_1) \geq W(\mathcal{G}') - O(1).
    \]

    For $i \in \{0,1\}$, consider the restriction $\mathcal{S}^{\res}_i$ of $\mathcal{S}_i$ into the punctured disk $\Uext_0^k$ such that each leaf of $\mathcal{S}^{\res}_i$ starts from a point on $I'_i$ and ends somewhere on the boundary of $\Uext_0^k$. 
    By Lemma \ref{lem:univalent-pushforward}, there is a sub-lamination of $\mathcal{S}^{\res}_i$ that can be univalently pushed forward by $\fext^{k+1}$ onto a proper lamination $\mathcal{S}^{\new}_i$ in $\C \backslash \overline{\D}$ with width 
    \[
    W(\mathcal{S}^{\new}_i) \geq W(\mathcal{S}^{\res}_i) - O(1).
    \]
    Every leaf of $\mathcal{S}^{\new}_i$ starts from the interval $\fext^{k+1}(I'_i)$, which has combinatorial length at most $l_{n-1}+l_n$, and protects a neighboring interval of combinatorial length $l_{n+\mathbf{m}}$. 
    By Theorem \ref{thm:R-APB} and Proposition \ref{prop:log-rule}, we have $W(\mathcal{S}^{\new}_i) = O(1)$ for each $i$. 
    Therefore, $W(\mathcal{G}') = O(1)$.
\end{proof}

\begin{proposition}[Almost Monotonicity]
\label{prop:weak-improvement}
    There exists some universal constant $C>0$ such that for any integers $a$, $b$, and $n\geq 0$ satisfying $q_{n} \leq a < a+b \leq q_{n+1}$, 
    \begin{equation}
    \label{eqn:weak-improvement}
       W(\mathcal{F}_a) \geq   W(\mathcal{F}_{a+b})- C.
    \end{equation}
\end{proposition}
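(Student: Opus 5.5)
The plan is to push a canonical lamination of $\mathcal{F}_{a+b}$ forward by $\fext^{b}$. Since $\fext^b(\crit_{-a-b})=\crit_{-a}$, the bubble $\Bub^{a+b}$ is mapped onto $\Bub^a$. The aim is to show that, after discarding $O(1)$ width, the image leaves lie in $\Uext_a$ and protect one of the two intervals $[\crit_{-a-q_n},\crit_{-a}]$ or $[\crit_{-a},\crit_{-a+q_n}]$. The monotonicity of width under overflowing then gives \eqref{eqn:weak-improvement}.

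First, apply Lemma \ref{lem:canonical-lamination-Dk} to replace $\mathcal{F}_{a+b}$ by a lamination $\mathcal{L}_{a+b}$, losing $O(1)$ width. Next, apply Lemma \ref{lem:local-width} with a fixed $t$, say $t=2$, with $k=a+b$. If $a+b=q_{n+1}$, use the analogous statement at level $n+1$, whose $q$-scale is comparable by Theorem \ref{thm:R-APB}. This lets us discard $O(1)$ width and keep only the leaves that land outside the neighbouring components $I_{-2},\dots,I_2$ of $\partial\D\setminus\{\crit_{-j}\}_{0\le j<a+b}$. Each remaining leaf $\gamma$ starts on $\Bub^{a+b}$ and protects an interval around $\crit_{-a-b}$ that contains at least one full adjacent tile of the dynamical partition for $\fext^{a+b}$. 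Since $b\le q_{n+1}-q_n$, this tile has combinatorial length at least about $l_n$.

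The push-forward step comes next. Restrict these leaves to the unbounded component $\Uext_a^{b}$ of $\fext^{-b}(\Uext_a)$; this is a disk punctured at infinity, mapped by $\fext^b$ as a degree-$2^b$ covering onto $\Uext_a$. The starting points on $\Bub^{a+b}\subset\partial\Uext^b_a$ map to $\Bub^a$. By Lemma \ref{lem:univalent-pushforward}, a sublamination of width $\ge W-O(1)$ pushes forward univalently to a proper lamination in $\Uext_a$ starting on $\Bub^a$. The restriction might cut a leaf before it reaches $\partial\D$, namely when it first hits a different preimage of $\Bub^a$. In that case the truncated curve still separates the relevant arc, so the image curve still protects the image of the protected interval. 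Because $\fext^b$ acts on $\partial\D$ as rotation in combinatorial terms, the protected interval containing a tile of length $\gtrsim l_n$ adjacent to $\crit_{-a-b}$ is carried to an interval adjacent to $\crit_{-a}$ of the same combinatorial length. By the combinatorics of $q_n\le a<a+b\le q_{n+1}$ (closest returns), this image contains $[\crit_{-a-q_n},\crit_{-a}]$ or $[\crit_{-a},\crit_{-a+q_n}]$. Hence the image lamination overflows $\mathcal{F}_a$, and $W(\mathcal{F}_a)\ge W(\mathcal{F}_{a+b})-C$.

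I expect the main obstacle to be this combinatorial step: ensuring that the pushed-forward leaves protect a full $q_n$-interval on the correct side of $\crit_{-a}$. This is exactly why Lemma \ref{lem:local-width} is used to force the landing points far away. Some care is also needed because the preimage components of $\Bub^a$ under $\fext^{b}$ attached near $\crit_{-a-b}$ can obstruct the leaves. I would handle this by noting that any leaf truncated by such a component still separates, so its image still protects. The bookkeeping for the univalent push-forward through the degree-$2^b$ covering is then routine given Lemma \ref{lem:univalent-pushforward}.
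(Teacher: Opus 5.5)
There is a genuine gap, and it sits in the case $a+b=q_{n+1}$, the case your pre-filtering was meant to handle.

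When you restrict a leaf $\gamma$ of $\mathcal{L}_{a+b}$ to $\Uext^b_a$, it is cut at its first exit from $\Uext^b_a$. That exit need not be on another preimage of $\Bub^a$. It typically happens on a lower-generation bubble $\Bub^j$, $0\le j<b$, or on something hanging off it, because leaves of $\mathcal{F}_{a+b}$ avoid only $\overline{\D}\cup\Bub^{a+b}$ and are free to cross $\Bub^j$. The truncated arc then protects only the boundary arc of $\Uext^b_a$ between $\Bub^{a+b}$ and the exit point, not the interval protected by $\gamma$.

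For $a+b<q_{n+1}$ this is harmless, though not for the reason you give. By closest returns, every root $\crit_{-j}$ with $0\le j<b$ lies at combinatorial distance $\|(a+b-j)\theta\|>l_n$ from $\crit_{-a-b}$. So every truncated leaf still protects more than $l_n$ next to the root, on one side or the other, and its image lies in $\mathcal{F}_a$. No pre-filtering is needed in this case.

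For $a+b=q_{n+1}$ the argument breaks down. The root $\crit_0$ of $\Bub^0$ sits at distance $l_{n+1}<l_n$ from $\crit_{-q_{n+1}}$. Take a leaf protecting $[\crit_{-q_{n+1}},\crit_0]$ that crosses $\Bub^0$ on its way to a far landing point; it is cut on $\partial\Bub^0$. Now $\fext^b$ maps $\partial\Bub^0$ homeomorphically onto $\partial\D$ with $\crit_0\mapsto\crit_{-a+q_{n+1}}$. Hence the image leaf lands at an uncontrolled point beyond $\crit_{-a+q_{n+1}}$. It need not protect $[\crit_{-a-q_n},\crit_{-a}]$, which lies on that same side of $\crit_{-a}$, so it is not in $\mathcal{F}_a$. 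Your application of Lemma~\ref{lem:local-width} at scale $a+b$ constrains only the final endpoint of $\gamma$, not its first exit point, so it cannot exclude this. Your assertion that the truncated curve ``still separates the relevant arc'' fails exactly here.

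The missing step is to apply Lemma~\ref{lem:local-width} after the push-forward, at scale $k=a$. The image leaves that fail to lie in $\mathcal{F}_a$ land in $[\crit_{-a+q_{n+1}},\crit_{-a-q_n}]$ or in the symmetric interval on the other side of $\crit_{-a}$. These intervals lie within a bounded number of tiles of $\crit_{-a}$ and outside $[\crit_{-a-q_{n+2}},\crit_{-a+q_{n+2}}]$, so those leaves carry width $O(1)$.

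This is precisely the paper's proof. It pushes forward the canonical lamination via Lemma~\ref{lem:univalent-pushforward}, observes that the image lies in $\mathcal{F}_a$ when $a+b<q_{n+1}$, and, when $a+b=q_{n+1}$, discards the $O(1)$-wide part landing in these intervals. With that step in place, your pre-filtering at scale $a+b$ becomes superfluous.
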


\begin{proof}
    Consider the lamination $\mathcal{L}_{a+b}$ from Lemma \ref{lem:canonical-lamination-Dk}. 
    Let $\mathcal{L}^b_{a+b}$ be the restriction of $\mathcal{L}_{a+b}$ in $\Uext^b_a$ which consists of curves with starting point on the bubble $\Bub^{a+b}$. 
    By Lemma \ref{lem:univalent-pushforward}, there exists a sub-lamination of $\mathcal{L}^{b}_{a+b}$ that is univalently pushed forward to a proper lamination $\mathcal{L}'_a$ of $\Uext_a$, and it has width
    \[
        W(\mathcal{F}_{a+b}) \leq W(\mathcal{L}'_a) + O(1).
    \]

    If $a+b < q_{n+1}$, then $\mathcal{L}'_a$ is a subset of $\mathcal{F}_a$ and so the desired inequality (\ref{eqn:weak-improvement}) holds. 
    Let us now assume that $a+b = q_{n+1}$. 
    We can split $\mathcal{L}'_a$ into two disjoint sub-laminations $\mathcal{L}''_a$ and $\mathcal{L}'''_a$, where $\mathcal{L}''_a$ is contained in $\mathcal{F}_a$ and every leaf of $\mathcal{L}'''_a$ lands on $[\crit_{-a-q_{n}},\crit_{-a+q_{n+1}}] \cup [\crit_{-a-q_{n+1}},\crit_{-a+q_{n}}]$. However, $\mathcal{L}'''_a$ has width $O(1)$ by virtue of Lemma \ref{lem:local-width}, so then we again arrive at the inequality (\ref{eqn:weak-improvement}).
\end{proof}

We know from Theorem \ref{thm:R-APB} and Lemma \ref{lem:main-bubble} that 
\[
    W(\mathcal{F}_0) = O(1).
\]
The corollary above implies that the width of $\mathcal{F}_{q_n}$ can only grow at most linearly:
\begin{align}
    \label{eqn:W-linear}
    W(\mathcal{F}_{q_n}) = O(n).
\end{align}
In the next subsection, we will remove the dependence on $n$.

\subsection{Amplification}

Let us fix $n\geq 2$. 
Consider the set of points 
\begin{equation}
\label{lvl-n-list}
    \left\{ \crit_{-s} \: | \: s=q_n, q_n+1,\ldots, 2q_{n}-1 \right\}.
\end{equation}
It defines a tiling of $\partial \D$ where every tile has combinatorial length equal to either $l_{n-1}$ or $l_{n-1}+l_{n}$.

For every integer $t$ in $\{q_{n}, \ldots, 2q_{n}-1\}$, denote by $l(t)$ (resp. $r(t)$) be the integer in $\{q_{n}, \ldots, 2q_{n}-1\}$ such that
$\crit_{-l(t)}$ (resp. $\crit_{-r(t)}$) is the point directly on the left (resp. right) of $\crit_{-t}$ among the critical points on the list (\ref{lvl-n-list}).
One can make this more explicit.
After swapping $l(t)$ and $r(t)$ if necessary, we have
\[
l(t) := \begin{cases}
    t+ (q_n-q_{n-1}), & \text{ if } t < q_{n} + q_{n-1}, \\
    t-q_{n-1}, & \text{ if otherwise}.
\end{cases}
\]
and
\[
r(t) := \begin{cases}
    t+q_{n-1}, & \text{ if } t < 2q_{n}-q_{n-1}, \\
    t-(q_n-q_{n-1}), & \text{ if otherwise}.
\end{cases}
\]
We will also denote 
\[
l^2(t):= l(l(t)) \qquad \text{and} \qquad r^2(t):=r(r(t)).
\]

\begin{lemma}[Blocking]
\label{lem:blocking}
    For every $n\geq 2$, there exists an integer ${\mathbf{t}} = \mathbf{t}(n)$ between $q_n$ and $2q_{n}-1$ and a lamination $\mathcal{G}_{\mathbf{t}} \subset \mathcal{F}_{\mathbf{t}}$ with the following properties.
    \begin{enumerate}[label=\textnormal{(\arabic*)}]
        \item $W(\mathcal{G}_{\mathbf{t}}) \leq W(\mathcal{F}_{\mathbf{t}}) \leq W(\mathcal{G}_{\mathbf{t}}) + O(1)$.
        \item Every leaf of $\mathcal{G}_t$ lands at a point outside the interval $[\crit_{-l^2({\mathbf{t}})}, \crit_{-r^2({\mathbf{t}})}]$.
        \item Every leaf of $\mathcal{G}_{\mathbf{t}}$ intersects either $\Bub^{l({\mathbf{t}})}$ or $\Bub^{r({\mathbf{t}})}$ or both.
    \end{enumerate}
\end{lemma}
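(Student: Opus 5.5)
The plan is to pick $\mathbf{t}$ by a combinatorial "minimality" argument among the bubbles $\Bub^s$, $s\in\{q_n,\ldots,2q_n-1\}$, and to derive properties (1)--(3) from the lamination of Lemma~\ref{lem:canonical-lamination-Dk} after two trimming steps.

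\emph{Trimming, for an arbitrary $t$.} Start from the lamination $\mathcal{L}_t\subset\mathcal{F}_t$ of Lemma~\ref{lem:canonical-lamination-Dk}. Then discard the leaves landing in $[\crit_{-l^2(t)},\crit_{-r^2(t)}]$. This interval is covered by a bounded number of components $I_i$ of $\partial\D\setminus\{\crit_{-j}\}_{0\le j\le t-1}$ near $\crit_{-t}$, together with the tiny interval $[\crit_{-t-q_{n+2}},\crit_{-t+q_{n+2}}]$. The first part costs $O(1)$ width by Lemma~\ref{lem:local-width} with bounded $t$. The second part costs $O(1)$ because of the cone control of Lemma~\ref{lem:control-near-roots} and real a priori bounds (Theorem~\ref{thm:R-APB}). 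Hence, for \emph{every} $t$, we obtain a sublamination $\mathcal{G}'_t$ satisfying (1) and (2), with $W(\mathcal{G}'_t)\ge W(\mathcal{F}_t)-O(1)$.

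\emph{Choosing $\mathbf{t}$ so that (3) holds.} Suppose a leaf $\gamma\in\mathcal{G}'_t$ is disjoint from $\Bub^{l(t)}$ and protects the side of $\crit_{-t}$ containing $\crit_{-l(t)}$. Since $\gamma$ lands beyond $\crit_{-l^2(t)}$, the root $\crit_{-l(t)}$ lies in the disk cut out by $\gamma\cup\Bub^t\cup\partial\D$. Because $\Bub^{l(t)}$ is connected and misses $\gamma$, the whole bubble $\Bub^{l(t)}$ lies under $\gamma$. Write $t\to l(t)$ in this case, and define $t\to r(t)$ analogously.

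Two facts about this relation are needed.
\begin{enumerate}
\item[(a)] One cannot have both $t\to s$ and $s\to t$ for neighbors $s,t$. If $\Bub^s$ is trapped under a curve emanating from $\Bub^t$, then no curve from $\Bub^s$ can reach past $\crit_{-t}$ on the far side without crossing $\Bub^t$ or that curve. Hence $\Bub^t$ cannot lie under a curve from $\Bub^s$.
\item[(b)] There is no directed cycle $t\to r(t)\to r^2(t)\to\cdots\to t$ going all the way around the circle. The trapping disks would then be successively nested, each contained in the next, and following them around the whole circle would place $\Bub^t$ strictly inside a disk that it itself bounds. This is impossible by planarity.
\end{enumerate}
Given (a) and (b), the orientation on the cyclic graph with vertices $\{q_n,\ldots,2q_n-1\}$ admits a vertex $\mathbf{t}$ with no outgoing edge. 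For this $\mathbf{t}$, every leaf of $\mathcal{G}_{\mathbf{t}}:=\mathcal{G}'_{\mathbf{t}}$ must meet $\Bub^{l(\mathbf{t})}$ or $\Bub^{r(\mathbf{t})}$, which is (3).

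\emph{Main obstacle.} I expect the hardest step to be making the topology of (a) and especially (b) rigorous. Leaves protect an interval on only one side of the root, different bubbles may have different generations, and one must check that a single leaf really confines the entire neighboring bubble rather than only its root. I would first formalize "$\Bub^s$ lies under a leaf from $\Bub^t$" as containment of $\Bub^s$ in the closed disk bounded by $\gamma$, $\Bub^t$, and the arc of $\partial\D$ it protects. Then I would prove (a) and (b) via the cyclic order on $\partial\D$ of the landing points together with the Jordan curve theorem.
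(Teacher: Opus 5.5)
There is a genuine gap: claims (a) and (b) are false as you formulate them, because they treat leaves of \emph{different} laminations as if they could not cross. Your overall skeleton (trim $\mathcal{L}_t$, use $l,r$ to propagate along the cycle $\{q_n,\dots,2q_n-1\}$, find a $t$ whose curves cannot escape) is close in spirit to the paper.

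A leaf $\gamma\in\mathcal{G}'_t$ is a curve in $\Uext_t$, and a leaf $\gamma'\in\mathcal{G}'_s$ is a curve in $\Uext_s$. The curve $\gamma'$ cannot pass through $\Bub^s$, and by your definition it avoids the bubble it traps. But nothing forbids it from crossing $\gamma$. Concretely, take $s=l(t)$:
\begin{itemize}
\item let $\gamma$ leave $\Bub^t$, arch over $\Bub^s$, and land to the left of $\crit_{-l^2(t)}$;
\item let $\gamma'$ leave $\Bub^s$, cross $\gamma$, arch over $\Bub^t$, and land to the right of $\crit_{-r^2(s)}=\crit_{-r(t)}$.
\end{itemize}
Both are admissible simple curves, so $t\to s$ and $s\to t$ can hold simultaneously. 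Your inference in (a), ``hence $\Bub^t$ cannot lie under a curve from $\Bub^s$'', silently assumes $\gamma'$ cannot cross $\gamma$. The same phenomenon destroys the nesting of trapping disks in (b). With the relation defined by the existence of a single leaf, every vertex may have an outgoing edge. So there is no reason for a $\mathbf{t}$ to exist for which \emph{no} leaf of $\mathcal{G}'_{\mathbf{t}}$ misses both neighbouring bubbles. Your conclusion, with nothing removed from $\mathcal{G}'_{\mathbf{t}}$, is stronger than what can be proved.

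The missing ingredient is the Non-Crossing Principle (Proposition~\ref{prop:non-crossing-principle}). Only families of width $>1$ are prevented from crossing, so the argument must be quantitative, and property (3) is obtained by \emph{removing} a bounded amount of width.
\begin{itemize}
\item Let $\mathcal{E}_{t,l}$, $\mathcal{E}_{t,r}$ be the leaves of the trimmed lamination that avoid \emph{both} $\Bub^{l(t)}$ and $\Bub^{r(t)}$ and protect the left, resp.\ right, side. These are exactly the leaves violating (3). Avoiding the bubble on the unprotected side is also what stops them from escaping a trapping disk through that bubble.
\item One shows $W(\mathcal{E}_{\mathbf{t},l}\cup\mathcal{E}_{\mathbf{t},r})\le 4$ for some $\mathbf{t}$ and lets $\mathcal{G}_{\mathbf{t}}$ be the complement. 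This removal is part of the $O(1)$ in (1).
\item Your sink argument can be repaired by declaring $t\to r(t)$ when $W(\mathcal{E}_{t,r})>2$, and symmetrically for $l$. Then (a) holds: every leaf of $\mathcal{E}_{r(t),l}$ must cross every leaf of $\mathcal{E}_{t,r}$, which non-crossing forbids.
\item Claim (b) is replaced by the paper's iteration. Split off the outer $1$-buffer leaf $\gamma_j$ of $\mathcal{E}_{t_j,r}$. Use non-crossing of the remaining width-$>1$ part with the next family to force the next buffer leaf to land inside the disk cut out by $\gamma_j$. This shrinks the string of trapped bubbles until the last one cannot escape on either side.
\end{itemize}

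A smaller point concerns your trimming step. The curves from $\Bub^t$ landing in $[\crit_{-t-q_{n+2}},\crit_{-t+q_{n+2}}]$ have infinite width, since the bubble touches this arc at its root, so cone control does not bound them. What actually matters is that a leaf of $\mathcal{F}_t$ landing there must wind around $\overline{\D}$, and such curves carry only bounded width.
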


The argument below is purely topological.

\begin{proof}
    Pick $t \in \{q_n, \ldots,2q_n-1\}$. Each of the intervals $(\crit_{-l(t)}, \crit_{-t})$ and $(\crit_{-t}, \crit_{-r(t)})$ contains at most one critical point of $\fext^{t}$. 
    Consider the lamination $\mathcal{L}_t$ described in Lemma \ref{lem:canonical-lamination-Dk}.
    By Lemma \ref{lem:local-width}, the set $\mathcal{E}_t$ of leaves of $\mathcal{L}_t$ satisfying (2) must satisfy (1). 
    Denote by $\mathcal{E}_{t,l}$ (resp. $\mathcal{E}_{t,r}$) the set of leaves of $\mathcal{E}_{t}$ that avoid the bubbles $\Bub^{l(t)} \cup \Bub^{r(t)}$ and protect the interval between $\crit_{-t}$ and $\crit_{-l^2(t)}$ (resp. $\crit_{-r^2(t)}$). 
    We will show that $W(\mathcal{E}_{t,l} \cup \mathcal{E}_{t,r}) \leq 4$ for some $t$, so that the new lamination 
    \[
        \mathcal{G}_t := \mathcal{E}_t \backslash (\mathcal{E}_{t,l} \cup \mathcal{E}_{t,r})
    \]
    satisfies (3). To proceed, suppose for a contradiction that for all $t \in \{q_n, \ldots,2q_n-1\}$, $W(\mathcal{E}_{t,l} \cup \mathcal{E}_{t,r}) > 4$. 
    
    \begin{figure}
    \centering
\begin{tikzpicture}
    \node[anchor=south west,inner sep=0] (image) at (0,0) {\includegraphics[width=1\linewidth]{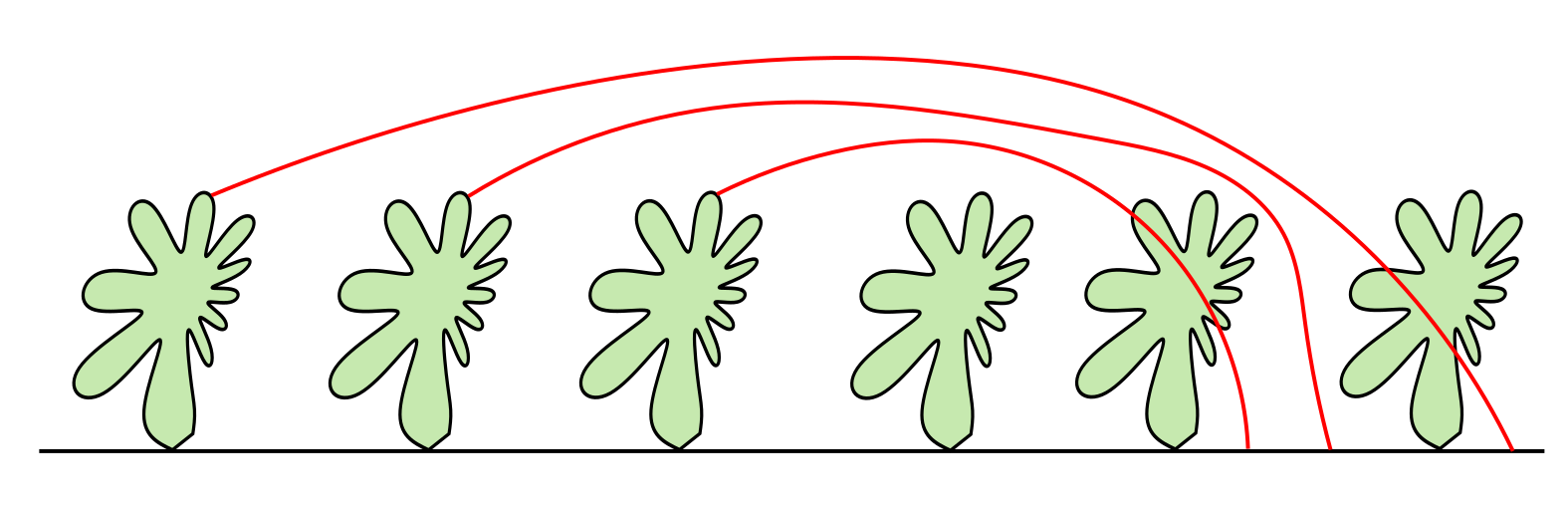}};
    \begin{scope}[
        x={(image.south east)},
        y={(image.north west)}
    ]
    
    \node [blue] at (0.11,0.05) {$\crit_{-t_0}$};
    \node [blue] at (0.11,0.11) {\small $\bullet$};
    \node [blue] at (0.273,0.05) {$\crit_{-t_1}$};
    \node [blue] at (0.273,0.11) {\small $\bullet$};
    \node [blue] at (0.433,0.05) {$\crit_{-t_2}$};
    \node [blue] at (0.433,0.11) {\small $\bullet$};
    \node [blue] at (0.605,0.05) {$\crit_{-t_3}$};
    \node [blue] at (0.605,0.11) {\small $\bullet$};
    \node [blue] at (0.749,0.05) {$\crit_{-t_4}$};
    \node [blue] at (0.749,0.11) {\small $\bullet$};
    \node [blue] at (0.918,0.05) {$\crit_{-t_5}$};
    \node [blue] at (0.918,0.11) {\small $\bullet$};
    \node [black] at (0.03,0.05) {$\partial\D$};
    \node [red] at (0.2,0.76) {$\mathcal{E}_{t_0,r}$};
    \node [red] at (0.37,0.63) {$\mathcal{E}_{t_1,r}$};
    \node [red] at (0.53,0.61) {$\mathcal{E}_{t_2,r}$};
    \end{scope}
\end{tikzpicture}
    \caption{Illustration of the proof of Lemma \ref{lem:blocking}. 
    Most curves in $\mathcal{F}_{t_3}$ are forced to cross either $\mathcal{E}_{t_2,r}$, $\Bub^{t_2}$, or $\Bub^{t_4}$.}
    \label{fig:blocking}
\end{figure}

    We start with any $t_0 \in \{q_n,\ldots, 2q_n-1\}$ and assume $W(\mathcal{E}_{t_0,r}) >2 $.
    There exists a unique leaf $\gamma_0$ of $\mathcal{E}_{t_0,r}$ that defines the outer $1$-buffer. 
    (See Appendix \ref{sec:appendix} for details.)
    More precisely, if we denote by $A_0$ the bounded connected component of $\Uext_k \backslash \gamma_0$, then $\mathcal{E}_{t_0,r}$ splits $\gamma_0$ into two sub-laminations $\mathcal{E}_{t_0,r}^+$ and $\mathcal{E}_{t_0,r}^-$ where $\mathcal{E}_{t_0,r}^-$ consists of leaves that are contained in $A_0$ and 
    \[
        W(\mathcal{E}_{t_0,r}^+) = 1, \qquad W(\mathcal{E}_{t_0,r}^-) = W(\mathcal{E}_{t_0,r}) -1 > 1.
    \]
    The disk $A_0$ contains a maximal non-empty string of consecutive bubbles $\Bub^{t_1}$, $\Bub^{t_2}$, $\Bub^{t_{3}}$, $\ldots \Bub^{t_{k_0}}$ where $t_{i}=r(t_{i-1})$ for all $i =1,\ldots, k_0$ for some $k_0 \geq 1$.
    
    By our assumption, $\mathcal{E}_{t_1,l}$ is disjoint from $\Bub^t$ so $\mathcal{E}_{t_1,l}$ must cross $\mathcal{E}_{t_0,r}^-$, which implies that $W(\mathcal{E}_{t_1,l}) \leq 1$. Therefore, $W(\mathcal{E}_{t_1,r}) > 3$. Similar to $\gamma_0$, we consider the unique leaf $\gamma_1$ that splits $\mathcal{E}_{t_1,r}$ into the outer $1$-buffer $\mathcal{E}_{t_1,r}^+$ and the complement $\mathcal{E}_{t_1,r}^-$. Since both $\mathcal{E}_{t_1,r}^-$ and $\mathcal{E}_{t_0,r}^+$ have width at least one, they cannot cross and so the endpoints of $\gamma_1$ must be contained in $A_0$. Let us define $A_1$ to be the bounded connected component of $\Uext_k \backslash \gamma_1$. Then, $\partial A_1 \cap \partial \D $ is contained in $\partial A_0 \cap \partial \D \backslash [\crit_{-t_0},\crit_{-t_1})$. Therefore, the disk $A_1$ contains a maximal non-empty string of consecutive bubbles $\Bub^{t_2}$, $\Bub^{t_3}$, $\ldots$, $\Bub^{t_{k_1}}$ for some integer $k_1 \in [2, k_0]$.

    Repeat the argument from the previous paragraph inductively. We obtain a sequence of integers $\{k_j\}_{j\geq 0}$ such that $j+1 \leq k_j \leq k_{j-1}$ for all $j$. 
    The sequence stops at the final index $j=j_\star$ when $k_{j_\star}=j_\star+1$. 
    See Figure \ref{fig:blocking} when $j_*=3$. 
    Since $\mathcal{E}_{t_{j_*}}$ is disjoint from the bubbles $\Bub^{t_{j_*-1}} \cup\Bub^{t_{j_*+1}}$, it must cross $\mathcal{E}_{t_{j_*-1},r}^-$ and thus its total width is at most one. This is a contradiction.
\end{proof}

\begin{theorem}[Amplification]
\label{thm:amplification}
    There exists a universal constant $C>0$ such that for all $n \geq 0$,
    \[
        W(\mathcal{F}_{q_{n}}) \geq \frac{4}{3} W(\mathcal{F}_{q_{n+4}}) - C.
    \]
\end{theorem}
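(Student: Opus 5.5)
The plan is to fix $n$ and apply Lemma~\ref{lem:blocking} at level $n+2$. We assume $W(\mathcal{F}_{q_{n+4}})$ is large; otherwise there is nothing to prove. Let $\mathbf{t}=\mathbf{t}(n+2)\in\{q_{n+2},\dots,2q_{n+2}-1\}$ be given by Lemma~\ref{lem:blocking}, with lamination $\mathcal{G}_{\mathbf{t}}$.

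\emph{Step 1: transfer the width of $\mathcal{F}_{q_{n+4}}$ to $\mathcal{F}_{\mathbf{t}}$.} Proposition~\ref{prop:weak-improvement} is applied within a single scale $[q_{m},q_{m+1}]$. Chaining it with $(a,a+b)=(q_{n+3},q_{n+4})$, then $(q_{n+2},\dots)$, and so on, I expect to obtain $W(\mathcal{F}_{\mathbf{t}})\ge W(\mathcal{F}_{q_{n+4}})-O(1)$. Here $\mathbf{t}$ lies in $[q_{n+2},q_{n+3}]$ because $2q_{n+2}-1<q_{n+3}+q_{n+2}$. If $\mathbf{t}>q_{n+3}$, I instead write $q_{n+4}=\mathbf{t}+b$ or pass through $q_{n+3}+q_{n+2}$. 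The integers must be arranged so that each application satisfies $q_m\le a<a+b\le q_{m+1}$; otherwise one routes through the nearest $q_m$ and pays only a bounded number of $O(1)$ losses. By Lemma~\ref{lem:blocking}(1), $W(\mathcal{G}_{\mathbf{t}})\ge W(\mathcal{F}_{q_{n+4}})-O(1)$.

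\emph{Step 2: split by submergence.} By Lemma~\ref{lem:blocking}(3), every leaf of $\mathcal{G}_{\mathbf{t}}$ meets $\Bub^{l(\mathbf{t})}\cup\Bub^{r(\mathbf{t})}$. Its landing point lies outside $[\crit_{-l^2(\mathbf{t})},\crit_{-r^2(\mathbf{t})}]$, so after crossing the neighboring bubble, the tail of the leaf still protects the relevant interval on the far side. The three cases of the outline are determined by which of $\mathbf{t},l(\mathbf{t}),r(\mathbf{t})$ is smallest, i.e.\ which bubble has the smallest generation.

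\emph{Case A: $\mathbf{t}$ is the smallest, say $\mathbf{t}<l(\mathbf{t}),r(\mathbf{t})$.} The leaves meeting $\Bub^{l(\mathbf{t})}$ contain two disjoint sub-arcs. One joins $\Bub^{\mathbf{t}}$ to $\Bub^{l(\mathbf{t})}$; the other joins $\Bub^{l(\mathbf{t})}$ to a far point. The parallel law gives a subfamily of width $W_l$ split into two overflowing pieces of widths $w_1,w_2$ with $w_1^{-1}+w_2^{-1}\le W_l^{-1}$, and similarly for the right side. Pushing forward by $\fext^{\mathbf{t}-q_{n+1}}$ (or the appropriate $\alpha'$-iterate $\fext^{q_{m}-q_{m-1}}$) via Lemma~\ref{lem:univalent-pushforward}, the first piece becomes a curve family emanating from $\Bub^{q_{n+1}}$ or $\Bub^{q_n}$, and the second becomes one emanating from the image of $\Bub^{l(\mathbf{t})}$. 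Each lands far enough to lie in $\mathcal{F}_{q_n}$ or $\mathcal{F}_{q_{n+1}}$ up to $O(1)$, using Lemma~\ref{lem:local-width} and Lemma~\ref{lem:control-critical-pt}.

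\emph{Cases B and C: $l(\mathbf{t})$ or $r(\mathbf{t})$ is the smallest.} Here one pushes forward until that neighbor's bubble becomes a bubble at a shallower combinatorial scale. The overflowing family then lies in $\mathcal{F}_{q_{n+1}}$ or $\mathcal{F}_{q_n}$. In each case, splitting $W(\mathcal{G}_{\mathbf{t}})=W_l+W_r$ and using the series/parallel rules yields two families in $\mathcal{F}_{q_n}\cup\mathcal{F}_{q_{n+1}}$ whose widths satisfy a convexity bound. After applying Proposition~\ref{prop:weak-improvement} once more to pass from $q_{n+1}$ to $q_n$, this should give $\max\ge\frac{4}{3}W(\mathcal{G}_{\mathbf{t}})-O(1)$. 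The constant $4/3$ would come from the worst splitting, roughly $W_l=W_r=W/2$ combined with a factor gained from the two-fold overflow; I would tune the bookkeeping to reach exactly $4/3$.

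The main obstacle is Step 2. One must verify that the pushed-forward pieces genuinely protect the required intervals $[\crit_{-k-q_m},\crit_{-k}]$ or $[\crit_{-k},\crit_{-k+q_m}]$ at the shallower scale. One must also check that the univalent pushforward does not meet external critical points. This is why only $\alpha'$-iterates are used and why the landing condition in Lemma~\ref{lem:blocking}(2) is needed. Achieving a gain strictly larger than $1$ requires careful extremal-length accounting, in particular keeping both pieces of each split leaf rather than discarding one.
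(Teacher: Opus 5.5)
Your overall frame agrees with the paper: apply the Blocking Lemma at level $n+2$, use almost monotonicity for $W(\mathcal{G}_{\mathbf{t}})\ge W(\mathcal{F}_{q_{n+4}})-O(1)$, then prove $W(\mathcal{G}_{\mathbf{t}})\le\frac34W(\mathcal{F}_{q_n})+O(1)$. Step 1 is fine. Step 2, however, is the whole content of the theorem, and it has a genuine gap.

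\textbf{The push-forward goes the wrong way.} In Case A ($\mathbf{t}<l(\mathbf{t})$) you push by $\fext^{\mathbf{t}-q_{n+1}}$. This sends $\Bub^{\mathbf{t}}$ to $\Bub^{q_{n+1}}$, but it sends the deeper bubble $\Bub^{l(\mathbf{t})}$ to another primary bubble (e.g.\ $\Bub^{q_{n+2}}$ when $l(\mathbf{t})=\mathbf{t}+q_{n+2}-q_{n+1}$). That bubble is an interior obstacle of $\Uext_{q_{n+1}}$, not part of its boundary. So the image of your first piece ends in the interior and is not a curve of $\mathcal{F}_{q_{n+1}}$. Extending it to $\partial\D$ only decreases width, which is the wrong inequality for an upper bound. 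Cases B and C have the same defect: if $\mathbf{t}$ is deeper than the neighbor, pushing the neighbor's bubble to a principal bubble does not absorb $\Bub^{\mathbf{t}}$.

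The iterate must send the \emph{deeper} of the two bubbles to $\Bub^{q_n}$, so that the shallower one falls into $\overline{\D}$ or outside the pulled-back disk. For $\mathbf{t}<l,r$ the paper pushes by $\fext^{r-q_n}$. Since $r-q_n=\mathbf{t}+q_{n+1}-q_n>\mathbf{t}$, this maps $\Bub^r$ onto $\Bub^{q_n}$ and $\partial\Bub^{\mathbf{t}}$ into $\overline{\D}$. Hence both the arc before the first visit to $\Bub^r$ and the arc after the last visit become proper arcs of $\Uext_{q_n}$ emanating from $\Bub^{q_n}$, which lie in $\mathcal{F}_{q_n}$ after discarding $O(1)$ width. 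For $\mathbf{t}>l,r$ the paper pushes by $\fext^{\mathbf{t}-q_n}$, and then $\Bub^l,\Bub^r$ lie outside $\Uext^{\mathbf{t}-q_n}_{q_n}$.

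\textbf{The bookkeeping gives no gain.} Suppose every piece did have width $\le W(\mathcal{F}_{q_n})+O(1)$. Your scheme splits $W(\mathcal{G}_{\mathbf{t}})=W_l+W_r$ and applies the series law on each side with independent bounds on the two pieces. That yields only $W_l,W_r\le\frac12W(\mathcal{F}_{q_n})+O(1)$, hence $W(\mathcal{G}_{\mathbf{t}})\le W(\mathcal{F}_{q_n})+O(1)$. There is no amplification, and no tuning recovers $\frac43$.

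The paper's gain comes from coupling the pieces in one of two ways.
\begin{itemize}
\item When $\mathbf{t}$ is deepest, the series law is applied to all of $\mathcal{G}_{\mathbf{t}}$ before splitting. Every leaf overflows its initial restriction in $\mathcal{X}_{\mathbf{t}}$, a single family of width $\le W(\mathcal{F}_{q_n})+O(1)$. Disjointly, it overflows its tail after the last visit to $\Bub^l$ or $\Bub^r$; these tails lie in $\mathcal{F}_l\cup\mathcal{F}_r$, of total width $\le 2W(\mathcal{F}_{q_n})+O(1)$. This gives $W(\mathcal{G}_{\mathbf{t}})\le W(\mathcal{F}_{q_n})\oplus 2W(\mathcal{F}_{q_n})+O(1)=\frac23W(\mathcal{F}_{q_n})+O(1)$.
\item When $\mathbf{t}$ is shallowest, the two pieces of the leaves of $\mathcal{S}_r$ are pushed by the same map into disjoint sublaminations of the one family $\mathcal{F}_{q_n}$. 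So $w^-+w^+\le W(\mathcal{F}_{q_n})+O(1)$, and $W(\mathcal{S}_r)\le w^-\oplus w^+\le\frac14(w^-+w^+)$. The same holds for $\mathcal{S}_l$, giving $\frac12$ in total.
\end{itemize}
The mixed case combines the two. What produces a factor below $1$ is that the pieces share a single target family. Bounding each piece independently, possibly in different families $\mathcal{F}_{q_n}$ and $\mathcal{F}_{q_{n+1}}$, cannot do this.
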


\begin{proof}
    Fix $n\geq 0$ and consider $\mathbf{t} = \mathbf{t}(n+2) \in \{q_{n+2},\ldots,2q_{n+2}-1\}$ together with the lamination $\mathcal{G}_{\mathbf{t}}$ from Lemma \ref{lem:blocking}. 
    Since $q_{n+2} \leq t< q_{n+4}$, we have
    \begin{equation}
    \label{eqn:n-to-t}
        W(\mathcal{G}_{\mathbf{t}}) \geq W(\mathcal{F}_{q_{n+4}}) - O(1).
    \end{equation}
    according to Proposition \ref{prop:weak-improvement}.
    
    Let us identify every leaf $\gamma$ of $\mathcal{G}_{\mathbf{t}}$ as a parametrized curve $\gamma: [0,1] \to \C$ where $\gamma(0)$ is in $\Bub^{\mathbf{t}}$ and $\gamma(1)$ is a point on $\partial \D$. 
    There exists some smallest time $s_\gamma \in (0,1]$ such that $\gamma|_{(0,s_\gamma)}$ is a proper curve in the punctured disk $\Uext^{{\mathbf{t}}-q_{n}}_{q_{n}}$. 
    By Lemma \ref{lem:univalent-pushforward}, there exists a sub-lamination $\mathcal{G}^{\new}_{\mathbf{t}}$ of $\mathcal{G}_{\mathbf{t}}$ with width
    \[
         W(\mathcal{G}^{\new}_{\mathbf{t}}) \geq W(\mathcal{G}_{\mathbf{t}}) - O(1)
    \]
    such that the restriction 
    \[
    \mathcal{X}_{\mathbf{t}} := \{\gamma|_{(0,s_\gamma)} \: : \: \gamma \in \mathcal{G}^{\new}_{\mathbf{t}}\}
    \]
    can be univalently pushed forward by $\fext^{{\mathbf{t}}-q_{n}}$ to a proper lamination $\tilde{\mathcal{X}}_{\mathbf{t}}$ in $\Uext_{q_{n}}$.
    
    The critical points of $\fext^{{\mathbf{t}}-q_{n}}$ closest to the root $\crit_{-{\mathbf{t}}}$ are $\crit_{-{\mathbf{t}}+q_{n+1}}$ on the left and $\crit_{-{\mathbf{t}}+q_{n+2}}$ on the right. 
    Every leaf in the new lamination $\tilde{\mathcal{X}}_{\mathbf{t}}$ starts from the bubble $\Bub^{q_{n}}$ and ends at a point on $\partial \D \cup \partial \Bub^{q_n}$ outside of $[\crit_{q_{n+1}-q_{n}},\crit_{q_{n+2}-q_{n}}]$. 
    In particular, by Lemma \ref{lem:local-width},
    \[
        W(\mathcal{X}_{\mathbf{t}}) \leq W(\mathcal{F}_{q_{n}}) + O(1).
    \]

    Consider $l=l({\mathbf{t}})$ and $r=r({\mathbf{t}})$, and assume without loss of generality that $l<r$. 
    We will now split $\mathcal{G}^{\new}_{\mathbf{t}}$ into a disjoint union of two sub-laminations $\mathcal{S}_l$ and $\mathcal{S}_r$, where every leaf of $\mathcal{S}_l$ (resp. $\mathcal{S}_r$) intersects the bubble $\Bub^{l}$ (resp. $\Bub^r$) before it intersects $\Bub^r$ (resp. $\Bub^l$) if it ever happens. 

    We claim that the following inequality holds:
    \begin{align}
        \label{ineq:amplification}
        W(\mathcal{G}_{\mathbf{t}}) \leq \frac{3}{4} W(\mathcal{F}_{q_{n}}) + O(1).
    \end{align}
    Together with (\ref{eqn:n-to-t}), this inequality implies the desired inequality.
    To prove it, there are three cases to consider. 
    \vspace{0.05in}

\begin{figure}
    \centering
\begin{tikzpicture}
    \node[anchor=south west,inner sep=0] (image) at (0,0) {\includegraphics[width=1\linewidth]{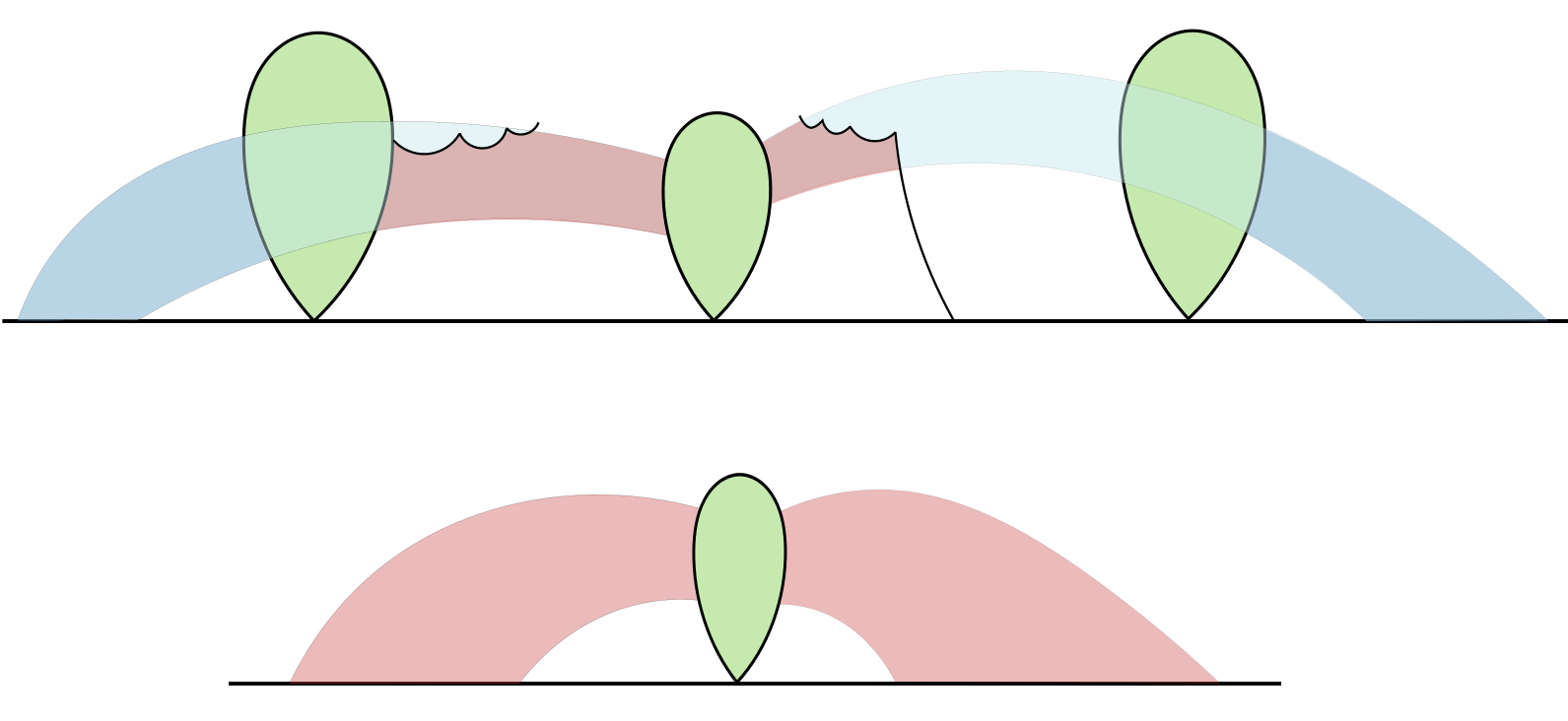}};
    \begin{scope}[
        x={(image.south east)},
        y={(image.north west)}
    ]
    \node [blue] at (0.1,0.67) {$\mathcal{Y}_{l}$};
    \node [blue] at (0.875,0.67) {$\mathcal{Y}_{r}$};
    \node [red!75!black, font=\bfseries] at (0.36,0.75) {$\mathcal{X}_{\mathbf{t}}$};
    \node [red!75!black, font=\bfseries] at (0.52,0.78) {$\mathcal{X}_{\mathbf{t}}$};
    \node [red!75!black, font=\bfseries] at (0.33,0.2) {$\tilde{\mathcal{X}}_{\mathbf{t}}$};
    \node [red!75!black, font=\bfseries] at (0.6,0.2) {$\tilde{\mathcal{X}}_{\mathbf{t}}$};
    \node [black] at (0.32,0.63) {\small $\Uext^{t-q_{n}}_{q_{n}}$};

    \node [black] at (0.455,0.555) {$\bullet$};
    \node [black] at (0.455,0.5) {$\crit_{-{\mathbf{t}}}$};
    \node [black] at (0.2,0.555) {$\bullet$};
    \node [black] at (0.2,0.5) {$\crit_{-l}$};
    \node [black] at (0.757,0.555) {$\bullet$};
    \node [black] at (0.757,0.5) {$\crit_{-r}$};
    \node [black] at (0.47,0.05) {$\bullet$};
    \node [black] at (0.47,0) {$\crit_{-q_{n}}$};

    \draw[line width=0.5pt,-latex] (0.4,0.63) -- (0.4,0.35);
    \node [black] at (0.36,0.45) {$\fext^{{\mathbf{t}}-q_{n}}$};
    \end{scope}
\end{tikzpicture}
    \caption{Illustration of Case 1 where ${\mathbf{t}}>r$ and ${\mathbf{t}}>l$}
    \label{fig:case-1}
\end{figure}

    \noindent \underline{Case 1:} ${\mathbf{t}}>r$ and ${\mathbf{t}}>l$
    \vspace{0.05in}
    
    Since $r={\mathbf{t}}-(q_{n+2}-q_{n+1}) \leq {\mathbf{t}}-q_{n}$, the bubble $\Bub^{r}$ is disjoint from the (open) disk $\Uext^{{\mathbf{t}}-q_{n}}_{q_{n}}$ as shown on the Figure~\ref{fig:case-1}.
    In particular, the last time $\nu_\gamma \in (0,1)$ where $\gamma(\nu_\gamma)$ is in $\Bub^r$ must satisfy $\nu_\gamma \geq s_{\gamma}$. 
    Consider the restriction $\mathcal{Y}_r := \{\gamma|_{(\nu_\gamma,1)} \: : \: \gamma \in \mathcal{S}_r\}$, which is disjoint from $\mathcal{X}_{\mathbf{t}}$.   Observe that $\mathcal{Y}_r$ is contained in $\mathcal{F}_r$. 
    Since $q_{n}<r<q_{n+4}$, by Proposition \ref{prop:weak-improvement},
    \[
        W(\mathcal{Y}_r) \leq W(\mathcal{F}_r) \leq W(\mathcal{F}_{q_{n}}) + O(1).
    \]
    Similarly, $\mathcal{S}_l$ also admits a restriction $\mathcal{Y}_l$ that is disjoint from $\mathcal{X}_t$ and has width
    \[
        W(\mathcal{Y}_l) \leq W(\mathcal{F}_{q_{n}}) + O(1).
    \]
    Then, $\mathcal{G}_t^{\new}$ overflows two disjoint restrictions $\mathcal{X}_{\mathbf{t}}$ and $\mathcal{Y}_l \cup \mathcal{Y}_r$. By the Series Law,
    \begin{align*}
        W(\mathcal{G}_{\mathbf{t}}) \leq W(\mathcal{X}_{\mathbf{t}}) \oplus \big[ W(\mathcal{Y}_l) + W(\mathcal{Y}_r) \big] + O(1) \leq W(\mathcal{F}_{q_{n}}) \oplus \big[ 2 W(\mathcal{F}_{q_{n}})] + O(1).
    \end{align*}  
    Since $1 \oplus 2 = \frac{3}{4}$ this inequality implies (\ref{ineq:amplification}). 
    \vspace{0.1in}

\begin{figure}
    \centering
\begin{tikzpicture}
    \node[anchor=south west,inner sep=0] (image) at (0,0) {\includegraphics[width=1\linewidth]{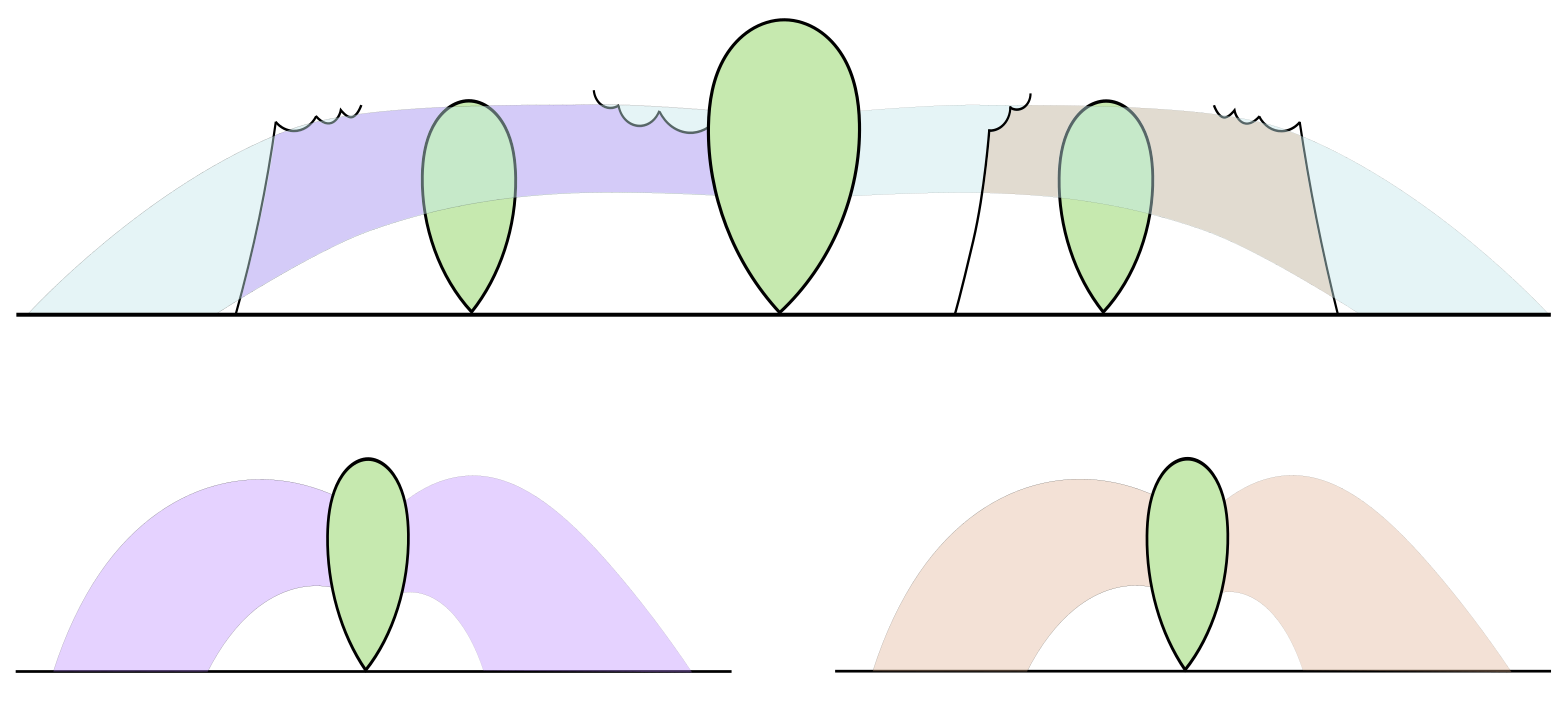}};
    \begin{scope}[
        x={(image.south east)},
        y={(image.north west)}
    ]
    
    \node [black] at (0.303,0.555) {\small $\bullet$};
    \node [black] at (0.303,0.50) {$\crit_{-l}$};
    \node [black] at (0.5,0.555) {\small $\bullet$};
    \node [black] at (0.5,0.50) {$\crit_{-{\mathbf{t}}}$};
    \node [black] at (0.708,0.555) {\small $\bullet$};
    \node [black] at (0.708,0.50) {$\crit_{-r}$};
    \node [black] at (0.235,0.055) {\small $\bullet$};
    \node [black] at (0.235,0.005) {$\crit_{-q_{n}}$};
    \node [black] at (0.76,0.055) {\small $\bullet$};
    \node [black] at (0.76,0.005) {$\crit_{-q_{n}}$};
    \node [orange!75!black] at (0.66,0.78) {$\mathcal{Z}_r^-$};
    \node [orange!75!black] at (0.78,0.75) {$\mathcal{Z}_r^+$};
    \node [violet] at (0.38,0.79) {$\mathcal{Z}_l^-$};
    \node [violet] at (0.22,0.75) {$\mathcal{Z}_l^+$};
    \node [orange!75!black] at (0.66,0.2) {$\tilde{\mathcal{Z}}_r^-$};
    \node [orange!75!black] at (0.85,0.2) {$\tilde{\mathcal{Z}}_r^+$};
    \node [violet] at (0.31,0.2) {$\tilde{\mathcal{Z}}_l^-$};
    \node [violet] at (0.13,0.2) {$\tilde{\mathcal{Z}}_l^+$};
    \node [black] at (0.41,0.63) {\scalebox{0.8}{$\Uext^{l-q_{n}}_{q_{n}}$}};
    \node [black] at (0.655,0.63) {\scalebox{0.8}{$\Uext^{r-q_{n}}_{q_{n}}$}};

    \draw[line width=0.5pt,-latex,dashed] (0.21,0.62) -- (0.21,0.35);
    \node [black] at (0.17,0.46) {$\fext^{l-q_{n}}$};

    \draw[line width=0.5pt,-latex,dashed] (0.79,0.63) -- (0.79,0.37);
    \node [black] at (0.835,0.46) {$\fext^{r-q_{n}}$};
    \end{scope}
\end{tikzpicture}
    \caption{Illustration of Case 2 where ${\mathbf{t}}<l$ and ${\mathbf{t}}<r$}
    \label{fig:case-2}
\end{figure}

    \noindent \underline{Case 2:} ${\mathbf{t}}<l$ and ${\mathbf{t}}<r$
    \vspace{0.05in}
    
    Since $r-q_{n} = {\mathbf{t}} + q_{n+1}-q_{n} > {\mathbf{t}}$, the boundary of the bubble $\Bub^{\mathbf{t}}$ is contained in the boundary of $\Uext^{r-q_{n}}_{q_{n}}$. 
    For every leaf $\gamma$ of $\mathcal{S}_r$, let $\eta^-_\gamma, \eta^+_\gamma \in (0,1)$ denote the first and the last times such that a point in $\gamma$ is contained in the bubble $\Bub^r$. 
    We will also select times $\xi^-_\gamma \in [0,\eta^-_\gamma)$ and $\xi^+_\gamma \in (\eta^+_\gamma,1]$ such that both $\gamma^- := \gamma|_{(\xi^-_\gamma,\eta^-_\gamma)}$ and $\gamma^+ :=\gamma|_{(\eta^+_\gamma,\xi^+_\gamma)}$ are proper curves in the punctured disk $\Uext^{r-q_{n}}_{q_{n}}$. 
    For $\bullet \in \{+,-\}$, consider the restrictions 
    \[
    \mathcal{Z}^\bullet_r := \{\gamma^\bullet \: : \: \gamma \in \mathcal{S}_r\}. 
    \]
    
    By Lemma \ref{lem:univalent-pushforward}, we can remove $O(1)$ buffers from $\mathcal{Z}^\bullet_r$ to get a sub-lamination that can be pushed forward by $\fext^{r-q_{n}}$ to get a proper lamination $\tilde{\mathcal{Z}}^\bullet_r$ in $\Uext_{q_{n}}$. 
    Observe that $\tilde{\mathcal{Z}}^+_r$ and $\tilde{\mathcal{Z}}^-_r$ are two disjoint laminations whose leaves start from $\partial\Bub^{q_{n}}$ and land at points on either $\partial \Bub^{q_n}$ or $\partial \D$ of combinatorial distance greater than $l_{n+1}$ away from the root $\crit_{-q_{n}}$. 
    By Lemma \ref{lem:local-width}, we can again remove $O(1)$ buffers
    from $\tilde{\mathcal{Z}}^+_r$ and $\tilde{\mathcal{Z}}^-_r$ and obtain a sub-lamination $\check{\mathcal{Z}}_r$ of $\tilde{\mathcal{Z}}^+_r \cup \tilde{\mathcal{Z}}^-_r$ that is contained in $\mathcal{F}_{q_n}$. 
    Thus, by the Series Law,
    \begin{align*}
        W(\mathcal{S}_r) \leq W(\mathcal{Z}^+_r) \oplus W(\mathcal{Z}^-_r)
        &\leq \frac{1}{4} \big[ W(\mathcal{Z}^+_r) + W(\mathcal{Z}^-_r) \big] \\
        &\leq \frac{1}{4} \big[ W(\tilde{\mathcal{Z}}^+_r) + W(\tilde{\mathcal{Z}}^-_r) \big] +O(1) \\
        &\leq \frac{1}{4} W(\mathcal{F}_{q_{n}}) + O(1).
    \end{align*}
    Similarly, for $\mathcal{S}_l$, we also have
    \[
        W(\mathcal{S}_l) \leq \frac{1}{4} W(\mathcal{F}_{q_{n}}) + O(1).
    \]
    In total, we have
    \begin{align*}
        W(\mathcal{G}_{\mathbf{t}}) &\leq W(\mathcal{S}_l) + W(\mathcal{S}_r) + O(1) \leq \frac{1}{2} W(\mathcal{F}_{q_{n}}) + O(1),
    \end{align*}  
    which is stronger than the desired (\ref{ineq:amplification}).
    \vspace{0.1in}

    \noindent \underline{Case 3:} $l < {\mathbf{t}} < r$
    \vspace{0.05in}

    This is a mixture of the first two cases. 
    The same conclusion is attained:
    \begin{align*}
        W(\mathcal{G}_{\mathbf{t}}) &\leq W(\mathcal{X}_t) \oplus W(\mathcal{Y}_l) + W(\mathcal{S}_r) + O(1) \\
        &\leq W(\mathcal{F}_{q_{n}}) \oplus W(\mathcal{F}_{q_{n}}) + \frac{1}{4} W(\mathcal{F}_{q_{n}}) + O(1) \\
        &\leq \frac{3}{4} W(\mathcal{F}_{q_{n}}) + O(1). \qedhere
    \end{align*}  
\end{proof}

\begin{proof}[Proof of Theorem \ref{thm:curve-families}]
    By Proposition \ref{prop:weak-improvement}, we just need to obtain a uniform upper bound for the width of $\mathcal{F}_{q_n}$ for all $n$.
    Since $1 < 1.2 < \frac{4}{3}$, the Amplification Theorem above can be reformulated as follows.
    There is a universal constant $K>0$ such that for $n \geq 0$,
    \begin{align}
    \label{eqn:amplify}
        W(\mathcal{F}_{q_{n+4}}) \geq K \quad \Longrightarrow \quad W(\mathcal{F}_{q_{n}}) \geq 1.2 \cdot W(\mathcal{F}_{q_{n+4}}).
    \end{align}
    We know from (\ref{eqn:W-linear}) that there exists another universal constant $K'>0$ such that 
    \begin{align}
    \label{eqn:initial-bound}
        \max_{0\leq i \leq 3} W(\mathcal{F}_{q_i}) < K'.
    \end{align}
    We will show that for all $k$, the width of $\mathcal{F}_k$ is bounded above by $\max\{K,K'\}$.
    Suppose for a contradiction that this fails for some $k \in \N$, which we can write as $k=4n+i$ for some $n \geq 1$ and $i \in \{0,1,2,3\}$. 
    By (\ref{eqn:amplify}), we have 
    \[
        W(\mathcal{F}_{q_i}) \geq 1.2^n W(\mathcal{F}_{q_k}) \geq 1.2^n \cdot \max\{K,K'\} > K',
    \]
    which is a contradiction to (\ref{eqn:initial-bound}).
\end{proof}

\begin{remark}
    One may wonder whether or not the discussion in this section also works for pseudo-bubbles. 
    While the argument in \S\ref{ss:preparation} can be repeated, an issue that arises in the amplification process is that a priori we do not know whether pseudo-bubbles $\hat{\Bub}^{q_n}$, $\ldots$, $\hat{\Bub}^{2q_n-1}$ are pairwise disjoint.
    In the next section, we will apply these bounds to obtain uniform control of pseudo-bubbles.
\end{remark}

\section{Pseudo-bubble bounds}
\label{sec:pseudo-bubble-bounds}

In this section, we will prove Theorem \ref{main-thm-01}. 
Firstly, we will discuss how to transfer the uniform bounds described in Theorem \ref{thm:curve-families} to obtain angular and size control of pseudo-bubbles of $f = f_\theta$ for a fixed $\theta \in \Theta_{\textnormal{bdd}}$ (Theorem \ref{thm:pseudo-bubble-bounds}). 
Secondly, we will use nests of tilings to show that pseudo-bubbles must be uniform quasidisks (Theorem \ref{thm:uniform-quasiconformality}).

\subsection{Uniform size and angle}
\label{ss:uniform-size}
Let us use the notation $\mathsf{C}_k(K)$ introduced in \S\ref{ss:bubbles-and-pseudo-bubbles}. 
The next theorem states that we have uniform angular and size control in external coordinates.

\begin{theorem}
\label{thm:size-external-pseudo-bubbles}
    There exists a universal constant $K>1$ such that for every $k,n \in \N$ with $q_n \leq k < q_{n+1}$, the pseudo-bubble $\hat{\Bub}^k$ of $\fext$ rooted at $\crit_{-k}$ is contained in the cone $\mathsf{C}_k(K)$ and it has diameter 
        \[
        \frac{1}{K}|\crit_{-k} - \crit_{-k+q_{n}}| \leq \textnormal{diam}(\hat{\Bub}^k) \leq K |\crit_{-k} - \crit_{-k+q_{n}}|.
        \]
\end{theorem}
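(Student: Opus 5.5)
We will derive the statement from Theorem~\ref{thm:curve-families}, Lemma~\ref{lem:control-near-roots}, and real a priori bounds (Theorem~\ref{thm:R-APB}). The proof splits into three parts: an upper bound on the diameter and cone containment for the bubble $\Bub^k$, a transfer of these bounds to the pseudo-bubble $\hat{\Bub}^k$, and a lower bound on the diameter.

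\emph{Upper bound and cone containment for $\Bub^k$.} Let $J$ be the interval $[\crit_{-k-q_n},\crit_{-k+q_n}]$. By Theorem~\ref{thm:R-APB}, $J$ has Euclidean length comparable to $|\crit_{-k}-\crit_{-k+q_n}|$. Consider the region $\Omega$ bounded by $J$ and the hyperbolic geodesic with the same endpoints as $J$; it is a round half-disk-like domain.
\begin{itemize}
\item Suppose $\Bub^k$ leaves a $K_0$-enlargement of $\Omega$. Then $\Bub^k$ contains a connected piece joining $\crit_{-k}$ to a curve far from $J$ relative to $|J|$.
\item Such a piece produces a wide family of curves in $\Uext_k$. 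These curves emanate from $\Bub^k$ and protect one of the two halves of $J$, namely the curves separating the two halves of $J$ from the far part of $\partial\D$. By Proposition~\ref{prop:log-rule}-type estimates, the width of this family is at least of order $\log K_0$.
\item This contradicts $W(\mathcal{F}_k)\le K$. Hence $\textnormal{diam}(\Bub^k)\preceq |J|$.
\end{itemize}
Cone containment for $\Bub^k$ follows by a similar argument. Near the root, Lemma~\ref{lem:control-near-roots} already gives containment inside the region bounded by $\sigma_k$. Away from the root, a point of $\Bub^k$ at large hyperbolic distance from $\Gamma_k$ lies close to $\partial\D$ at a distance from $\crit_{-k}$ much larger than its height. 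The piece of $\Bub^k$ reaching it creates a wide curve family protecting a subinterval of $J$, which again contradicts Theorem~\ref{thm:curve-families}. To make this precise, I would map $\Omega$ to a standard half-plane and use the bounded geometry of $J$.

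\emph{Passage from $\Bub^k$ to $\hat{\Bub}^k$.} Write $\hat{\Bub}^k=\Bub^k\cup\{\text{filled dams}\}$. Each dam of $\hat{\Bub}^0$ is surrounded by a collar of modulus at least $\boldsymbol{\delta}$, and the collars avoid the critical value. I would pull these collars back under $\fext^k$, which is univalent on the relevant components away from the critical values of $\fext^k$. This requires checking, via Lemma~\ref{lem:control-critical-pt}, that the pulled-back collars avoid $\partial\D$ and the other bubbles, so that the pullback is well defined. Each pulled-back collar has modulus at least $\boldsymbol{\delta}$ and surrounds a dam whose endpoints lie on $\Bub^k$. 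A filled fjord of $\hat{\Bub}^k$ is therefore enclosed by an annulus of definite modulus whose inner boundary touches $\Bub^k$ on both ends. Consequently its diameter is comparable to the diameter of the corresponding boundary piece of $\Bub^k$, and it stays within bounded hyperbolic distance of $\Bub^k$. This transfers both the diameter upper bound and the cone containment from $\Bub^k$ to $\hat{\Bub}^k$, with a worse constant.

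\emph{Lower bound on the diameter.} Push forward by $\fext^{k}$. A nearby critical point of $\fext^k$ lies at distance comparable to $|J|$, so by Koebe distortion on the neighborhood $\Rext$ from Lemma~\ref{lem:control-critical-pt}, $\fext^k$ acts with bounded distortion near $\crit_{-k}$ at scale $|J|$. By Lemma~\ref{lem:main-bubble}, $\Bub^0$ has definite size at the image scale, so a preimage of definite relative size must exit a ball of radius $c|J|$ around $\crit_{-k}$.

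\emph{Expected main obstacle.} The hardest step is the first one: converting a large or non-conical $\Bub^k$ into a genuinely wide subfamily of $\mathcal{F}_k$. The difficulty is that the bubble might wander near $\partial\D$ and land far away. I would handle this by first using Lemma~\ref{lem:control-near-roots} to control the bubble near its root, and then using the definition of $\mathcal{F}_k$ (curves protecting $[\crit_{-k-q_n},\crit_{-k}]$ or $[\crit_{-k},\crit_{-k+q_n}]$) together with real bounds to produce a lower bound of order $\log(\textnormal{diam}/|J|)$.
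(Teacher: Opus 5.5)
Your overall structure follows the paper. The paper gets cone containment and the two-sided diameter bound for the bubble $\Bub^k$ from Lemma~\ref{lem:control-near-roots} and Theorem~\ref{thm:curve-families}. It then passes to $\hat{\Bub}^k$ by showing that each dam has uniformly bounded hyperbolic diameter, lifting a collar neighborhood $\Next_0$ of the corresponding dam of $\hat{\Bub}^0$ univalently under $\fext^k$.

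Your half-annulus argument for $\textnormal{diam}(\Bub^k)\preceq|J|$ is sound: sub-arcs of semicircles about $\crit_{-k}$ of radius $\gg|J|$, running from $\Bub^k$ to $\partial\D$, do protect a full half of $J$. Your Koebe lower bound is also sound, provided you run it with the branch of $\fext^{-k}$ on a disk about $\crit_0$ of radius $\asymp|\crit_0-\crit_{q_n}|$ avoiding $\crit_1,\dots,\crit_k$, not with the rectangle of Lemma~\ref{lem:control-critical-pt}.

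Two small corrections in the collar step:
\begin{itemize}
\item The collar surrounds the dam, not the fjord. What you obtain is a bound on the hyperbolic diameter of each dam. The filled fjord, being enclosed by the dam and an arc of $\partial\Bub^k$, is then trapped in the cone by topology.
\item No appeal to Lemma~\ref{lem:control-critical-pt} is needed. $\fext^k$ has no finite singular values in $\C\setminus\overline{\D}$, so any disk there lifts univalently.
\end{itemize}

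\textbf{The genuine gap is cone containment away from the root.} Theorem~\ref{thm:curve-families} only controls curves protecting a \emph{full} half, $[\crit_{-k-q_n},\crit_{-k}]$ or $[\crit_{-k},\crit_{-k+q_n}]$. A wide family protecting a proper subinterval of $J$ is not contained in $\mathcal{F}_k$, so it contradicts nothing. Lemma~\ref{lem:control-near-roots} only covers the region under $\sigma_k$, whose feet are $\crit_{-k\mp q_{n+1}}$. The remaining stretches of $J$, e.g.\ between $\crit_{-k-q_{n+1}}$ and $\crit_{-k+q_n}$, can have length comparable to $|J|$ (for instance when $a_{n+2}=1$).

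Suppose $\Bub^k$ dips to height $h\ll|J|$ above a point $y$ of such a stretch, at distance $\asymp|J|$ from both of its endpoints. Then the semicircle sub-arcs about $y$ land inside $J$ on both sides. Only semicircles of radius in a bounded ratio range reach past $\crit_{-k+q_n}$, so the dip gives no lower bound on $W(\mathcal{F}_k)$, and your argument does not exclude it.

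The missing input is Koebe, not the width bound:
\begin{itemize}
\item The critical points of $\fext^k$ nearest to $\crit_{-k}$ are $\crit_{-k+q_n}$ and $\crit_{-k+q_{n-1}+aq_n}$, where $a=\lfloor (k-q_{n-1})/q_n\rfloor$.
\item The latter lies at combinatorial distance $l_{n-1}-al_n\ge l_n+l_{n+1}$ from $\crit_{-k}$, i.e.\ beyond $\crit_{-k-q_n}$.
\item Hence, for $y$ at distance $\succeq|J|$ from $\crit_{-k+q_n}$, the branch of $\fext^{-k}$ following $\partial\D$ has bounded distortion on a disk about $\fext^k(y)$ of radius $\asymp|\crit_0-\crit_{q_n}|$.
\item A low point of $\Bub^k$ over $y$ is therefore mapped to a point of $\Bub^0$ whose height is much smaller than its distance $\asymp|\crit_0-\crit_{q_n}|$ from $\crit_0$. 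This contradicts $\Bub^0\subset\mathsf{C}_0(K)$ from Lemma~\ref{lem:main-bubble}.
\end{itemize}
With this in place, the bound $W(\mathcal{F}_k)\le K$ is only needed for low points near or beyond $\crit_{-k\pm q_n}$. There the sub-arcs landing on the far side genuinely protect a full half of $J$.
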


\begin{proof}
    Fix integers $k,n \in \N$ as stated above.
    Lemma \ref{lem:control-near-roots} and Theorem \ref{thm:curve-families} imply that the bubble $\Bub^k$ lies in some cone $\mathsf{C}_k(K')$ for some universal $K'>0$ and that it has diameter comparable to $|\crit_{-k}-\crit_{-k+q_n}|$. Recall that the boundary of $\hat{\Bub}^k$ consists of parts of the boundary of $\Bub^k$ together with a collection of dams. In order to prove the theorem, it then suffices to show that every dam on the boundary of $\hat{\Bub}^k$ has uniformly bounded hyperbolic diameter in $\C \backslash \overline{\D}$.
    
    Consider any dam $\beta$ on the boundary of the pseudo-bubble $\hat{\Bub}^k$ of $\fext$. 
    It is the lift of a dam $\beta_0$ of the main pseudo-bubble $\hat{\Bub}^0$ attached to the critical point. 
    By construction, each dam comes with a collar, so there exists a small disk neighborhood $\Next_0$ of $\beta_0$ disjoint from $\overline{\D}$ such that the annulus $\Next_0 \backslash \beta_0$ has modulus $ \succ 1$. 
    The disk $\Next_0$ univalently lifts under $\fext^k$ to a disk neighborhood $\Next$ of $\beta$ disjoint from $\overline{\D}$. 
    Since $\Next \backslash \beta$ has definite modulus, every dam $\beta$ clearly has uniformly bounded hyperbolic diameter in $\C \backslash \overline{\D}$.
\end{proof}

To transfer to the dynamical plane of $f$, we need the following lemma. 
For $k \in \N$ and $K>0$, denote 
\[
C_k(K) := \psi^{-1}(\mathsf{C}_k(K)),
\]
the corresponding cone in the dynamical plane of $f$.
Recall the combinatorial threshold $\Mbold$ in the construction of the pseudo-Siegel disk $\hat{Z} = \hat{Z}(\Mbold)$ described in \S\ref{ss:pseudo-siegel}. 

\begin{lemma}
\label{lem:disjointness}
    The combinatorial threshold $\Mbold$ of the pseudo-Siegel disk $\hat{Z} = \hat{Z}(\Mbold)$ can be chosen to be sufficiently high such that the following additional property is satisfied. 
    There exists a universal constant $K>1$ such that for all $n \geq 0$,
    \begin{enumerate}[label=\textnormal{(\arabic*)}]
        \item for every $k \in \{q_n,\ldots,q_{n+1}-1\}$, the cone $C_{k}(K)$ is disjoint from $\hat{Z}^n$;
        \item the cone $C_{q_n}(K)$ is also disjoint from $\hat{Z}$.
    \end{enumerate}
\end{lemma}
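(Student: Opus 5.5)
The plan is to work in external coordinates, where $C_k(K)=\psi^{-1}(\mathsf{C}_k(K))$. Disjointness from $\hat{Z}^n$ (resp. $\hat{Z}$) is then equivalent to $\mathsf{C}_k(K)$ being disjoint from the region between $\partial\D$ and $\hat{\mathsf{Y}}^n$ (resp. $\hat{\mathsf{Y}}$). That region is a union of ``fjord'' domains cut off by the dams $\mathsf{b}_I$ of levels $m\ge n$ (resp. $m\ge -1$), where the level $m$ dams occur only when $a_{m+2}\ge \Mbold$. Both a hyperbolic cone $\mathsf{C}_k(K)$ and a dam are hyperbolically controlled objects near $\partial\D$. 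The task therefore reduces to showing that the landing point $\crit_{-k}$ is combinatorially far, relative to the scale of the dam, from the feet of every dam whose fjord could contain $\crit_{-k}$. By Koebe/real a priori bounds (Theorem \ref{thm:R-APB}), such a gap then turns into hyperbolic separation.

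First fix $K$, for instance the constant from Theorem \ref{thm:size-external-pseudo-bubbles}, so that the cones we must protect are determined before $\Mbold$ is chosen. Next, locate $\crit_{-k}$ combinatorially. For $q_n\le k<q_{n+1}$, the point $\crit_{-k}$ is not a critical point of $\fext^{q_{n+1}}$, but it is one of $\fext^{q_{n+2}}$. Its combinatorial distance to the nearest point of $\text{CP}(\fext^{q_{n+1}})$ is at least about $l_{n+1}$, and this distance is attained within the tile of $\mathcal{D}_n$ containing it.

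Now consider a dam of level $m\ge n$ in a tile $I\in\mathcal{D}_m$. Its feet lie at combinatorial distance $\approx \Mbold' l_{m+1}$ from the endpoints of $I$, and the fjord under it is the middle part of $I$, away from those endpoints. We need $\crit_{-k}$ to lie near an endpoint of $I$ rather than under the dam. For $m=n$ this holds because $\crit_{-k}$ lies within $\approx l_{n+1}$ of an endpoint of its tile in $\mathcal{D}_n$, or sits in the short part. For $m>n$, $\crit_{-k}$ is itself an endpoint of a tile of $\mathcal{D}_m$, since $k<q_{n+1}\le q_{m+1}$, so it is outside every fjord of level $m$. The case $m=n$ with $\crit_{-k}$ at distance about $l_{n+1}$ from an endpoint, and with $a_{n+2}\ge\Mbold$, is the delicate one: the dam foot is at distance $\Mbold' l_{n+1}\gg l_{n+1}$ from that endpoint, so the gap between $\crit_{-k}$ and the dam foot is $\asymp \Mbold' l_{n+1}$. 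Translated to Euclidean lengths via real a priori bounds and the logarithmic rule (Proposition \ref{prop:log-rule}), the hyperbolic distance between $\mathsf{C}_k(K)$ and the dam grows with $\Mbold'$. This is where $\Mbold$ is chosen large, depending on $K$. The cone also has to avoid larger dams of higher level covering it, which the nested construction of $\hat{\mathsf{Y}}^m$ handles.

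For part (2), $k=q_n$ and all levels $m\ge -1$ must be treated. The point $\crit_{-q_n}$ is at combinatorial distance $l_n$ from $\crit_0$, which is an endpoint of every tile in every $\mathcal{D}_m$. So for levels $m<n$, $\crit_{-q_n}$ lies within distance $l_n\le l_{m+1}\ll\Mbold' l_{m+1}$ of the endpoint $\crit_0$ of its tile. It is thus outside that fjord, with a gap growing with $\Mbold'$, and the same separation estimate applies. Levels $m\ge n$ are covered by part (1).

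The main obstacle is making the last conversion uniform. The separation is a combinatorial gap of order $\Mbold'$ tiles of level $m+1$. For tiles of bounded combinatorics, real bounds give Euclidean comparability. For the long string of $a_{m+2}$ tiles, however, one needs that $\Mbold'$ consecutive tiles adjacent to an endpoint have a definite Euclidean size compared to the distance from $\crit_{-k}$ to that endpoint. The plan is to use the parabolic-fjord geometry from \cite{DL22}, namely the collar estimates and the $\log$-rule, to show that the hyperbolic distance from $\Gamma_k$ to $\mathsf{b}_I$ is $\succeq\log\Mbold'$ minus a universal constant. This exceeds $K$ once $\Mbold$ is large. Since enlarging $\Mbold$ only shrinks $\hat{Z}$ (Remark \ref{rem:combinatorial-threshold}) while $K$ stays fixed, this choice does not interfere with the rest of the argument.
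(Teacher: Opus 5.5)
This is essentially the paper's argument. You work in external coordinates and treat dams of level $m\ge n$ using that $\crit_{-k}$ is an endpoint of a tile of $\mathcal{D}_m$. For $k=q_n$ you treat dams of level $m<n$ using that $\crit_{-q_n}$ lies in a $\mathcal{D}_m$-tile adjacent to $\crit_0$. You then conclude from the flatness of near-parabolic dams for large $\Mbold$, which the paper states as $\mathrm{diam}(\mathsf{b}_I)<\boldsymbol{\epsilon}\,\mathrm{dist}(\mathsf{b}_I,\partial I)$ and you state as $\log\Mbold'$ separation. Fixing $K$ before $\Mbold$ and invoking $\hat{Z}(\Mbold+1)\subset\hat{Z}(\Mbold)$ is a sensible ordering of constants.

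One slip: since $k\le q_{n+1}-1$, the point $\crit_{-k}$ \emph{is} a critical point of $\fext^{q_{n+1}}$, hence already an endpoint of a tile of $\mathcal{D}_n$. So $m=n$ is not a delicate case, and your $m>n$ reasoning covers it verbatim. Also, as your last paragraph recognizes, the separation comes from the Euclidean smallness of the dam's span under near-parabolic geometry. It does not come from the combinatorial gap $\Mbold' l_{m+1}$, which is combinatorially much shorter than the span itself.
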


\begin{proof}
    The proof below is done in external coordinates for convenience.
    Recall the nest of tilings $\mathcal{D}_m$ of $\partial \D$ described in \S\ref{ss:real-bounds}.
    Fix a sufficiently small constant $\boldsymbol{\epsilon} > 0$. 
    We can select $\Mbold$ to be sufficiently high enough (depending on $\boldsymbol{\epsilon}$) such that in the dynamical plane of $\fext$, whenever $a_{m+2}>\Mbold$ for some $m \geq -1$, the dam $\beta_I$ attached to any tile $I$ in $\mathcal{D}_m$ satisfies
    \[
        \text{diam}(\beta_I) < \varepsilon \cdot \text{dist}(\beta_I, \partial I).
    \]
    This is possible by virtue of Theorem \ref{thm:R-APB} and the fact that dams are hyperbolic geodesics.
    
    Let us fix integers $n\geq 0$, $k \in \{q_n, \ldots,q_{n+1}-1\}$, and $m \geq -1$ such that $a_{m+2} > \Mbold$. 
    There are two cases.
    \begin{itemize}
        \item Suppose $m \geq n$. 
        Then, the pre-critical point $\crit_{-k}$ is an endpoint of some tile in $\mathcal{D}_m$. 
        Hence, for any point $x$ in any dam of level $m$, the distance between $x$ and $\partial \D$ is less than $\boldsymbol{\epsilon} |x-\crit_{-k}|$.
        \item Suppose $m < n$. 
        The pre-critical point $\crit_{-q_n}$ is contained in either one of the two tiles $[\crit_{-q_m},\crit_0]$ or $[\crit_0,\crit_{-q_{m+1}+q_{m}}]$ of $\mathcal{D}_m$. 
        Since $n > m$, Theorem \ref{thm:R-APB} implies that the distance between $\crit_0$ and $\crit_{-q_n}$ is very small compared to the distance between $\crit_0$ and any dam of level $m$. 
        More precisely, there is some small constant $\boldsymbol{\epsilon}'=\boldsymbol{\epsilon}'( \boldsymbol{\epsilon})>0$ such that for any point $x$ on any dam of level $m$, the distance between $x$ and $\partial \D$ is still less than $\boldsymbol{\epsilon}' |x-\crit_{-q_n}|$. 
    \end{itemize}
    The first observation implies that there exists some sufficiently small $K = K(\boldsymbol{\epsilon}) > 0$ such that dams of level $\geq n$ are disjoint from $\mathsf{C}_k(K)$.
    Together with the second observation, we also have that there exists some sufficiently small $K = K(\boldsymbol{\epsilon}') > 0$ such that dams of level $\geq n$ are disjoint from $\mathsf{C}_{q_n}(K)$.
    These imply the desired properties (1) and (2).
\end{proof}

Theorem \ref{thm:size-external-pseudo-bubbles} and Lemma \ref{lem:disjointness} (2) imply the following.
Consider the map $\psi_{\textnormal{qc}}$, a modification of $\psi$ coming from Corollary \ref{cor:new-psi}.

\begin{corollary}
\label{cor:uniform-qc-transfer}
    The uniformly quasiconformal map $\psi_{\textnormal{qc}}$ sends $\hat{B}^{q_n}$ onto $\hat{\Bub}^{q_n}$ for all $n \geq 0$.
\end{corollary}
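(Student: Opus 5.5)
The plan is to compare two bounded sets. One is $\psi_{\textnormal{qc}}(\hat B^{q_n})$. The other is $\hat{\Bub}^{q_n}$, which by definition is $\psi(\hat B^{q_n})$, provided $\psi$ is defined on $\hat B^{q_n}$. The map $\psi_{\textnormal{qc}}$ agrees with $\psi$ on $\RS\backslash \hat Z$ (Corollary \ref{cor:new-psi}). So it suffices to show that $\hat B^{q_n}$ meets $\hat Z$ at most in its root $c_{-q_n}$, i.e.\ that $\hat B^{q_n}\backslash\{c_{-q_n}\}\subset \RS\backslash \hat Z$. At the root itself both maps send $c_{-q_n}$ to $\crit_{-q_n}$ by continuity of the boundary extension.

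The containment comes from combining Theorem \ref{thm:size-external-pseudo-bubbles} with Lemma \ref{lem:disjointness}(2), and the difficulty is making this non-circular. First fix $\Mbold$ as in Lemma \ref{lem:disjointness}. Then there is $K_0$ such that the cone $C_{q_n}(K_0)=\psi^{-1}(\mathsf C_{q_n}(K_0))$ avoids $\hat Z$ apart from its vertex. Since the cones $\mathsf C_k(K)$ grow with $K$, the constant of Theorem \ref{thm:size-external-pseudo-bubbles} has to fit inside the disjointness constant, and not the other way around. This is the main obstacle. I would handle it as follows.

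Theorem \ref{thm:size-external-pseudo-bubbles} places $\hat{\Bub}^{q_n}$ in a cone of universal aperture $K_1$. The proof of Lemma \ref{lem:disjointness} shows that, for $\Mbold$ large, the dams of the relevant levels lie $\boldsymbol\epsilon$-close to $\partial\D$ relative to their distance to $\crit_{-q_n}$. Hence any fixed cone $\mathsf C_{q_n}(K_1)$ misses them once $\boldsymbol\epsilon=\boldsymbol\epsilon(K_1)$ is small enough. The one point to check is that $K_1$ does not deteriorate with $\Mbold$. I would argue this by noting that the bubble part comes from Theorem \ref{thm:curve-families} and Lemma \ref{lem:control-near-roots}, which are independent of $\Mbold$, while the collars of dams only improve as $\Mbold$ grows. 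With this choice, every point of $\psi(\hat B^{q_n})$ other than $\crit_{-q_n}$ lies in $\mathsf C_{q_n}(K_1)$ and off $\hat{\mathsf Y}$, and on the same side as $\infty$.

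It remains to pull this back to the original plane, arguing by connectedness. The lift $\hat B^{q_n}$ is the component of $f^{-q_n}(\hat B^0)$ attached at $c_{-q_n}$. Near the root, its intersection with $\C\backslash\overline Z$ is a connected set. On that set $\psi$ is defined, and its image is the corresponding component of $\fext^{-q_n}(\hat{\Bub}^0)$, namely $\hat{\Bub}^{q_n}$. Because $\hat{\Bub}^{q_n}\backslash\{\crit_{-q_n}\}$ avoids $\psi(\hat Z\backslash \overline Z)$, the connected set $\hat B^{q_n}\backslash\{c_{-q_n}\}$ cannot enter $\hat Z$. Entering $\hat Z$ would require crossing $\partial\hat Z$ at a point $\ne c_{-q_n}$, whose $\psi$-image lies on $\hat{\mathsf Y}$, and that is excluded. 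Therefore $\hat B^{q_n}\backslash\{c_{-q_n}\}\subset\RS\backslash\hat Z$, so $\psi_{\textnormal{qc}}=\psi$ there, and $\psi_{\textnormal{qc}}(\hat B^{q_n})=\hat{\Bub}^{q_n}$.
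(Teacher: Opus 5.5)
Your route is the paper's: the corollary is stated there as an immediate consequence of Theorem~\ref{thm:size-external-pseudo-bubbles} and Lemma~\ref{lem:disjointness}(2). That is, $\hat B^{q_n}$ meets $\hat Z$ only at its root, and on that set $\psi_{\textnormal{qc}}$ agrees with $\psi$. You are also right that the order of constants matters: the disjointness must hold for the cone aperture produced by the theorem.

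Your way of breaking the circularity fails, however. The claim that ``the collars of dams only improve as $\Mbold$ grows'' is the opposite of what the paper says. Remark~\ref{rem:combinatorial-threshold} states that $\boldsymbol{\delta}$, $K'$ and $K$ all get \emph{worse} as $\Mbold$ increases. The proof of Theorem~\ref{thm:size-external-pseudo-bubbles} bounds the hyperbolic size of the dams on $\partial\hat{\Bub}^k$ exactly through that collar modulus $\boldsymbol{\delta}(\Mbold)$. Similarly, Lemma~\ref{lem:control-near-roots} is a statement about pseudo-bubbles and inherits the $\Mbold$-dependent constant of Lemma~\ref{lem:main-bubble}; only its bubble version is free of $\Mbold$. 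So raising $\Mbold$ to flatten the dams of $\hat{\mathsf Y}$ may at the same time widen the cone those dams must avoid. As written, your argument does not close.

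The missing ingredient is monotonicity in the threshold, also recorded in Remark~\ref{rem:combinatorial-threshold}: $\hat Z(\Mbold+1)\subset\hat Z(\Mbold)$. Each pseudo-bubble is the component of the preimage of a set that shrinks with $\Mbold$, and it is chosen to contain the same $\Mbold$-independent bubble. Hence $\hat B^{q_n}(\Mbold)\subset\hat B^{q_n}(\Mbold_0)$ for $\Mbold\ge\Mbold_0$. The correct order of choices is then:
\begin{enumerate}
\item fix a reference threshold $\Mbold_0$;
\item let $K_1=K_1(\Mbold_0)$ be the cone constant of Theorem~\ref{thm:size-external-pseudo-bubbles}, so that $\hat{\Bub}^{q_n}(\Mbold)\subset\mathsf C_{q_n}(K_1)$ for every $\Mbold\ge\Mbold_0$;
\item only then use the proof of Lemma~\ref{lem:disjointness} to pick $\Mbold\ge\Mbold_0$ with $\boldsymbol{\epsilon}=\boldsymbol{\epsilon}(K_1)$.
\end{enumerate}

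With this substitution the rest of your proposal works. Your final connectedness step is not even needed. Since $f$ is injective on $\overline Z$ and $\hat B^0\cap\overline Z=\{c_0\}$, you already have $\hat B^{q_n}\cap\overline Z=\{c_{-q_n}\}$, so $\hat{\Bub}^{q_n}=\psi(\hat B^{q_n})$. The punctured cone is connected, unbounded and disjoint from $\hat{\mathsf Y}$. It therefore lies in the unbounded component of $\C\backslash\hat{\mathsf Y}$, which places $\hat{\Bub}^{q_n}\backslash\{\crit_{-q_n}\}$ outside $\psi(\hat Z\backslash\overline Z)$.
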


Now, we are ready to prove parts (2) and (3) of Theorem \ref{main-thm-01}.

\begin{theorem}
\label{thm:pseudo-bubble-bounds}
    There exists a universal $K>1$ such that 
    \begin{enumerate}[label=\textnormal{(\arabic*)}]
        \item every primary pseudo-bubble $\hat{B}^{k}$ of $f$ is contained in the cone $C_k(K)$ and satisfies
        \[
            \frac{1}{K} |c_{-k} - c_{-k+q_{n}}| \leq \textnormal{diam}(\hat{B}^{k}) \leq K |c_{-k} - c_{-k+q_{n}}|,
        \]
        where $n \in \N$ is the unique integer such that $q_n \leq k < q_{n+1}$;
        \item with respect to the hyperbolic metric of $\C \backslash \overline{Z}$, the diameter of any non-primary pseudo-bubble $\hat{B}$ of $f$ is at most $K$.
    \end{enumerate}
\end{theorem}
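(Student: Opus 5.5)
Part (1) will be obtained by transferring Theorem \ref{thm:size-external-pseudo-bubbles} from the $\fext$-plane back to the $f$-plane. I will first handle $k=q_n$. Corollary \ref{cor:uniform-qc-transfer} gives that $\psi_{\textnormal{qc}}$ maps $\hat B^{q_n}$ onto $\hat\Bub^{q_n}$. By Theorem \ref{thm:size-external-pseudo-bubbles}, $\hat\Bub^{q_n}\subset\mathsf C_{q_n}(K)$ and $\operatorname{diam}\hat\Bub^{q_n}\asymp|\crit_{-q_n}-\crit_0|$. Since $\psi_{\textnormal{qc}}$ is uniformly quasiconformal and hence quasisymmetric, the cone inclusion pulls back to $\hat B^{q_n}\subset C_{q_n}(K')$. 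Indeed, on the relevant region $\psi_{\textnormal{qc}}=\psi$ by Lemma \ref{lem:disjointness}(2), and $\psi$ is an isometry of hyperbolic metrics. Quasisymmetry also turns the diameter comparison into $\operatorname{diam}\hat B^{q_n}\asymp|c_{-q_n}-c_0|$; here I use that $\hat\Bub^{q_n}$ and the arc $[\crit_{-q_n},\crit_0]$ sit at comparable scale from the common base point $\crit_{-q_n}$.

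For general $q_n\le k<q_{n+1}$ I will avoid relying on $\psi_{\textnormal{qc}}$ matching $\hat B^k$ with $\hat\Bub^k$, since $\hat B^k$ could a priori meet $\hat Z\setminus\hat Z^n$. Instead I will use a replacement $\psi^{(n)}_{\textnormal{qc}}$: the qc extension of $\psi|_{\RS\setminus\hat Z^n}$, which is uniformly qc by the same argument as Corollary \ref{cor:new-psi}, because $\partial\hat Z^n$ and $\hat{\mathsf Y}^n$ are uniform quasicircles by Theorem \ref{thm:pseudo-siegel-bounds} and Lemma \ref{lem:ext-pseudo-siegel}. Lemma \ref{lem:disjointness}(1) shows that $\mathsf C_k(K)$ avoids $\psi(\hat Z^n)$, so $\hat B^k=\psi^{-1}(\hat\Bub^k)$ lies in $C_k(K)$. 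The diameter estimate then follows from quasisymmetry of $\psi^{(n)}_{\textnormal{qc}}$ applied to $\crit_{-k},\crit_{-k+q_n}$ and a far point of $\hat\Bub^k$.

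For part (2), I will use the fact that a non-primary pseudo-bubble $\hat B$ of generation $g$ is a component of $f^{-j}(\hat B')$, where $\hat B'$ is the first primary pseudo-bubble in its forward orbit and $f^j$ maps a neighbourhood of $\hat B$ into $\C\setminus\overline Z$. In the external plane, $\hat\Bub'=\hat\Bub^k$ has bounded hyperbolic diameter in $\C\setminus\overline\D$, since it sits inside the cone $\mathsf C_k(K)$ at scale comparable to its distance from $\crit_{-k}$. I then take a univalent branch of $\fext^{-j}$ from $\C\setminus(\overline\D\cup\text{critical values})$. The main obstacle is that $\fext^j$ is a branched covering of $\C\setminus\overline\D$ with critical values at preimages of $\infty$ and at bubbles, not a covering, so Schwarz–Pick does not apply to it directly. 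I would try to avoid this by using $\fext:\C\setminus(\overline\D\cup\Bub^0)\to\C\setminus\overline\D$, which is an unbranched covering except at $\infty$. Inverse branches are then hyperbolic contractions from $\C\setminus\overline\D$ into a subdomain, and this also holds for the deleted-puncture version; each pullback step is non-expanding for the hyperbolic metric of $\C\setminus\overline\D$. To make this legitimate I must ensure that $\hat\Bub'$ and its intermediate preimages lie in a simply connected subset of $\C\setminus\overline\D$ minus the puncture. This holds because bounded hyperbolic diameter near the circle keeps them away from $\infty$. Composing $j$ contractions then bounds the hyperbolic diameter of $\hat\Bub$ by that of $\hat\Bub'$, and pulling back by $\psi$, a hyperbolic isometry off $\overline Z$, gives the claim in the $f$-plane.
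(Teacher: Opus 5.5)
Your treatment of $k=q_n$ in part (1) matches the paper: Theorem~\ref{thm:size-external-pseudo-bubbles}, Corollary~\ref{cor:uniform-qc-transfer}, and quasisymmetry of $\psi_{\textnormal{qc}}$. The cone inclusion is in fact tautological for every $k$, since $C_k(K)=\psi^{-1}(\mathsf C_k(K))$ and $\hat B^k=\psi^{-1}(\hat\Bub^k)$.

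The first genuine gap is the diameter estimate for $q_n<k<q_{n+1}$: your map $\psi^{(n)}_{\textnormal{qc}}$ is not uniformly quasiconformal. Theorem~\ref{thm:pseudo-siegel-bounds} only says that $\hat Z=\hat Z^{-1}$ is a $K$-quasidisk. The disk $\hat Z^n$ with $n\ge 0$ leaves the parabolic fjords of all levels $m<n$ unfilled. For example, take $\theta=[0;N,1,1,\dots]$ with $N\ge\Mbold$. Then $\hat Z^n=\overline Z$ for every $n\ge 0$, and $\overline Z$ is not a uniform quasidisk as $N\to\infty$: it develops exactly the level $-1$ fjord that the dam cuts off. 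Since $\partial\hat Z^n=(\psi^{(n)}_{\textnormal{qc}})^{-1}(\hat{\mathsf Y}^n)$, no uniformly qc extension can exist. Lemma~\ref{lem:ext-pseudo-siegel} gives uniformity only in external coordinates, which does not help in the $f$-plane.

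The paper avoids any qc transfer for these $k$. From the case $k=q_n$ it builds a disk $E^{q_n}$ around $\hat B^{q_n}\cup J_n$, where $J_n\subset\partial\hat Z$ is the arc from $c_{-q_n}$ to $c_0$. This disk is chosen so that $E^{q_n}\setminus(\hat B^{q_n}\cup J_n)$ has definite modulus and $E^{q_n}$ contains no critical value of $f^{k-q_n}$. The branch of $f^{-(k-q_n)}$ sending $\hat B^{q_n}$ to $\hat B^k$ is then univalent on $E^{q_n}$. Koebe distortion on $\hat B^{q_n}\cup J_n$ gives $\textnormal{diam}(\hat B^k)\asymp|c_{-k}-c_{-k+q_n}|$.

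The second gap is in part (2), whose starting claim is false. A primary pseudo-bubble $\hat\Bub^k$ contains its root $\crit_{-k}\in\partial\D$, so its hyperbolic diameter in $\C\setminus\overline\D$ is infinite. This is why size control is asserted only for non-primary bubbles. Likewise, $f^j$ does not map $\hat B$, let alone a neighbourhood of it, into $\C\setminus\overline Z$, since the root of $\hat B$ goes to $c_{-k}\in\partial Z$. Composing hyperbolic contractions starting from a set of infinite diameter bounds nothing.

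The quantitative input is needed at the first pullback, from $\hat B^k$ to the non-primary pseudo-bubble $f^{j-1}(\hat B)=\hat B^{k,0}$ attached to $\hat B^0$. The paper takes it from part (1). There is a disk $U^k\supset\hat B^k$ with $\textnormal{mod}(U^k\setminus\hat B^k)\succ 1$, meeting $\partial\hat Z^n$ only in a short arc around $c_{-k}$ and avoiding the critical value $c_1$. It lifts univalently under $f$ to a neighbourhood $U^{k,0}$ of $\hat B^{k,0}$ that is disjoint from $\overline Z$ and still carries a definite collar. So $\hat B^{k,0}$ has bounded hyperbolic diameter in $\C\setminus\overline Z$. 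Only after this step does your Schwarz--Pick argument handle the remaining pullbacks.
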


\begin{proof}
    Throughout, $K$ will represent a sufficiently large universal positive constant.  
    
    Item (1) for $k=q_n$ follows directly from Theorem \ref{thm:size-external-pseudo-bubbles} and Corollary \ref{cor:uniform-qc-transfer}.
    As a result, we can construct an open disk neighborhood $E^{q_n}$ of $\hat{B}^{q_n}$ such that 
    \begin{itemize}
        \item[(i)] the intersection of $E^{q_n}$ and the boundary of $\hat{Z}^{n}$ is the interval with endpoints $c_{-q_n+q_{n+1}}$ and $c_{-q_{n+1}}$;
        \item[(ii)] if we denote by $J_n$ the closed interval on $\partial \hat{Z}$ with endpoints $c_{-q_n}$ and $c_{0}$, then $E^{q_n}\backslash (\hat{B}^{q_n} \cup J_n)$ is an annulus with modulus $\geq K^{-1}$.
    \end{itemize}
    Fix an integer $k$ with $q_n < k < q_{n+1}$. 
    Since $E^{q_n}$ contains no critical values of $f^{q_{n+1}-q_n}$, the inverse branch $g$ of $f^{k-q_n}$ that sends $\hat{B}^{q_n}$ to $\hat{B}^k$ is well-defined on $E^{q_n}$. 
    By property (ii), $g$ has uniformly bounded distortion on $\hat{B}^{q_n} \cup J_n$. 
    This implies item (1) for all $k$.

    Next, similar to the disks $E^{q_n}$ described above, for every $k,n \geq 0$ with $q_n \leq k < q_{n+1}$, item (1) guarantees the existence of a disk neighborhood $U^k$ of $\hat{B}^k$ such that 
    \begin{itemize}
        \item[(iii)] the intersection of $U^{k}$ and $\partial \hat{Z}^{n}$ is the interval with endpoints $c_{-k+q_{n+1}}$ and $c_{-k+q_n-q_{n+1}}$;
        \item[(iv)] $\textnormal{mod}(U^k\backslash\hat{B}^{k}) \geq K^{-1}$.
    \end{itemize}
    Denote by $\hat{B}^{k,0}$ the lift of $\hat{B}^k$ under $f$ that is attached to $\hat{B}^0$.
    Since $U^k$ does not contain the critical value of $f$, then $U^k$ univalently lifts to a disk neighborhood $U^{k,0}$ of $\hat{B}^{k,0}$. 
    Property (iii) implies that $U^{k,0}$ is disjoint from $\overline{Z}$ and property (iv) implies that $U^{k,0} \backslash \hat{B}^{k,0}$ has definite modulus. 
    Hence, the hyperbolic diameter of $\hat{B}^{k,0}$ is bounded above by some universal constant independent of $k$. 
    By further pulling back $\hat{B}^{k,0}$ under $f$ and applying Schwarz Lemma with respect to the hyperbolic metric of the complement of $\overline{Z}$, item (2) follows.
\end{proof}

\subsection{Uniform quasiconformality}
\label{ss:uniform-qc}

Recall from \S\ref{ss:pseudo-siegel} the existence of a nest of tilings $\mathcal{T}_m(\hat{Z})$, $m\geq -1$ on the boundary of the pseudo-Siegel disk $\hat{Z}$. Given any pseudo-bubble $\hat{B}$ of $f$ of some generation $g$, we can lift $\mathcal{T}_m\left(\hat{Z}\right)$, $m\geq -1$ to a nest of tilings $\mathcal{T}_m\left(\hat{B}\right)$, $m\geq -1$ on the boundary of $\hat{B}$ via an appropriate inverse branch of $f^{g+1}$. 
Similarly, given any pseudo-bubble $\hat{\Bub}$ in external coordinates, we can define the nest of tilings $\mathcal{T}_m\left(\hat{\Bub}\right)$, $m \geq -1$ on the boundary of $\hat{\Bub}$ by transferring the nest of tilings of $\psi^{-1}\left(\hat{\Bub}\right)$ via $\psi$. 
Trivially, $\mathcal{T}_m\left(\hat{\Bub}\right)$ always has uniformly bounded combinatorics and inner geometry. However, it is unclear whether or not $\mathcal{T}_m\left(\hat{\Bub}\right)$ has uniformly bounded outer geometry.

First, let us consider a principal pseudo-bubble $\hat{\Bub}^{q_n}$ of $\fext$ for some $n \geq 3$.
We will define a new nest of tilings 
\[
\widetilde{\mathcal{T}}_k = \widetilde{\mathcal{T}}_k \left(\hat{\Bub}^{q_n} \right), \quad k \geq 0
\]
by modifying $\mathcal{T}_k = \mathcal{T}_k \left(\hat{\Bub}^{q_n}\right)$ appropriately.
We will show in the next lemma that this new nest of tilings has uniformly bounded combinatorics and inner and outer geometry.

For $m \in \{3, \ldots, n\}$, let
\begin{itemize}
    \item $X^*_m$ be the tile in $\mathcal{T}_m$ that contains the root $\crit_{-q_n}$, 
    \item $X^l_m$ and $X^r_m$ be the two distinct tiles in $\mathcal{T}_m$ that are adjacent to $X^*_m$, 
    \item $\mathbf{X}_m $ be the union $X_m^l \cup X_m^* \cup X_m^r$, and
    \item $J^{n-m}$ be the closure of $\partial \hat{\Bub}^{q_n} \backslash \mathbf{X}_m$.
\end{itemize}
Note that 
\[
    \mathbf{X}_3 \Supset \mathbf{X}_4 \Supset \ldots \Supset \mathbf{X}_n 
    \quad \text{ and } \quad
    J^{n-3} \Subset J^{n-4} \Subset \ldots \Subset J^0.
\]

We set the tiling $\widetilde{\mathcal{T}}_{0}$ to consist of $J^0$, which we call the unique \emph{extraordinary} tile, and all the tiles in $\mathcal{T}_{n+1}$ that are contained in $\mathbf{X}_n$, which we call \emph{ordinary} tiles.
Next, we inductively define the tiling $\widetilde{\mathcal{T}}_{k}$ for $k \geq 1$ by the following set of rules. 
\begin{enumerate}[label=\textnormal{(\roman*)}]
    \item Every ordinary tile $I$ in $\widetilde{\mathcal{T}}_{k-1}$ decomposes into ordinary tiles in $\widetilde{\mathcal{T}}_k$ as follows.
    If $I$ belongs to $\mathcal{T}_m$ for some $m$, then every tile in $\mathcal{T}_{m+1}$ that is contained in $I$ belongs in $\widetilde{\mathcal{T}}_{k}$.
    \item For $k \leq n-3$, $\widetilde{\mathcal{T}}_k$ has a unique extraordinary tile, which is $J^k$.
    \item For $k \leq n-3$, every tile of $\mathcal{T}_{n+1-k}$ that is contained in $J^{k-1} \backslash J^k$ is an ordinary tile in $\widetilde{\mathcal{T}}_k$.
    \item Every tile of $\mathcal{T}_{4}$ that is contained in $J^{n-3}$ is an ordinary tile in $\widetilde{\mathcal{T}}_{n-2}$. 
\end{enumerate} 
Observe that for $k \geq n-2$, every tile of $\widetilde{\mathcal{T}}_k$ is deemed ordinary.

\begin{lemma}
\label{lem:uniform-tiling}
    For $n \geq 3$, the modified nest of tilings $\widetilde{\mathcal{T}}_k(\hat{\Bub}^{q_n})$, $k\geq 0$ has uniformly bounded combinatorics and inner and outer geometry.
\end{lemma}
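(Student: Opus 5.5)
The plan is to verify the three properties separately: combinatorics, inner geometry, and outer geometry. The first two should be essentially formal consequences of the corresponding properties of the lifted nest $\mathcal{T}_m(\hat{\Bub}^{q_n})$. The outer geometry is where Theorem \ref{thm:size-external-pseudo-bubbles} and Theorem \ref{thm:curve-families} enter.

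\emph{Combinatorics.} Ordinary tiles refine according to rule (i). Each such tile is a union of at most $M$ tiles of the next level, since $\mathcal{T}_m$ has bounded combinatorics. The extraordinary tile $J^{k-1}$ splits into $J^k$ together with the tiles of $\mathcal{T}_{n+1-k}$ contained in $J^{k-1}\setminus J^k$. This difference equals $\mathbf{X}_{n-k}\setminus \mathbf{X}_{n+1-k}$. It lies in three tiles of $\mathcal{T}_{n-k}$, so it consists of at most $3M^2$ tiles of $\mathcal{T}_{n+1-k}$. The last step, rule (iv), is handled the same way using $\mathbf{X}_3$. Note also that $J^0$ is a single tile in $\widetilde{\mathcal{T}}_0$.

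\emph{Inner geometry.} For an ordinary tile $I$ of level $m$, its neighbors in $\widetilde{\mathcal{T}}_k$ are unions of a bounded number of tiles of levels $m$ and $m\pm 1$. The only exception is when a neighbor is the extraordinary tile, which is larger. In every case the family $\mathcal{F}^-_3(I)$ for $\widetilde{\mathcal{T}}_k$ is contained in a bounded union of families of the form $\mathcal{F}^-_3(I')$ for $\mathcal{T}_{m'}$. We then bound these by $K'$. This uses the fact that inner geometry is uniform, because it is a conformal lift of the geometry of $\hat{Z}$. For the extraordinary tile $J^k$, the complement of $J^k$ together with its two neighbors is inside $\mathbf{X}_{n-k}$, and this region is separated from $J^k$ by a bounded number of tiles. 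So we get a bound via the inner bound for $X^*_{n-k}$ at level $n-k$.

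\emph{Outer geometry.} This is the main obstacle. The idea is to split the curves in $\C\setminus(\overline{\D}\cup\hat{\Bub}^{q_n})$ into two types. Short curves stay in a neighborhood of $\hat{\Bub}^{q_n}$ away from $\partial\D$. Long curves reach $\partial\D$ or go around it.

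For short curves near a tile deep in $\mathbf{X}_m$, we use the inverse branch of $f^{q_n+1}$ that maps $\hat{\Bub}^0$, and then $\hat{\Bub}^{q_n}$, univalently. This branch has bounded distortion on a neighborhood $U^{q_n}$ of definite modulus, by properties (iii)--(iv) from the proof of Theorem \ref{thm:pseudo-bubble-bounds}. Such curves therefore inherit the outer bounds of $\mathcal{T}_m(\hat{\Bub}^0)$. Those bounds hold by Lemma \ref{lem:main-bubble}, since $\hat{\Bub}^0$ is a uniform quasidisk within a cone of bounded hyperbolic size.

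The dangerous curves are those from small tiles near the root $\crit_{-q_n}$ that escape to $\partial\D$ far away. Here the modification is needed. Near the root, ordinary tiles of level $n+1-k$ sit at the combinatorial scale of $\mathcal{D}$-tiles. By Lemma \ref{lem:control-near-roots} and the cone control of Theorem \ref{thm:size-external-pseudo-bubbles}, the bubble is attached at a definite angle. Hence the width of curves from such a tile to $\partial\D$ beyond neighboring tiles is bounded by Theorem \ref{thm:R-APB} and Proposition \ref{prop:log-rule}. Curves from a tile to far parts of $\partial\D$ are subfamilies of $\mathcal{F}_{q_n}$ restricted to that tile, so Theorem \ref{thm:curve-families} bounds them. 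Curves landing on the extraordinary tile $J^k$ are bounded because $J^k$ is a macroscopic part of $\partial\hat{\Bub}^{q_n}$, and they must pass through a definite annulus. Making the matching between tile scales near the root and the scales $l_{n+j}$ on $\partial\D$ precise is the delicate point. We would try first to do this by pulling back via the Koebe estimate of Lemma \ref{lem:control-near-roots}, and by combining the resulting bounds with the Series and Parallel Laws.
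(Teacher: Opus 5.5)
Your combinatorics and inner-geometry arguments are fine and essentially match the paper, which pushes the inner families forward univalently by $\fext^{q_n}$ onto $\hat{\Bub}^0$. The only slip is that adjacent ordinary tiles across $\partial\mathbf{X}_m$ can differ in level by $2$, not $1$, which is harmless.

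\textbf{Outer geometry: the bounded-distortion claim is false.} This is where the genuine gap is. Your ``short curves'' step asserts that the branch of $\fext^{-q_n}$ taking $\hat{\Bub}^0$ to $\hat{\Bub}^{q_n}$ has bounded distortion on a neighborhood of definite modulus. But $\crit_{q_n}$ is one of the singular values $\crit_1,\dots,\crit_{q_n}$ of $\fext^{q_n}$. It lies at distance $|\crit_0-\crit_{q_n}|\to 0$ from the root of $\hat{\Bub}^0$, whose diameter is $\asymp 1$. So no definite-modulus neighborhood of $\hat{\Bub}^0$ carries a univalent branch. The sets $U^{q_n}$, $E^{q_n}$ you cite are neighborhoods of $\hat{B}^{q_n}$, used to pull back to $\hat{B}^k$ or to lift by one iterate; they say nothing about this branch.

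Koebe control exists only at the scale of the three level-$n$ tiles around the root. This is the paper's Case 1, which uses a $\partial\D$-symmetric annulus $\mathbf{A}$ around $\fext^{q_n}(X^*_n)$ avoiding $\crit_1,\dots,\crit_{q_n}$. If your claim were true, the unmodified lifted nest would already have bounded outer geometry, and $\widetilde{\mathcal{T}}_k$ would be unnecessary.

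For tiles away from $X^*_n$ and curves avoiding $\partial\D$, the missing ingredient is Lemma~\ref{lem:univalent-pushforward}. Restrict the lamination to $\hat{\Uext}^{q_n}_0$ and push it forward by the covering $\fext^{q_n}\colon \hat{\Uext}^{q_n}_0\to\hat{\Uext}_0$ at a cost of $O(1)$. The result is a family of curves from $\fext^{q_n}(I_0)$ protecting a neighboring tile of $\mathcal{T}_{m+1}(\hat{\Bub}^0)$. Without this lemma, the transformation rule loses the full degree $2^{q_n}$.

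\textbf{Curves meeting $\partial\D$ and the extraordinary tile.} Your plan does not work as stated here either.
\begin{itemize}
\item Theorem~\ref{thm:curve-families} concerns curves in $\Uext_{q_n}$ starting on the bubble $\Bub^{q_n}$. The outer families instead start on $\partial\hat{\Bub}^{q_n}$ (possibly on dams), live in $\RS\setminus\hat{\Bub}^{q_n}$, and may pass through $\D$. Extending them to reach $\Bub^{q_n}$ only decreases width, which is the wrong direction.
\item Since $\partial\D$ is not part of the boundary of the outer domain, sorting curves by where they ``land on $\partial\D$'' misses curves that dip into $\D$ near the root and re-emerge next to $J$. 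For the tile containing the root, the family of curves to $\partial\D$ even has infinite width. That is why the paper handles the tiles in $X^*_n$ only through the symmetric annulus.
\item For $I_0$ disjoint from $X^*_n$, the paper bounds all $\partial\D$-crossing curves at once. The set $\partial\hat{\Bub}^{q_n}\setminus X^*_n$ has diameter and distance to $\partial\D$ both $\asymp|\crit_{-q_n}-\crit_0|$. This follows from Koebe control of the endpoints of $X^*_n$ together with Theorem~\ref{thm:size-external-pseudo-bubbles}.
\item The extraordinary tile is reduced to the two previous cases by writing $J$ as a bounded union of tiles of $\mathcal{T}_{n-k+1}$. Your unspecified ``definite annulus'' does not do this.
\end{itemize}

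Finally, your level bookkeeping near the root is reversed. In $\widetilde{\mathcal{T}}_k$ the tiles near the root have level $n+1+k$. The tiles of level $n+1-k$ lie in $\mathbf{X}_{n-k}\setminus\mathbf{X}_{n+1-k}$, away from the root.
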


\begin{proof}
    Fix $n$ and write $\mathcal{T}_m = \mathcal{T}_m \left(\hat{\Bub}^{q_n} \right)$ for $m \geq -1$ and $\widetilde{\mathcal{T}}_k = \widetilde{\mathcal{T}}_k \left(\hat{\Bub}^{q_n} \right)$ for $k \geq 0$. 
    It is immediate from the construction that the nest $\widetilde{\mathcal{T}}_k$, $k\geq 0$ has uniformly bounded combinatorics.

    Let us fix $k \geq 0$.
    Pick any triplet of consecutive tiles $I_{-1}, I_0, I_1$ in $\widetilde{\mathcal{T}}_k$. 
    Denote $J := \partial \hat{\Bub}^{q_n} \backslash (I_{-1} \cup I_0 \cup I_1)$. 
    Denote 
    \begin{itemize}
        \item $\widetilde{\mathcal{F}}^+(I_0) = $ the family of proper curves in $\RS \backslash \hat{\Bub}^{q_n}$ that start from $I_0$ and end at $J$;
        \item $\widetilde{\mathcal{F}}^-(I_0) = $ the family of proper curves in the interior of $\hat{\Bub}^{q_n}$ that start from $I_0$ and end at $J$.
    \end{itemize}
    Our goal is to show that the width $\widetilde{W}^\pm(I_0)$ of $\widetilde{\mathcal{F}}^\pm(I_0)$ is uniformly bounded. 
    
    To control the inner geometry, we remind ourselves that the interior of $\hat{\Bub}^{q_n}$ is univalently mapped onto $\hat{\Bub}^{0}$. This implies that $\widetilde{W}^-(I_0)$ is equal to the width of proper curves in the interior of $\hat{\Bub}^{0}$ that start from $\fext^{q_n}(I_0)$ and end at $\fext^{q_n}(J)$.
    There are three distinct cases to consider.
    \begin{itemize}
        \item If $I_{-1}$, $I_0$, and $I_1$ are all ordinary, there exists some $m$ such that $I_{-1}$, $I_0$, $I_1$ are contained in $\mathcal{T}_m  \cup \mathcal{T}_{m+1} \cup \mathcal{T}_{m+2}$. 
        \item If $I_0$ is extraordinary, then both $I_{-1}$ and $I_1$ must be in $\mathcal{T}_{n+1-k}$ and the number of distinct tiles of $\mathcal{T}_{n+1-k}$ in the closure of the complement of $I_{-1} \cup I_0 \cup I_1$ is bounded above by some uniform constant.
        \item If $I_{\pm 1}$ is extraordinary, then $I_{\pm 1}$ contains a tile in $\mathcal{T}_{n+1-k}$, $I_0$ is a tile in $\mathcal{T}_{n+1-k}$, $I_{\mp 1}$ is a tile in either $\mathcal{T}_{n+1-k}$ or $\mathcal{T}_{n+3-k}$.
     \end{itemize}
     These observations, together with the fact that $\mathcal{T}_m $, $m \geq -1$ has uniformly bounded inner geometry, imply that $\widetilde{W}^-(I_0)$ is uniformly bounded from above.
     Hence, $\widetilde{T}_k$ has uniformly bounded inner geometry.
     
     An observation similar to the previous paragraph implies that the nest of tilings $\fext^{q_n}(\widetilde{\mathcal{T}}_k)$, $k\geq 0$ on $\hat{\Bub}^{0}$ has uniformly bounded outer geometry.
     For brevity, we will proceed by denoting $X^l = X_n^l$, $X^* = X_n^*$, $X^r = X_n^r$.
     To analyze the outer geometry, there are again three cases to consider.

    \vspace{0.05in}

\begin{figure}
    \centering
\begin{tikzpicture}
    \node[anchor=south west,inner sep=0] (image) at (0,0) {\includegraphics[width=0.75\linewidth]{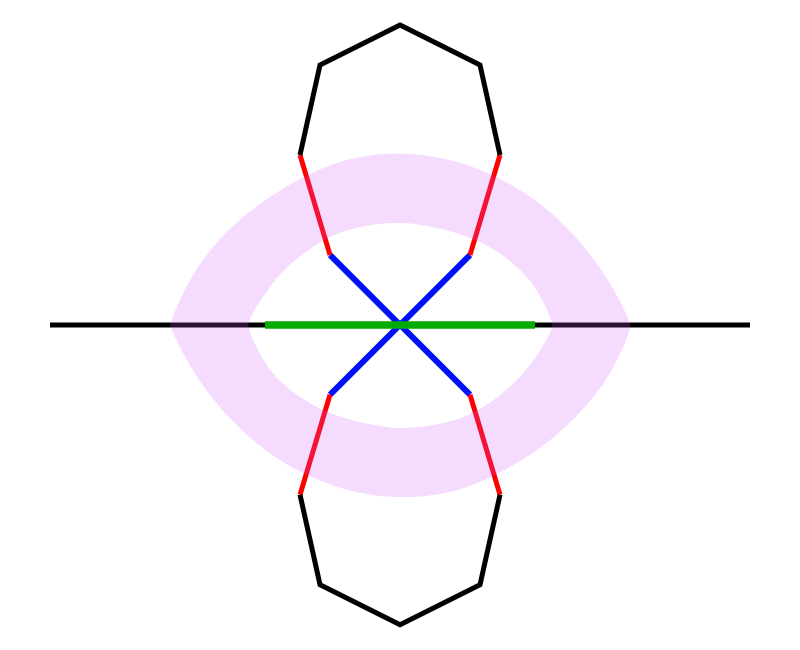}};
    \begin{scope}[
        x={(image.south east)},
        y={(image.north west)}
    ]
    
    \node [black] at (0.338,0.498) {$\bullet$};
    \node [black] at (0.36,0.45) {$\crit_{-q_n -q_{n+1}}$};
    \node [black] at (0.5,0.497) {\large $\bullet$};
    \node [black] at (0.51,0.45) {$\crit_{-q_n}$};
    \node [black] at (0.662,0.498) {$\bullet$};
    \node [black] at (0.675,0.45) {$\crit_{-q_n + q_{n+1}}$};
    \node [black] at (0.85,0.498) {\large $\bullet$};
    \node [black] at (0.85,0.45) {$\crit_{-0}$};
    \node [black] at (0.10,0.498) {\large $\bullet$};
    \node [black] at (0.10,0.45) {$\crit_{-q_n+q_{n-1}}$};
    \node [blue!80!black] at (0.5,0.57) {$X^*$};
    \node [red] at (0.34,0.75) {$X^l$};
    \node [red] at (0.66,0.75) {$X^r$};
    \node [purple] at (0.5,0.7) {$\mathbf{A}'$};
    \end{scope}
\end{tikzpicture}
    \caption{The annulus $\mathbf{A}'$ in Case 1}
    \label{fig:case-1-annulus}
\end{figure}
     
    \noindent \underline{Case 1:} $I_0$ is contained in $X$.
    \vspace{0.05in}

    By Koebe distortion theorem and Theorem \ref{thm:R-APB}, there exists an open $\partial \D$-symmetric annulus $\mathbf{A}$ with the following properties.
     \begin{enumerate}[label=\textnormal{(\alph*)}]
         \item $\mathbf{A} \cap \partial \hat{\Bub}^{0}$ is contained in the union of intervals $\fext^{q_n}(X^l)$ and $\fext^{q_n}(X^r)$.
         \item $\text{mod}(\mathbf{A}) \asymp 1$.
         \item the bounded connected component $\mathbf{D}$ of $\mathbf{A}$ contains $\fext^{q_n}(X^*)$ and the interval $[\crit_{-q_{n+1}}, \crit_{q_{n+1}}] \subset \partial \D$ together with all the dams attached to this interval.
         \item $\mathbf{A} \cup \mathbf{D}$ is disjoint from $\{\crit_i\}_{1\leq i \leq q_n}$.
     \end{enumerate}
    Property (d) implies that both $\mathbf{A}$ and $\mathbf{D}$ univalently lift under $\fext^{q_n}$ to $\partial \D$-symmetric open annulus $\mathbf{A}'$ and closed set $\mathbf{D}'$ respectively. 
    See Figure \ref{fig:case-1-annulus}.
    
    Properties (b) and (c) as well as the fact that $I_0$ is contained in $X^*$ imply that the width of curves in $\widetilde{\mathcal{F}}^+(I_0)$ that intersect the outer boundary of $\mathbf{A}'$ must be uniformly bounded above, 
    Meanwhile, the set of curves in $\widetilde{\mathcal{F}}^+(I_0)$ that stay inside of $\mathbf{A}' \cup \mathbf{D}'$ will be mapped univalently to a proper curve in $\RS \backslash \hat{\Bub}^0$ connecting $\fext^{q_n}(I_0)$ and $\fext^{q_n}(J)$. 
    Therefore, since $\fext^{q_n}(\widetilde{\mathcal{T}}_k)$, $k\geq 0$ has uniformly outer bounded geometry, then $\widetilde{\mathcal{W}}^+(I_0) = O(1)$.
    \vspace{0.05in}
     
    \noindent \underline{Case 2:} $I_0$ is an ordinary tile disjoint from $X$.
    \vspace{0.05in}

    Let us split $\widetilde{\mathcal{F}}^+(I_0)$ into a disjoint union $\mathcal{G}_1 \cup \mathcal{G}_2$ where $\mathcal{G}_1$ consists of curves that intersect $\partial \D$ and $\mathcal{G}_2$ consists of curves that avoid $\partial \D$. 
    Up to removal of $O(1)$ width, we will assume that both $\mathcal{G}_1$ and $\mathcal{G}_2$ are laminations.
    
    The width of $\mathcal{G}_1$ is bounded above by the width $W_1$ of curves connecting $\partial{\hat{\Bub}}^{q_n} \backslash X^*$ and $\partial \D$. 
    By design, $X^*$ is the unique tile of $\mathcal{T}_n$ containing $\crit_{-q_n}$ and, by Koebe control (e.g. properties (a)--(d)), its endpoints are of distance $\asymp |\crit_{-q_n}-\crit_0|$ away from $\partial \D$.
    Together with Theorem \ref{thm:size-external-pseudo-bubbles}, this implies that $W_1$ is bounded above by a universal constant.
    Hence, $W(\mathcal{G}_1) = O(1)$.

    Suppose $I_0$ is in $\mathcal{T}_m$ for some $m \geq -1$. 
    Let $I^l$ and $I^r$ denote the tiles in $\mathcal{T}_{m+1}$ that are directly on the left and right of $I_0$ respectively. 
    The union $I^l \cup I^r$ is contained in $I_{-1} \cup I_1$ and its interior is disjoint from $\partial \D$.
    Denote 
    \[
    \hat{\Uext}_0 := \C \backslash \left(\overline{\D} \cup \hat{\Bub}^0\right)
    \]
    and let $\hat{\Uext}^{q_n}_0$ be the unbounded connected component of $\fext^{-q_n} \left( \hat{\Uext}_0 \right)$. 
    Let $\mathcal{G}_2^{\new}$ denote the restriction of $\mathcal{G}_2$ in $\hat{\Uext}_0^{q_n}$ consisting of curves starting from $I_0$ and ending at $\partial \hat{\Bub}^{q_n} \cup \partial \Uext_0^{q_n}$. 
    By Lemma \ref{lem:univalent-pushforward}, there exists a subfamily $\mathcal{G}_2^{\text{New}}$ of $\mathcal{G}^{\new}$ with width
        \[
            W \left(\mathcal{G}_2^{\text{New}} \right) \geq W \left(\mathcal{G}_2^{\new}\right) - O(1)
        \]
    such that $\mathcal{G}_2^{\text{New}}$ can be univalently pushed forward by $\fext^{q_n}$. 
    Every curve in the image $\fext^{q_n}\left(\mathcal{G}_2^{\text{New}}\right)$ is a proper curve in $\hat{\Uext}_0$ starting from $\fext^{q_n}(I_0)$, a tile in $\mathcal{T}_m \left(\hat{\Bub}^0 \right)$, 
    and it protects either $\fext^{q_n} \left(I^l\right)$ or $\fext^{q_n} \left(I^r\right)$, both of which are next to $\fext^{q_n}(I_0)$ and contained in $\mathcal{T}_{m+1}\left(\hat{\Bub}^0\right)$. 
    We then conclude that 
    \[
    W \left( \fext^{q_n}(\mathcal{G}_2^{\text{New}})\right) = O(1)
    \]
    because of Lemma \ref{lem:main-bubble} and the uniformly bounded outer geometry of $\mathcal{T}_{m}\left(\hat{\Bub}^0\right)$, $m \geq -1$. Consequently, we have $W(\mathcal{G}_2) = O(1)$.    
    \vspace{0.05in}
     
    \noindent \underline{Case 3:} $I_0$ is an extraordinary tile disjoint from $X$. 
    \vspace{0.05in}
    
    There exists some $k \leq n-1$ such that $I_{-1}$ and $I_1$ are intervals in $\mathcal{T}_{n-k+1}$ and the complement of $I_0$ is the union of three intervals in $\mathcal{T}_{n-k} $. 
    In particular, we can express $J$ as a union of intervals $J_1, \ldots, J_t$ in $\mathcal{T}_{n-k+1} $ where $t$ is uniformly bounded by some universal positive constant. 
    It then suffices to show that the width of curves in $\widetilde{\mathcal{F}}^+(I_0)$ landing at $J_i$ is $O(1)$ for each $i$. 
    If $J_i$ is contained in $X$, this follows from the argument in Case 1. Otherwise, we can adapt the same argument presented in Case 2, and we are done.
\end{proof}

\begin{theorem}
\label{thm:uniform-quasiconformality}
    All pseudo-bubbles of $f$ and $\fext$ are uniform quasidisks.
\end{theorem}

\begin{proof}
    When $n \leq 2$, the map $\fext^{q_n}: \hat{\Bub}^{q_n} \to \hat{\Bub}^0$ has uniformly bounded distortion and so $\hat{\Bub}^{q_n}$ is a uniformly quasidisk.
    By applying the general criterion described in \cite[Lemma 11.3]{DL22}, Lemma \ref{lem:uniform-tiling} directly implies the uniform quasiconformality of principal pseudo-bubbles $\hat{\Bub}^{q_n}$ of $f_\ext$ for all $n$.
    By Corollary \ref{cor:uniform-qc-transfer}, the principal pseudo-bubbles $\hat{B}^{q_n}$ of $f$ are also uniform quasidisks.
    To see how we spread this property to all other pseudo-bubbles of $f$ of arbitrary generation, we can lift $\hat{B}^{q_n}$ with controlled distortion just like in the proof of Theorem \ref{thm:pseudo-bubble-bounds}. 
    The same can be done for pseudo-bubbles in external coordinates via Theorem \ref{thm:size-external-pseudo-bubbles}.
\end{proof}

This completes the proof of part (1) of Theorem \ref{main-thm-01}.

\appendix

\section{Extremal width}
\label{sec:appendix}

Given a family $\mathcal{G}$ of curves on a Riemann surface $S$, we denote by $W(\mathcal{G})$ the extremal width of $\mathcal{G}$. We list without proof a number of fundamental results on extremal width. (See \cite{A06} and the appendix in \cite{KL09} for details.)

\begin{proposition}[Parallel Law]
    \label{prop:parallel-law}
    For any two curve families $\mathcal{G}_1$ and $\mathcal{G}_2$,
    \[
    W(\mathcal{G}_1 \cup \mathcal{G}_2) \leq W(\mathcal{G}_1) + W(\mathcal{G}_2).
    \]
    Equality is achieved when $\mathcal{G}_1$ and $\mathcal{G}_2$ have disjoint support.
\end{proposition}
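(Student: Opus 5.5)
The plan is to work directly with the analytic definition of extremal width. For a curve family $\mathcal{G}$ on $S$, a Borel measurable conformal metric $\rho \geq 0$ is \emph{admissible} if $\int_\gamma \rho \geq 1$ for every (locally rectifiable) $\gamma \in \mathcal{G}$. The width is
\[
W(\mathcal{G}) = \inf_\rho A(\rho), \qquad A(\rho) := \int_S \rho^2,
\]
with the infimum over admissible $\rho$. This is the reciprocal of extremal length, and it is the normalization used throughout the paper. Both claims then reduce to manipulating admissible metrics.

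For the inequality, I fix $\varepsilon>0$ and choose admissible metrics $\rho_1$ for $\mathcal{G}_1$ and $\rho_2$ for $\mathcal{G}_2$ with $A(\rho_i) \leq W(\mathcal{G}_i)+\varepsilon$. I then set $\rho := \max\{\rho_1,\rho_2\}$, which is Borel. Any $\gamma \in \mathcal{G}_1\cup\mathcal{G}_2$ lies in some $\mathcal{G}_i$, so $\int_\gamma \rho \geq \int_\gamma \rho_i \geq 1$, and $\rho$ is admissible for the union. Pointwise $\rho^2 \leq \rho_1^2+\rho_2^2$, hence
\[
W(\mathcal{G}_1\cup\mathcal{G}_2) \leq A(\rho) \leq W(\mathcal{G}_1)+W(\mathcal{G}_2)+2\varepsilon.
\]
Letting $\varepsilon\to 0$ gives the inequality.

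For equality under disjoint supports, suppose the curves of $\mathcal{G}_i$ are contained in Borel sets $E_i$ with $E_1\cap E_2=\emptyset$. Take any $\rho$ admissible for $\mathcal{G}_1\cup\mathcal{G}_2$ and put $\rho_i := \rho\,\mathbf{1}_{E_i}$. Since every $\gamma\in\mathcal{G}_i$ lies in $E_i$, we have $\int_\gamma\rho_i=\int_\gamma\rho\geq 1$, so $\rho_i$ is admissible for $\mathcal{G}_i$. Disjointness of the $E_i$ gives
\[
A(\rho) \geq A(\rho_1)+A(\rho_2) \geq W(\mathcal{G}_1)+W(\mathcal{G}_2).
\]
Taking the infimum over $\rho$ yields the reverse inequality.

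The only delicate point is the meaning of ``disjoint support''. The set swept out by a curve family need not be Borel, so I interpret the hypothesis as the existence of disjoint Borel (for instance open) sets $E_1, E_2$ containing the curves of $\mathcal{G}_1$ and $\mathcal{G}_2$ respectively. This is the situation in every application in the paper, where the families live in disjoint open regions or are disjoint laminations. With that convention, the restriction step above is legitimate, and the rest of the argument is routine.
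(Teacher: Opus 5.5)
Your proof is correct. The paper states the Parallel Law without proof and refers to Ahlfors' book and the appendix of Kahn--Lyubich, where essentially this argument appears: subadditivity via the admissible metric $\max\{\rho_1,\rho_2\}$, and the reverse inequality by restricting an admissible metric to disjoint Borel sets. Your reading of ``disjoint support'' as containment in disjoint Borel sets is the standard convention, and it covers the rectangle-based laminations the paper actually uses.
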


We say that a curve family $\mathcal{G}$ \emph{overflows} another curve family $\mathcal{H}$, denoted by $\mathcal{H} < \mathcal{G}$, if every curve in $\mathcal{G}$ contains a curve in $\mathcal{H}$ (curves in $\mathcal{G}$ are longer and fewer). We also say that $\mathcal{H}$ is a \emph{restriction} of $\mathcal{G}$ if $\mathcal{G}$ overflows $\mathcal{H}$ but not any proper subfamily of $\mathcal{H}$ (curves in $\mathcal{G}$ are longer, but not more nor fewer).

Denote by $x \oplus y$ the harmonic sum $(x^{-1}+y^{-1})^{-1}$.

\begin{proposition}[Series Law]
    \label{prop:series-law}
    Suppose a curve family $\mathcal{G}$ overflows two disjoint curve families $\mathcal{G}_1$ and $\mathcal{G}_2$. Then,
    \[
    W(\mathcal{G}) \leq W(\mathcal{G}_1) \oplus W(\mathcal{G}_2).
    \]
\end{proposition}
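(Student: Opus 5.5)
The plan is to prove the Series Law directly from the definition of extremal width as the reciprocal of extremal length, i.e. $W(\mathcal{G}) = \inf_\rho L_\rho(\mathcal{G})^{-1}$ in the dual formulation, or equivalently $W(\mathcal{G}) = \inf_\rho \{A(\rho) : L_\rho(\gamma)\ge 1 \ \forall \gamma\in\mathcal{G}\}$ over admissible Borel metrics $\rho\ge 0$. I will use the second normalization. Let $\rho_1$ and $\rho_2$ be admissible metrics for $\mathcal{G}_1$ and $\mathcal{G}_2$ respectively, so that every curve of $\mathcal{G}_i$ has $\rho_i$-length at least $1$ and $A(\rho_i)$ is close to $W(\mathcal{G}_i)$. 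Since $\mathcal{G}_1$ and $\mathcal{G}_2$ are disjoint (their supports are disjoint), I may assume, after replacing $\rho_i$ by $\rho_i \cdot \mathbf{1}_{\operatorname{supp}\mathcal{G}_i}$, that $\rho_1$ and $\rho_2$ have disjoint supports. This truncation does not affect admissibility, because every curve of $\mathcal{G}_i$ lies in the support, and it can only decrease area.

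Next I take the combination $\rho = \lambda_1 \rho_1 + \lambda_2 \rho_2$ with $\lambda_1 + \lambda_2 = 1$ and $\lambda_i \ge 0$. Every $\gamma \in \mathcal{G}$ contains a subcurve $\gamma_1 \in \mathcal{G}_1$ and a subcurve $\gamma_2 \in \mathcal{G}_2$. These two subcurves lie in disjoint regions, so their $\rho$-lengths add, giving
\[
L_\rho(\gamma) \ge \lambda_1 L_{\rho_1}(\gamma_1) + \lambda_2 L_{\rho_2}(\gamma_2) \ge \lambda_1+\lambda_2 = 1.
\]
Hence $\rho$ is admissible for $\mathcal{G}$. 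Because the supports are disjoint, the area splits as $A(\rho) = \lambda_1^2 A(\rho_1) + \lambda_2^2 A(\rho_2)$, with no cross term.

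Finally I minimize $\lambda_1^2 a_1 + \lambda_2^2 a_2$ subject to $\lambda_1+\lambda_2=1$. The minimizer is $\lambda_i \propto a_i^{-1}$, and the minimum value is $(a_1^{-1}+a_2^{-1})^{-1} = a_1 \oplus a_2$. Letting $a_i \to W(\mathcal{G}_i)$ then gives $W(\mathcal{G}) \le W(\mathcal{G}_1)\oplus W(\mathcal{G}_2)$. If some $W(\mathcal{G}_i)$ is zero or infinite, the inequality follows by the same limiting argument, using the convention $x \oplus \infty = x$ and $x \oplus 0 = 0$.

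The main obstacle is the meaning of ``disjoint''. If it means only that no curve belongs to both families, the supports may overlap and the area would acquire a cross term $2\lambda_1\lambda_2\int\rho_1\rho_2$. I will therefore interpret ``disjoint'' as disjoint supports, which is the standard convention in \cite{KL09} and matches every application in the paper, where the restrictions $\mathcal{X}_{\mathbf{t}}$, $\mathcal{Y}_l$, $\mathcal{Y}_r$ and $\mathcal{Z}^\pm_r$ live in disjoint regions. I also need the subcurves $\gamma_1$ and $\gamma_2$ to be disjoint so that their lengths add; this is again guaranteed by the disjoint supports.
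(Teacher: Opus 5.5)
Your argument is correct and is the standard admissible-metric proof from the sources the paper cites for this fact. The paper lists the Series Law without proof and refers to \cite{A06} and \cite{KL09}. Your reading of ``disjoint'' as ``supported in disjoint regions'' is exactly the hypothesis needed: if the families were merely disjoint as sets of curves, the statement would be false. Two cosmetic points: your first formula should read $W(\mathcal{G})=\inf_\rho A(\rho)/L_\rho(\mathcal{G})^2$, and the bound $L_\rho(\gamma)\ge \lambda_1 L_{\rho_1}(\gamma_1)+\lambda_2 L_{\rho_2}(\gamma_2)$ follows from linearity of $\int_\gamma$ and $\rho_i\ge 0$ alone, so disjointness is used only to kill the cross term in $A(\rho)$, not to separate the subcurves.
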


The following elementary inequality allows us to convert a harmonic sum into the usual sum:
\[
    \bigoplus_{i=1}^n w_i \leq \frac{1}{n^2} \sum_{i=1}^n w_i.
\]

Extremal width is invariant under conformal maps. More generally, we have the following transformation rule.

\begin{proposition}
    \label{prop:transformation-law}
    Let $f: U\to V$ be a holomorphic map between two Riemann surfaces and $\mathcal{G}$ be a family of curves in $U$. Then,
    \[
    W(f(\mathcal{G})) \leq W(\mathcal{G}).
    \]
    If $f$ is at most $d$ to $1$, then
    \[
    W(\mathcal{G}) \leq d \cdot W(f(\mathcal{G})).
    \]
\end{proposition}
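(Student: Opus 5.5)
We argue directly from the definition of extremal width as a conformal-metric extremal problem:
\[
W(\mathcal{G})=\inf_{\rho}\operatorname{area}(\rho),
\]
where the infimum is over Borel measurable conformal metrics $\rho\,|dz|$ that are \emph{admissible} for $\mathcal{G}$, i.e.\ every (locally rectifiable) $\gamma\in\mathcal{G}$ satisfies $\int_\gamma\rho\,|dz|\ge 1$. This equals the reciprocal of extremal length. Both inequalities then come from transporting admissible metrics along $f$, in one direction by pushforward and in the other by pullback. Throughout, $f(\mathcal{G})$ denotes the family of image curves $f\circ\gamma$, with lengths counted with multiplicity. The critical points of $f$ form a discrete, hence measure-zero, set, and we ignore them in all area computations.

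\textbf{First inequality (pushforward).} Let $\rho$ be admissible for $\mathcal{G}$ on $U$. Define on $V$
\[
\rho_*(w):=\sup_{z\in f^{-1}(w)}\frac{\rho(z)}{|f'(z)|}.
\]
Away from the countable set of critical values, $V$ is covered by countably many disks on which $f^{-1}$ splits into at most countably many univalent branches. Hence $\rho_*$ is a countable supremum of Borel functions and is Borel. For $\gamma\in\mathcal{G}$ we have
\[
\int_{f\circ\gamma}\rho_*\,|dw|\ \ge\ \int_\gamma \frac{\rho(z)}{|f'(z)|}\,|f'(z)|\,|dz| \;=\; \int_\gamma\rho\,|dz|\ \ge 1,
\]
so $\rho_*$ is admissible for $f(\mathcal{G})$. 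For the area, bound the supremum by the sum of squares over the fiber and change variables branch by branch:
\[
\int_V\rho_*^2\,dA(w)\ \le\ \int_V\sum_{z\in f^{-1}(w)}\frac{\rho(z)^2}{|f'(z)|^2}\,dA(w)\;=\;\int_U\rho^2\,dA.
\]
Taking the infimum over $\rho$ gives $W(f(\mathcal{G}))\le W(\mathcal{G})$.

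\textbf{Second inequality (pullback).} Let $\rho$ be admissible for $f(\mathcal{G})$ on $V$ and set $f^*\rho:=\rho(f(z))\,|f'(z)|$ on $U$. For each $\gamma\in\mathcal{G}$, the $f^*\rho$-length of $\gamma$ equals the $\rho$-length of $f\circ\gamma$, which is at least $1$, so $f^*\rho$ is admissible for $\mathcal{G}$. Its area is
\[
\int_U\rho(f)^2|f'|^2\,dA\;=\;\int_V\rho^2(w)\,\#f^{-1}(w)\,dA(w)\ \le\ d\int_V\rho^2\,dA,
\]
since $f$ is at most $d$-to-$1$. Taking the infimum gives $W(\mathcal{G})\le d\,W(f(\mathcal{G}))$.

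\textbf{Main technical point.} The only delicate step is the measurability and change-of-variables bookkeeping for $\rho_*$ when $f$ is not a covering. This is handled by the countable decomposition into univalent branches described above, discarding the measure-zero set of critical points and critical values.
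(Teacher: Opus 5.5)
The paper states this proposition without proof, citing Ahlfors and the appendix of Kahn--Lyubich. Your argument is correct and is exactly the standard proof given there: push-forward $\rho_*=\sup_{f^{-1}(w)}\rho/|f'|$ for the first inequality, and pull-back $\rho(f)\,|f'|$ for the second.

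Two small repairs are worth making:
\begin{itemize}
\item The statement implicitly needs $f$ nonconstant. Otherwise $f(\mathcal{G})$ consists of constant curves, no metric is admissible, and the width is infinite.
\item Justify measurability of $\rho_*$ by covering $U$ minus the critical points with countably many open sets on which $f$ is injective. Do not use disks in $V$ over which $f^{-1}$ splits into univalent branches: such disks need not exist near asymptotic values, e.g.\ $f(z)=ze^z$ near $w=0$.
\end{itemize}
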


A \emph{(conformal) rectangle} $P$ on a surface $S$ is the image of a continuous map $\phi: [0,m]\times[0,1] \to \overline{S}$ that restricts to a conformal embedding in the interior. 
The vertical sides of a rectangle $P$ are $\phi(\{0\}\times [0,1])$ and $\phi(\{m\} \times [0,1])$, and the horizontal sides of $P$ are $\phi([0,m]\times \{0\})$ and $\phi([0,m]\times\{1\})$. 
A curve in $P$ is called \emph{vertical} if it connects the two horizontal sides of $P$. The \emph{vertical foliation} of $P$ is defined to be the collection of curves
\[
\mathcal{F}(P) := \{ \phi\left( \{t\} \times (0,1) \right) \: | \: t \in (0,m)\}.
\]
The \emph{width} of $P$ is 
\[
W(P) := W(\mathcal{F}(P))= m.
\]
We say that $P$ \emph{crosses} a curve $\gamma$ if every vertical curve in $P$ intersects $\gamma$.

\begin{proposition}[Non-Crossing Principle]
\label{prop:non-crossing-principle}
    If a pair of rectangles $P_1$ and $P_2$ on $S$ has width $W(P_1), W(P_2)> 1$, then they never cross, i.e., there exist disjoint leaves $\gamma_1 \in \mathcal{F}(P_1)$ and $\gamma_2 \in \mathcal{F}(P_2)$.
\end{proposition}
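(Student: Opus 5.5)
This is the Non-Crossing Principle, which the paper lists without proof. I would prove it by contradiction using the standard extremal-length comparison between two crossing rectangles.

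\emph{Setup.} Suppose every leaf of $\mathcal{F}(P_1)$ meets every leaf of $\mathcal{F}(P_2)$. Write $P_1$ as the image of $\phi_1:[0,m_1]\times[0,1]\to\overline S$, so $W(P_1)=m_1$. Its dual family consists of the curves joining the two vertical sides inside $P_1$. Every leaf of $\mathcal{F}(P_2)$ is a connected curve meeting all vertical leaves $\phi_1(\{t\}\times(0,1))$, $t\in(0,m_1)$. So its intersection with $P_1$, closed up, contains a curve that crosses $P_1$ "horizontally", i.e.\ joins the two vertical sides. Crossing only the interior leaves gives this up to limits; I would handle the closure by an $\epsilon$-shrinking argument, replacing $P_1$ by the subrectangle over $[\epsilon,m_1-\epsilon]$.

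\emph{Key inequality.} By the previous step, $\mathcal{F}(P_2)$ overflows the horizontal family $\mathcal{H}(P_1)$ of $P_1$. Monotonicity of extremal width under overflowing gives
\[
W(P_2)=W(\mathcal{F}(P_2))\le W(\mathcal{H}(P_1)).
\]
The right-hand side is the width of the horizontal foliation of the standard rectangle $[0,m_1]\times[0,1]$, which equals $1/m_1$. Here I use conformal invariance, together with the fact that restricting curves to a subdomain only shrinks their length, so the width cannot increase. Hence $W(P_1)\,W(P_2)\le 1$, which contradicts $W(P_1),W(P_2)>1$.

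\emph{Main obstacle.} The delicate point is the topological claim that a curve meeting every interior vertical leaf of $P_1$ contains a sub-arc joining the vertical sides within $\overline{P_1}$. A leaf of $P_2$ may enter and leave $P_1$ many times, and it may also meet $P_1$ through its horizontal sides. I would argue in the coordinates $\phi_1^{-1}$. Take the component structure of $\gamma\cap P_1$. If no single component meets both the region near $t=\epsilon$ and the region near $t=m_1-\epsilon$, then one finds a vertical leaf avoided by $\gamma$. To see this, use that the set of $t$-values hit by each component is an interval and there are countably many components, so a Baire/connectedness argument applies. This gives the contradiction.

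For rectangles where $\phi_1$ is only continuous on the boundary, the $\epsilon$-shrinking reduces everything to compact subrectangles. Letting $\epsilon\to0$ then yields $W(P_2)\le 1/(m_1-2\epsilon)\to 1/m_1$.
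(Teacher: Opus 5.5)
The paper lists this principle without proof (deferring to Ahlfors and Kahn--Lyubich). Your target inequality $W(P_1)\,W(P_2)\le 1$ is the right one, but the step you flag as the main obstacle is false, so the overflow $\mathcal{H}(P_1)<\mathcal{F}(P_2)$ cannot be established. The claim is that a curve meeting every interior vertical leaf of $P_1$ contains a sub-arc joining the two vertical sides inside $\overline{P_1}$.

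Here is a counterexample in $[0,m_1]\times[0,1]$. Take a curve that enters through the left side at height $\tfrac12$ and runs horizontally to $t=\tfrac34 m_1$. It then rises and exits through the top, and makes a detour outside the rectangle. It re-enters through the bottom at $t=\tfrac14 m_1$, rises to height $\tfrac14$, runs horizontally, and exits through the right side. The two components inside the rectangle project onto $(0,\tfrac34 m_1]$ and $[\tfrac14 m_1,m_1)$. Together these cover $(0,m_1)$, so every vertical leaf is hit. Yet no sub-arc of the curve inside $\overline{P_1}$ joins the left side to the right side. This is exactly where your Baire/connectedness step breaks: overlapping intervals can cover $(0,m_1)$ without any one of them reaching both ends. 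Shrinking to $[\epsilon,m_1-\epsilon]$ does not change this.

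The fix is to drop the overflow and run the length--area estimate directly. Let $\rho_1$ be the push-forward of $|dz|$ under $\phi_1$ on the open rectangle $\phi_1((0,m_1)\times(0,1))$, extended by $0$ elsewhere; its area is $m_1$. Take a leaf $\gamma$ of $P_2$ and read $\gamma\cap P_1$ in the coordinates $\phi_1^{-1}$. By the crossing assumption, the $t$-projections of its components cover $(0,m_1)$. The projection $(t,y)\mapsto t$ is $1$-Lipschitz, so $L_{\rho_1}(\gamma)$ is at least the total length of these projected intervals, hence at least $m_1$. Therefore the extremal length of $\mathcal{F}(P_2)$ is at least $m_1^2/m_1=m_1$, so $W(P_2)\le 1/m_1$, which gives the contradiction. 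This uses only that the projections cover $(0,m_1)$; no connectivity of $\gamma\cap P_1$ is needed.
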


Let $P$ be a rectangle and let $\phi: [0,m] \times [0,1] \to \overline{S}$ be the defining function as described above.
For $\varepsilon \in [0,m)$, an $\varepsilon$-\emph{buffer} of a rectangle $P$ is a sub-rectangle of $P$ of the form $\phi( [0,\varepsilon)\times (0,1)$ or $\phi( (m-\varepsilon,m] \times (0,1))$. 
A direct consequence of the non-crossing principle is the following proposition.

\begin{proposition}[{\cite[Lemma 2.14]{KL09}}]
\label{prop:nice-buffers}
    Every pair of rectangles $P_1$ and $P_2$ of width greater than $8$ admits sub-rectangles $P_1^{\new}$ and $P_2^{\new}$ obtained by removing some buffers of width $\leq 4$ such that $P_1^{\new}$ and $P_2^{\new}$ have disjoint vertical sides.
\end{proposition}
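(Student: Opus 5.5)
The statement is the Non-Crossing/buffer proposition of \cite[Lemma 2.14]{KL09}: two rectangles of width greater than $8$ can be trimmed by buffers of width at most $4$ so that the trimmed rectangles have disjoint vertical sides. I would derive it directly from Proposition~\ref{prop:non-crossing-principle} applied to suitable buffers.

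Write $P_i = \phi_i([0,m_i]\times[0,1])$ with $m_i>8$. Consider the four buffers $B_i^{\mathrm{left}} := \phi_i([0,4)\times(0,1))$ and $B_i^{\mathrm{right}} := \phi_i((m_i-4,m_i]\times(0,1))$. These are rectangles of width $4>1$, and the complementary middle parts $\phi_i([4,m_i-4]\times[0,1])$ still have width $>0$. The plan is to locate, inside the left buffer of $P_1$, a vertical leaf $\gamma_1^{\mathrm{left}}$, and inside the left buffer of $P_2$ a vertical leaf $\gamma_2^{\mathrm{left}}$, which are disjoint from each other and from the analogous right leaves; then $P_i^{\new}$ is the sub-rectangle of $P_i$ between its two chosen leaves. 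By construction each removed piece is contained in a buffer of width $\le 4$.

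The key step is to choose these leaves using Proposition~\ref{prop:non-crossing-principle} repeatedly. Split each width-$4$ buffer into two sub-buffers of width $2$ (an outer and an inner one). Applying non-crossing to an inner left sub-buffer of $P_1$ (width $2>1$) against each of the four width-$2$ sub-buffers of $P_2$ yields, for each pair, disjoint leaves; the difficulty is to get a single leaf of $P_1$ simultaneously disjoint from one chosen leaf in each side of $P_2$. I would handle this order-theoretically: leaves of a foliation are linearly ordered, and if a leaf $\gamma$ of $B_1$ is disjoint from a leaf $\delta$ of $B_2$, then since both are crosscuts of $S$ with endpoints on horizontal sides, the set of leaves of $B_1$ crossing $\delta$ forms an interval in the ordering. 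Hence the leaves of $B_1$ crossing a given leaf $\delta$ of $P_2$ form a sub-rectangle, which by non-crossing against a width $>1$ family has width at most $1$. Consequently, after fixing two leaves $\delta^{\mathrm{left}},\delta^{\mathrm{right}}$ of $P_2$ (one in each width-$2$ inner sub-buffer), the leaves of the width-$4$ left buffer of $P_1$ hitting either of them occupy width at most $2$, so some leaf $\gamma_1^{\mathrm{left}}$ survives; similarly for $\gamma_1^{\mathrm{right}}$. Symmetrically, I then need to re-choose $\delta$'s avoiding the $\gamma$'s, which is where the budget of $4$ (versus the needed $1+1$ plus slack) enters.

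The main obstacle is this simultaneous selection: naive alternating choices could loop. I would resolve it by fixing the $\gamma$'s first as above, with the $\delta$'s chosen arbitrarily in the inner halves, and then observing that the already-chosen $\delta$'s are disjoint from the $\gamma$'s by construction, so no re-choice is needed; the rectangle width bound $m_i>8$ guarantees the left and right buffers of each $P_i$ are disjoint, so the chosen leaves bound a genuine sub-rectangle. Justifying that ``leaves crossing $\delta$'' form a connected sub-rectangle (using that leaves are disjoint proper arcs, so the planar topology orders intersections) is the technical point I expect to need care with.
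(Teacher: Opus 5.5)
\textbf{There is a genuine gap.} Your key step is false: you claim that the leaves of a buffer $B_1$ of $P_1$ meeting one fixed leaf $\delta$ of $P_2$ form a sub-rectangle of width at most $1$ ``by non-crossing''. Non-crossing only concerns two families that both have width $>1$. The family $\{\delta\}$ has width $0$, and a single leaf can meet every leaf of $P_1$.

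Here is a counterexample. Take $P_1=[0,10]\times[0,1]$. Let $P_2$ be $[0,10]\times[2,3]$ with a very thin channel attached to its bottom side near $x=3$; the channel descends into $P_1$ and ends in a horizontal run along $y=1/2$ from $x=5$ to $x=-1$. Then:
\begin{itemize}
\item $W(P_2)$ is still close to $10$;
\item the leaf of $P_2$ issuing from the tip of the channel has parameter close to $3$, so it is an admissible ``arbitrary'' choice of $\delta^{\mathrm{left}}$ in the inner half of the left buffer;
\item this leaf meets every leaf of the left buffer of $P_1$, so no $\gamma_1^{\mathrm{left}}$ survives.
\end{itemize}
So choosing the $\delta$'s first, or arbitrarily, cannot work. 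Your auxiliary claim that the leaves met by $\delta$ form an interval is also unjustified in this generality. Leaves of a rectangle on $S$ need not be crosscuts of $S$, and $\delta$ may enter and leave $P_1$ through its horizontal sides many times. That claim becomes unnecessary once the main step is repaired. Note that the paper gives no proof of this statement; it only cites \cite[Lemma 2.14]{KL09}.

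\textbf{How to repair it.} What does hold is a bound on the \emph{measure} of leaves, coming from the length--area argument that underlies non-crossing. The bare existence of one disjoint pair is not enough for a simultaneous choice.

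Write $\gamma_t=\phi_1(\{t\}\times(0,1))$ and $\delta_s=\phi_2(\{s\}\times(0,1))$. Let $a_L(t)$ be the measure of the set of $s\in(0,4)$ with $\gamma_t\cap\delta_s\neq\emptyset$, and $a_R(t)$ the same for $s\in(m_2-4,m_2)$.

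Push forward the Euclidean metric of $(0,4)\times(0,1)$ by $\phi_2$. This metric has area $4$, and $\gamma_t$ has length at least $a_L(t)$ in it, because the trace of $\gamma_t$ in that buffer projects onto the corresponding set of $s$. A subfamily $\{\gamma_t\}_{t\in T}$ of leaves has width $|T|$. Hence $|\{t: a_L(t)\ge 2\}|\le 4/2^2=1$, and likewise for $a_R$.

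It follows that some $t_0\in[0,4]$ has $a_L(t_0)<2$ and $a_R(t_0)<2$, and similarly some $t_1\in[m_1-4,m_1]$. The set of $s\in(0,4)$ avoided by $\gamma_{t_0}$ has measure $>2$, and so does the set avoided by $\gamma_{t_1}$. Two such subsets of an interval of length $4$ must intersect, so choose $s_0$ in the intersection. Choose $s_1\in(m_2-4,m_2)$ in the same way.

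Then $\gamma_{t_0}$ and $\gamma_{t_1}$ are disjoint from $\delta_{s_0}$ and $\delta_{s_1}$, and the removed buffers have width $\le 4$; the condition $m_i>8$ keeps the left and right buffers disjoint. This is exactly where the constants $4$ and $8$ come from. The essential change from your plan is that the $\delta$'s are chosen \emph{after}, and depending on, the $\gamma$'s, using a measure estimate rather than a single-leaf estimate.
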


When $S$ has boundary, we say that a curve $\gamma : (0,1) \to S$ is \emph{proper} if it has well-defined endpoints $\gamma(0)$ and $\gamma(1)$ contained in $\partial S$. For any disjoint subsets $I$ and $J$ of $\partial S$, we denote by $\mathcal{F}_S (I,J)$ and $W_S(I,J)$ the family of proper curves in $S$ that connect $I$ and $J$, and its width respectively. 

When $S$ is the unit disk, the width $W_{\D} (I,J)$ can be estimated as follows. For any interval $I \subset \partial \D$, denote by $|I|$ the Euclidean length of $I$ normalized so that $| \partial \D| = 1$.

\begin{proposition}[Log-Rule, {\cite[Lemma 2.5]{DL22}}]
\label{prop:log-rule}
    Suppose $\partial \D$ is partitioned into four intervals $I_1, I_2, I_3, I_4$ labelled cyclically.
    \begin{enumerate}[label=\textnormal{(\arabic*)}]
        \item If $\min(|I_1|, |I_3|) \geq \min(|I_2|, |I_4|)$, then
        \[
        W_{\D}(I_1,I_3) \asymp \log \frac{\min \{|I_1|, |I_3|\}}{\min\{|I_2|, |I_4|\}} + 1;
        \]
        \item Otherwise,
        \[
        W_{\D}(I_1,I_3) \asymp \left( \log \frac{\min\{|I_2|, |I_4|\}}{\min\{|I_1|, |I_3|\}} + 1 \right)^{-1}.
        \]
    \end{enumerate}
\end{proposition}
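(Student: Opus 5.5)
The Log-Rule is a classical estimate for the extremal distance in a conformal quadrilateral. I would prove it by conformally normalizing to a model and then computing the model explicitly. Fix the four intervals $I_1,I_2,I_3,I_4$ cyclically on $\partial\D$. Let $\mathcal{Q}$ be the quadrilateral $(\D;I_1,I_2,I_3,I_4)$ with horizontal sides $I_1,I_3$. Then $W_\D(I_1,I_3)$ is the width of the conformal rectangle onto which $\mathcal{Q}$ is uniformized. Its reciprocal is the extremal length $W_\D(I_2,I_4)^{-1}=W_\D(I_1,I_3)$, by the duality $W(I_1,I_3)\cdot W(I_2,I_4)=1$. This duality shows that cases (1) and (2) are equivalent after swapping the pair $(I_1,I_3)$ with $(I_2,I_4)$. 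So it suffices to prove (1), where $\min(|I_1|,|I_3|)\ge\min(|I_2|,|I_4|)$.

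Assume (1), and by rotation and relabeling take $|I_2|\le|I_4|$, so $\min(|I_2|,|I_4|)=|I_2|=:\delta$. Set $a:=\min(|I_1|,|I_3|)\ge\delta$.

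\emph{Step 1: lower bound.} I would use a definite subfamily of curves. Let $x$ be the midpoint of $I_2$. The half-annuli $\{\delta\lesssim |z-x|\lesssim a\}\cap\D$ consist of arcs centered at $x$. For radii in $(\delta, a/2)$, roughly, each such arc joins $I_1$ to $I_3$, since both $I_1$ and $I_3$ have length at least $a$ and are adjacent to $I_2$. The width of this family of concentric arcs in the half-disk is $\frac{1}{\pi}\log\frac{a}{2\delta}$, up to constants. When $a\asymp\delta$, the extra $+1$ in the estimate is covered by a separate bound: $W\succeq 1$ follows by comparison with a Möbius-normalized configuration in which all four arcs have comparable harmonic measure, using continuity and compactness of cross-ratios in a bounded range.

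\emph{Step 2: upper bound.} I would use the dual family and the Series/Parallel Laws. Every curve from $I_1$ to $I_3$ either passes through the $\delta$-neighbourhood structure near $I_2$ or must separate $I_2$ from $I_4$ "the long way." The curves of the first kind cross the round annuli around $x$ with radii between $\delta$ and $a$. The curves of the second kind must travel distance $\succeq a$ between two sets of diameter $\succeq a$, and such curves contribute $O(1)$ width. Concretely, I would assign the metric $\rho=\frac{1}{|z-x|}\mathbf{1}_{\{\delta<|z-x|<a\}}$ plus $\frac1a\mathbf{1}_{\{|z-x|\ge a\}}$, normalized so that every curve of $\mathcal{F}_\D(I_1,I_3)$ has $\rho$-length $\ge c$. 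Its area is $\asymp \log(a/\delta)+1$, which gives $W\preceq\log(a/\delta)+1$.

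\emph{Main obstacle.} The delicate point is justifying the admissibility of $\rho$ in Step 2 when $I_4$ is small relative to $I_1$ or $I_3$, or when $|I_1|\ne|I_3|$ greatly differ. In that situation curves may escape around the far side of $\D$. To handle this, I would first apply a Möbius map of $\D$ that makes $|I_4|\asymp 1$ while keeping the cross-ratio fixed. The cross-ratio determines the modulus, and it is comparable to $\frac{|I_1||I_3|}{|I_2||I_4|}$ up to bounded factors in the regime of (1). After this normalization the only small scale is near $I_2$, and the metric argument applies directly.
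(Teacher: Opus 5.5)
The paper gives no proof of this statement; it quotes it from \cite[Lemma 2.5]{DL22}, so I am judging your argument on its own merits.

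What works: the reduction of (2) to (1) by duality is correct. The lower bound from the concentric arcs around the midpoint of $I_2$ is also correct. The extra bound $W\succeq 1$ is only sketched, but it follows by monotonicity: shrink $I_1,I_3$ to their subarcs of length $\delta$ adjacent to $I_2$.

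The gap is in Step~2: as written, your metric has neither the claimed area nor admissibility.
\begin{itemize}
\item The term $\frac1a$ on $\{|z-x|\ge a\}$ is spread over essentially all of $\D$. Its area is therefore $\asymp a^{-2}$, not $O(1)$; take for instance $|I_1|=\epsilon$, $|I_2|=\delta\le\epsilon$, $|I_3|\approx|I_4|\approx\frac12$.
\item When $|I_4|\ll a$, curves from $I_1$ to $I_3$ passing close to $I_4$ can have Euclidean length $\asymp|I_4|$. Their $\rho$-length is then $\asymp|I_4|/a\ll 1$.
\item Your claim that curves avoiding the neighbourhood of $I_2$ contribute $O(1)$ width is false. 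For $|I_2|=|I_4|=\delta$ and $|I_1|=|I_3|=\frac12-\delta$, they carry half of the total width $\approx\frac{2}{\pi}\log\frac1\delta$.
\end{itemize}

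The M\"obius normalization does not close this gap, because the right-hand side $\log\frac{\min(|I_1|,|I_3|)}{\min(|I_2|,|I_4|)}$ is not M\"obius invariant. After normalizing so that $I_1,I_3,I_4$ have definite size, the metric argument gives $W_{\D}(I_1,I_3)\asymp \log X+1$. Here $X=\frac{c_1c_3}{c_2c_4}$ is the chordal cross-ratio and $c_i$ is the chord of $I_i$. To finish, you still have to prove that in regime (1)
\[
\frac a\delta\preceq X\preceq \left(\frac a\delta\right)^2 .
\]
This also supplies the $X\succeq 1$ you need in order to normalize at all, and the exponent $2$ is attained for $|I_2|=|I_4|=\delta$, $|I_1|=|I_3|=\frac12-\delta$.

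Your stated substitute, that $X$ is comparable to $\frac{|I_1||I_3|}{|I_2||I_4|}$ in regime (1), is false. The configuration $|I_1|=|I_2|=|I_3|=\epsilon$, $|I_4|=1-3\epsilon$ satisfies (1) and has $X\to\frac13$, yet $\frac{|I_1||I_3|}{|I_2||I_4|}\asymp\epsilon$. Read with chords instead, the claim is a tautology and still does not help: $X$ differs from $a/\delta$ by the factor $\max(c_1,c_3)/c_4$, up to constants, and this factor can be $\asymp\delta^{-1}$. So the genuine case analysis is missing: split according to which arc, if any, exceeds $\frac12$, and use $c_i\asymp\min(|I_i|,1-|I_i|)$.

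A cleaner repair avoids normalization entirely.
\begin{itemize}
\item Assume $|I_1|=a\le|I_3|$. Since $|I_2|,|I_4|\ge\delta$, every curve of $\mathcal{F}_{\D}(I_1,I_3)$ joins $I_1$ to a point of $\partial\D$ at distance $\succeq\delta$ from $I_1$.
\item Take the metric equal to $\frac1a$ on the $a$-neighbourhood of $I_1$, plus $\frac{1}{|z-e|}$ on $\{c\delta<|z-e|<a\}$ around each endpoint $e$ of $I_1$.
\item This metric is admissible up to a universal factor and has area $\asymp\log\frac a\delta+1$.
\end{itemize}
In short, the metric must be concentrated at the shorter of $I_1,I_3$, not at $I_2$.
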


In Section \ref{sec:curve-families}, the curve families that appear in our analysis live in a punctured disk.

\begin{lemma}[Canonical rectangles]
\label{lem:canonical-lamination}
    Let $I$, $J_1, \ldots J_k$ be a finite collection of disjoint closed subintervals of $\partial \D$, and let $J = \cup_{i=1}^k J_i$.
    \begin{enumerate}[label=\textnormal{(\arabic*)}]
        \item Let $\mathcal{F}(I,J)$ be the family of all proper curves in the punctured unit disk $\D^*$ that start from $I$ and end at $J$. 
        Then, there exists a finite collection of pairwise disjoint proper rectangles $R_{I,J,1}, R_{I,J,2},\ldots, R_{I,J,l}$ of $\D^*$ such that their vertical leaves are contained in $\mathcal{F}(I,J)$ and
    \[
        \sum_{i=1}^l W(R_{I,J,i}) \geq W(\mathcal{F}(I,J)) - O(1).
    \]
        \item Let $\mathcal{F}(I)$ be the family of all proper curves $\gamma$ in $\D^*$ that have both endpoints in $I$ and are homotopically non-trivial, i.e. for any curve $\gamma'$ in $\mathcal{F}(I)$ that is homotopic to $\gamma$ rel endpoints, $\gamma' \cup I$ separates $0$ from $\infty$. 
        Then, there exists a proper conformal rectangle $R_I$ in $\D^*$ such that its vertical leaves are in $\mathcal{F}(I)$ and
    \[
        W(R_I) \geq W(\mathcal{F}(I)) - O(1).
    \]
    \end{enumerate} 
\end{lemma}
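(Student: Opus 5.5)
This is the standard "canonical lamination" reduction for a punctured disk; I would prove it by reducing to the disk case via the universal covering and the Log-Rule. Work in $\D^*$ and pass to the universal cover $\mathbb{H}\to\D^*$, $z\mapsto e^{2\pi i z}$ (suitably normalized). The arcs $I$ and $J_i$ lift to $\Z$-periodic families of disjoint intervals $\tilde I+m$, $\tilde J_i+m$ on $\R$. Every proper curve in $\D^*$ from $I$ to $J$ lifts to a proper curve in $\mathbb{H}$ from $\tilde I$ to some $\tilde J_i+m$. This gives a decomposition of $\mathcal{F}(I,J)$ into the homotopy classes $\mathcal{F}_{i,m}$ rel the boundary, and $W(\mathcal{F}(I,J))\le\sum_{i,m}W_{\mathbb{H}}(\tilde I,\tilde J_i+m)$ by the Parallel Law and Proposition~\ref{prop:transformation-law}.

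For (1), I would first show that only $O(1)$ of total width comes from classes other than a bounded number of ``dominant'' ones. By the Log-Rule, Proposition~\ref{prop:log-rule}, transported to $\mathbb{H}$, the width between $\tilde I$ and $\tilde J_i+m$ decays like $(\log(\text{distance ratio}))^{-1}$. A cleaner route is to note that on the circle $\partial\D$ all $k$ targets plus $I$ form finitely many gaps. Between two consecutive classes in the cyclic/linear order of lifts, the family of curves must pass through a quadrilateral whose width is bounded by the Log-Rule. Concretely, I would separate the classes with large width, which are wide rectangles. By the Non-Crossing Principle, Proposition~\ref{prop:non-crossing-principle}, two distinct homotopy classes of width $>1$ cannot both come from lifts forcing crossing. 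Hence the wide classes are realized by pairwise non-crossing families, and the remaining thin classes sum to $O(1)$. The summability here is the key estimate: the widths $W_{\mathbb{H}}(\tilde I,\tilde J_i+m)$ are summable in $m$ with a uniform bound independent of the data. I expect this step to be the main obstacle. To handle it, I would compare with the $\Z$-periodic annulus geometry, where far translates contribute geometrically decaying extremal width with universal constants.

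For each wide class, the extremal family in a Jordan quadrilateral (the lift restricted to a fundamental strip) is the vertical foliation of a conformal rectangle $R_{i,m}$ whose width equals the class width. Its projection to $\D^*$ is embedded because the class is simple (non-self-crossing after deck translation), again by non-crossing for widths $>1$. Applying Proposition~\ref{prop:nice-buffers}, I would remove buffers of width $\le 4$ from each so that the rectangles become pairwise disjoint. This costs $O(1)$ per class. The number of wide classes is bounded, since wide non-crossing classes from the single interval $I$ are linearly ordered and each uses a definite amount of modulus near the puncture. Summing gives $\sum W(R_{I,J,i})\ge W(\mathcal{F}(I,J))-O(1)$.

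Part (2) is identical with $J=I$. The homotopically nontrivial curves from $I$ to itself lift to curves from $\tilde I$ to $\tilde I+m$, $m\neq0$. Only $m=\pm1$ can carry more than $O(1)$ width, and by non-crossing only one of them can be wide, by orientation/planarity. Taking its extremal rectangle and trimming buffers yields $R_I$.
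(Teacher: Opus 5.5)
There is a genuine gap, and it sits exactly at the step you single out as the key estimate: the widths $W_{\mathbb{H}}(\tilde I,\tilde J_i+m)$ are \emph{not} summable in $m$. By the Log-Rule you yourself invoke, two intervals of bounded length at distance $\asymp|m|$ in $\mathbb{H}$ have width $\asymp 1/\log|m|$, so $\sum_m W_{\mathbb{H}}(\tilde I,\tilde J_i+m)=\infty$. Far translates do not decay geometrically.

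This is not an artifact of working upstairs. In log coordinates $w=x+iy$ on the half-cylinder $\mathbb{H}/\Z\cong\D^*$, take curves that climb to a height $h\in[|m|,2|m|]$, wind $|m|$ times there, and descend, spread out over $I$ and $J_i$. These already give $W(\mathcal{F}_{i,m})\gtrsim 1/|m|$, with constants depending on $|I|,|J_i|$. So the class widths in $\D^*$ diverge like the harmonic series, even though $W(\mathcal{F}(I,J))=W_{\D}(I,J)<\infty$ (a point is removable for extremal width). The non-simple classes all overlap near the puncture. Hence the Parallel Law applied class by class can never give an $O(1)$ error, and your claim that the thin classes sum to $O(1)$ is false.

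What is missing is a \emph{collective} bound on all non-simple classes. Normalize the lifts so that $\tilde I$ and all $\tilde J_i$ lie in one period $[a,a+1)$. Then every class other than the two simple ones for each $J_i$ consists of curves whose lifts have horizontal displacement at least $1$. In (2), the only exception is $m=\pm1$, and this is a single unoriented class, since reversing a curve turns class $m$ into class $-m$; so your reasoning that ``only one of them can be wide'' is off.

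The metric $\rho=\min(1,1/y)$ on $\mathbb{H}/\Z$ has area $2$. It gives every curve with displacement $\ge1$ a $\rho$-length $\ge1$: either the curve stays below height $1$ and travels horizontally at least $1$, or it climbs from height $0$ to height $1$. So the union of all non-simple classes has width $\le 2$.

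Only after this are you left with boundedly many classes. Even then, the projection of the extremal rectangle for $W_{\mathbb{H}}(\tilde I,\tilde J_i+m)$ is not embedded just because the class is simple. That rectangle fills all of $\mathbb{H}$, and restricting to a fundamental strip changes the family. You must remove the buffers that are linked with deck translates and with the other classes, using the Shift Argument and Non-Crossing. This buffer removal in the universal cover is what the paper's one-line proof, citing Kahn's construction, refers to.
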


The vertical foliation of the canonical rectangles in this lemma will be referred to as \emph{canonical lamination} associated with $\mathcal{F}(I,J)$ or $\mathcal{F}(I)$.

\begin{proof}
    This follows from lifting to the universal cover and obtain the canonical rectangles out of removing left and right $1$-buffers from to ensure disjointness. 
    Refer to Jeremy Kahn's construction in \cite{K06}.
\end{proof}

\begin{lemma}[Univalent push-forward]
\label{lem:univalent-pushforward}
    Consider a holomorphic self covering map $h :\D^* \to \D^*$ of the punctured unit disk. 
    Let $R$ be a proper rectangle in $\D^*$ and suppose that $h$ is injective on $\partial^{h,0} R$. 
    Then, there exists a subrectangle $R^{\new}$ of $R$ with width $\geq W(R) - 10$ such that $h$ is univalent on the support of $R^{\new}$.
\end{lemma}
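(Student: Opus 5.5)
The plan is to treat the push-forward locally: the covering $h$ is injective on $\partial^{h,0}R$, and we want to discard only boundedly many buffers so that $h$ becomes injective on the whole support of the remaining subrectangle. Since $h:\D^*\to\D^*$ is a holomorphic self covering of the punctured disk, it is conjugate (after identifying $\D^*$ with the quotient of the upper half-plane $\mathbb{H}$ by $z\mapsto z+1$) to $z\mapsto dz$ on $\mathbb{H}$ modulo translations, i.e. $h(w)=w^d$ up to rotation. So I would first lift $R$ to the universal cover $\mathbb{H}$ via a lift $\tilde R$. Then $h$ lifts to a conformal automorphism $\tilde h$ of $\mathbb{H}$, and $h$ fails to be injective on a set $S$ exactly when $S$ meets some deck translate $\tilde R+j/d$ with $j\not\equiv 0 \pmod d$ in the appropriate sense.

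The key observation is that non-injectivity of $h$ on the support of $R$ forces two points of $R$ that are identified by the deck group of $h$, i.e. $R$ and its image under a nontrivial deck transformation $\tau$ of $h$ (rotation by $2\pi j/d$) overlap. I would show that $R$ and $\tau(R)$ are both proper rectangles in $\D^*$ whose horizontal sides, by the hypothesis that $h$ is injective on $\partial^{h,0}R$, are disjoint from each other. Two proper rectangles with disjoint horizontal sides in a punctured disk either are disjoint after removing buffers, or one must cross the other; by the Non-Crossing Principle (Proposition~\ref{prop:non-crossing-principle}) and Proposition~\ref{prop:nice-buffers}, if both have width $>8$ we can remove buffers of width $\le 4$ to make their vertical sides disjoint, and then proper rectangles with disjoint horizontal and vertical boundaries in $\D^*$ are either nested or disjoint; nesting is excluded because $\tau$ has no fixed points and preserves width. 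The subtlety is that there are $d-1$ nontrivial deck transformations, and removing buffers for each separately would cost $O(d)$.

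The main obstacle is to get a bound independent of $d$. I would handle it by arguing in the cyclic order: the horizontal sides of $R,\tau R,\dots,\tau^{d-1}R$ are pairwise disjoint, cyclically ordered on $\partial\D$ (the rotations act on the boundary preserving order), so only the two neighbouring translates $\tau^{\pm1}R$ can block $R$ "first" in the homotopy sense: any overlap with $\tau^jR$ forces the offending leaves to also cross $\tau R$ or $\tau^{-1}R$, since a proper arc of $\D^*$ joining $I$ to a far-away interval separates the intervals between. Thus it suffices to remove a left buffer and a right buffer of width $\le 4$ each (plus a unit of slack for the puncture winding, since leaves can go around $0$), giving total loss $\le 10$ and a subrectangle $R^{\new}$ disjoint from all of its nontrivial deck translates, on whose support $h$ is therefore univalent. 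If $W(R)\le 10$ the statement is vacuous with $R^{\new}=\emptyset$.
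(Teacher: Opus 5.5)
Your lift to the universal cover and your reduction to deck translates match the paper, but the central step has a gap. You assert that the horizontal sides of $R$ and $\tau R$ are disjoint ``by the hypothesis that $h$ is injective on $\partial^{h,0}R$''. The hypothesis only gives $\partial^{h,0}R\cap\tau(\partial^{h,0}R)=\emptyset$. Nothing constrains $\partial^{h,1}R$. In the applications (Lemma~\ref{lem:local-width}, Proposition~\ref{prop:weak-improvement}, Theorem~\ref{thm:amplification}) $\partial^{h,1}R$ is the landing set on the far part of the boundary, where $h$ is far from injective.

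Work in the lift with $\partial^{h,0}\tilde R=[a,b]$ and $b-a\le 1/d$. A leaf from $x_0\in[a,b]$ to $x_1>x_0+1/d$ and its translate by $1/d$ have interlaced endpoints $x_0<x_0+1/d<x_1<x_1+1/d$, so they must intersect. Hence $R$ and $\tau R$ can genuinely cross along such leaves, and your ``disjoint boundaries, hence nested or disjoint'' step has no input.

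The missing ingredient is the first step of the paper's proof. Split $\tilde R$ into $\tilde S_{\mathrm{far}}$, the leaves landing to the right of $T_{1/d}(\partial^{h,0}\tilde R)$, and the remainder $\tilde S$. The rectangle $\tilde S_{\mathrm{far}}$ is linked with its $T_{1/d}$-image, so the Shift Argument bounds its width by $2$. Only for $\tilde S$ does one have $\partial^{h,0}\tilde S\cap T_{1/d}(\partial^{h,1}\tilde S)=\emptyset$. After that, removing left and right $4$-buffers gives $\tilde R^{\mathrm{new}}$ disjoint from $T_{1/d}(\tilde R^{\mathrm{new}})$. This is where the budget $2+4+4=10$ comes from; your ``unit of slack for the puncture winding'' does not correspond to any actual estimate.

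Your claim that only the neighbouring translates matter is true; the paper uses it tacitly, treating only $T_{1/d}$. However, the justification you give does not prove it. Once the far leaves are discarded, no leaf of $R$ joins $I$ to a far-away interval. An intersection of $R$ with $\tau^jR$, $j\neq\pm1$, could then only come from leaves wandering inside $\D^*$ while their feet stay unlinked, and a separation-of-endpoints argument cannot detect this.

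A correct argument runs as follows. Disjointness of $\tilde R^{\mathrm{new}}$ from $T_{1/d}(\tilde R^{\mathrm{new}})$ forces the outermost leaf of $\tilde R^{\mathrm{new}}$ to be unlinked from its translate. So the closed disk $\hat P$ bounded by this leaf and the real segment between its endpoints meets $\mathbb{R}$ in an interval of length $<1/d$, and $\hat P\cap(\hat P+1/d)=\emptyset$. Suppose $k\ge 2$ were minimal with $\hat P\cap(\hat P+k/d)\neq\emptyset$. Then $\hat P+1/d$ would be disjoint from the continuum $\hat P\cup(\hat P+k/d)$ and trapped in a bounded complementary component of it. On the boundary of that component $\mathrm{Im}\,z\le\max_{\hat P}\mathrm{Im}\,z$. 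By the maximum principle this contradicts the fact that $\hat P+1/d$ attains that maximal height. Alternatively, invoke Brouwer's lemma on free disks for the translation $T_{1/d}$. Either way this yields injectivity of $h$ on $\exp(\tilde R^{\mathrm{new}})$ with a loss independent of $d$.
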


\begin{proof}
    The map $h$ is necessarily of the form $z \mapsto e^{2\pi i \tau} z^d$ where $d$ is the degree of $h$ and $\tau$ is some real number.
    Consider the universal covering map $\exp: \UHP \to \D^*$, $\exp(z) = e^{2\pi i z}$.
    The deck group of $\exp$ is generated by unit translation $T_1(z) =z+1$ and the deck group of $h \circ \exp$ is generated by $T_{1/d}(z)= z+1/d$.
    Let $\tilde{R}$ be a lift of $R$ under $\exp$.
    From the hypothesis, 
    \begin{itemize}
        \item $\partial^{h,0} \tilde{R}$ is a real interval of Euclidean length at most $1/d$, and 
        \item the rectangles $\tilde{R}_j := T_1^j(\tilde{R})$, $j \in \Z$ have pairwise disjoint interior.
    \end{itemize}
    We will assume without loss of generality that 
    \[
    \partial^{h,0} \tilde{R}_j < \partial^{h,1} \tilde{R}_j < \partial^{h,0} \tilde{R}_{j+1}.
    \]

    Let us split $\tilde{R}$ into two sub-rectangles $\tilde{S}$ and $\tilde{S}_{\text{far}}$ where 
    \begin{itemize}
        \item vertical leaves of $\tilde{S}$ land at points on the left of the right endpoint of $T_{1/d}(\partial^{h,0} \tilde{R})$;
        \item vertical leaves of $\tilde{S}_{\text{far}}$ land at points on the right of $T_{1/d}(\partial^{h,0} \tilde{R})$.
    \end{itemize}
    The rectangle $\tilde{S}_{\text{far}}$ and its image under $T_{1/d}$ are linked, thus by \cite[Shift Argument]{DL22}, its width is bounded above by $2$. 
    Hence, 
    \[
        W(\tilde{S}) \geq W(\tilde{R}) -  2.
    \]
    Since $\partial^{h,0} \tilde{S}$ is disjoint from $T_{1/d}(\partial^{h,1} \tilde{S})$,
    then removing from $\tilde{S}$ left and right $4$-buffers results in a sub-rectangle $\tilde{R}_{\new}$ that is disjoint from $T_{1/d}(\tilde{R}_{\new})$.
    Therefore, the sub-rectangle $R_{\new} = \exp(\tilde{R}_{\new})$ of $R$ has width at least $W(R) - 10$ and it has the property that $h$ is injective on $R_{\new}$.
\end{proof}

In this paper, we will use the term \emph{lamination} to refer the set of pairwise disjoint arcs that is equal to the vertical leaves of a finite union of rectangles.
We say that a lamination is a \emph{proper} lamination in a surface with boundary if the endpoints of their leaves are on the ideal boundary.
Based on this definition, laminations are subject to the two lemmas above.


\bibliographystyle{alpha}
 
{\small \bibliography{bibliography}}

\end{document}